\documentclass[12pt]{article}

\usepackage[english]{babel}
\usepackage[cp1251]{inputenc}
\usepackage[T2A]{fontenc}

\usepackage{hyperref}

\usepackage{amssymb}
\usepackage{amsmath}
\usepackage{amsthm} 
\usepackage{amsfonts}
\usepackage{graphicx,geometry}
\usepackage[title]{appendix}
\usepackage{cite}
\usepackage{float}
\usepackage{xcolor}
\usepackage[font=small,labelfont=bf]{caption}

\usepackage{tikz}
\usetikzlibrary{arrows,decorations.pathmorphing,backgrounds,positioning,fit,petri,patterns}

\setlength{\topmargin}{-.5in}
\setlength{\textheight}{9in}
\setlength{\oddsidemargin}{.125in}
\setlength{\textwidth}{6.25in}

\renewcommand{\leq}{\leqslant}
\renewcommand{\geq}{\geqslant}

\DeclareFontEncoding{LS1}{}{}
\DeclareFontSubstitution{LS1}{stix2}{m}{n}

\frenchspacing

\newtheorem{theorem}{Theorem}
\newtheorem{lemma}{Lemma}
\newtheorem{proposition}{Proposition}
\newtheorem{corollary}{Corollary}

\theoremstyle{definition}
\newtheorem{definition}{Definition}
\newtheorem{remark}{Remark}

\newenvironment{enbibliography}{\vspace{-0.5cm}}

\usepackage[title]{appendix}

\begin{document} 
\title{Oil displacement by slug injection: a rigorous justification for the Jouguet principle heuristic}
\author{Sergey Matveenko\footnote{ Chebyshev Laboratory, St. Petersburg State University, 14th Line V.O., 29, Saint Petersburg 199178 Russia. E-mail: matveis239@gmail.com.}, Nikita Rastegaev\footnote{St. Petersburg Department of Steklov Mathematical Institute
of Russian Academy of Sciences, 27 Fontanka, 191023, St. Petersburg, Russia. E-mail: rastmusician@gmail.com.}}
\renewcommand{\today}{}
\maketitle
\abstract{
In this paper we discuss a one-dimensional model for two-phase Enhanced Oil Recovery (EOR) floods, primarily for the polymer flood. We improve upon the method for the construction of semi-analytical solutions for the oil displacement by a water slug containing dissolved chemicals given in \cite{PiBeSh06} and later generalized in \cite{Pires2020, Pires2021}. This method utilizes a transformation into the Lagrange coordinates that splits the equations and allows one to solve the chromatographic one-phase problem separately. The solution is then substituted into a scalar hyperbolic conservation law, which is solved using the method of characteristics. However, there is often a gap in the characteristics near the chemical shock front. It was posited to the authors that the Jouguet principle could be used to close that gap. However, no rigorous justification was given for this approach, and as such it remained a heuristic. We analyze the conditions for the appearance of the gap and its properties, and give a proper argumentation for the Jouguet heuristic and its applicability based on the Kru\v{z}kov-type uniqueness theorem for the conservation law system. Additionally, a second splitting technique within the Lagrange coordinates is developed that simplifies this analysis and the construction of characteristics.

Keywords: Enhanced oil recovery, Polymer flooding, Slug injection, Conservation laws, Hyperbolic systems of partial differential equations 
}

\section{Introduction}
The mathematical models for the Enhanced Oil Recovery (EOR) methods are often expressed as a system of hyperbolic conservation laws \cite{Gelfand, Dafermos}.
We study the conservation law system
\begin{equation}\label{eq:main_system_chem_flood}
\begin{cases} 
s_t + f(s, c)_x = 0, \\
(cs + a(c))_t + (cf(s,c))_x  = 0.
\end{cases}
\end{equation}
This one-dimensional two-phase three-component system is often used to describe the chemical flood of the oil reservoir. Here $(x,t)\in\mathbb{R}_+^2$, $s$ is the saturation of the water phase, $c$ is the concentration of the chemical agent dissolved in water, $f$ denotes the fractional flow function, usually S-shaped after Buckley--Leverett \cite{BL}, and $a$ describes the adsorption of the chemical agent on the rock, usually concave like the classical Langmuir curve (see Fig.~\ref{fig:BL_ads}).

We study the solutions of the slug injection initial-boundary value problem

\begin{align}
\label{eq:Initial_boundary_problem}
\begin{split}
s(x,0) &= s_{init}, \quad x > 0,\\
c(x,0) &= 0, \quad x > 0,\\
s(0,t) &= 1, \quad t \geqslant 0, \\
c(0,t) &= \begin{cases}
    c_{inj}, & 0 \leqslant t \leqslant t_{inj}, \\
    0, & t > t_{inj},
\end{cases}
\end{split}
\end{align}
and in particular the viability of the Jouguet principle in determining the unique solution. The techniques we describe and derive in this paper will work for all values of $s_{init}$ and $c_{inj}$, but for most proofs and examples we will assume $s_{init} = 0$, $c_{inj} = 1$. The case $s_{init} \neq 0$ has some differences in the area geometries and solution details, but nothing that changes the Jouguet principle analysis.

Note that the problem describing constant injection, for example
\begin{align*}
&s(x, 0) = c(x, 0) = 0, \quad x\geqslant 0, \\
&s(0, t) = c(0, t) = 1, \quad t\geqslant 0,
\end{align*}
is equivalent to the Riemann problem
\begin{equation*}
    (s,c)(x,0)=
    \begin{cases}
        (1,1),& \text{if } x\leq 0,\\
        (0,0),& \text{if } x>0.
    \end{cases}
\end{equation*}
The Riemann problem for the system \eqref{eq:main_system_chem_flood} was studied in \cite{JnW} and solutions for it are known for the class of fractional flow functions considered in this paper. The uniqueness of vanishing viscosity solutions for it was also considered in a certain context in \cite{Shen}. The slug injection problem was considered in \cite{PiBeSh06}. The Lagrange coordinate transformation was described in that paper to split the equations. This allows one to solve the chromatographic one-phase problem separately. The one-phase chromatographic problem is a fully solved problem for one chemical component, but even when considering multi-component floods, some partial cases (e.g.~problems with Langmuir adsorption) have known solutions (see \cite{RheeAmundson}). The solution is then substituted into a scalar hyperbolic conservation law describing phase movements, which is partially solved using the method of characteristics. If the solution of the chromatographic problem was continuous, this uplifting process would have been covered by the well-known Kru\v{z}kov's theorem \cite{Kruzhkov}, and the existence and uniqueness of the solution would be guarantied. However, the problems with shocks in the chemical concentrations are not covered by the same classical results. Moreover, there is usually a gap in the characteristics near the chemical shock front, that cannot be covered with just the characteristics constructed from the initial-boundary data. Additional assumptions must be made on some parts of such shocks for the method of characteristics to deliver a full solution. It was posited to the authors that the Jouguet principle could be used to close that gap, postulating that the characteristics incline must coincide with the shock speed, thus adding more initial data for characteristics in the gaps. However, no rigorous justification was given for this approach to date, and as such it remained a heuristic.

In the previous work \cite{MR2024} we used the Lagrange coordinate transformation described in \cite{PiBeSh06} to prove a uniqueness theorem similar to the classical Kru\v{z}kov's theorem \cite{Kruzhkov} for a general class of initial-boundary conditions, including problems involving, for example, more than one slug of a chemical agent or tapering \cite{Tapering}. With this theorem it is now feasible to justify the Jouguet principle when it is necessary for the solution construction, as well as study the conditions that create the gaps in the characteristics and predict the boundaries of the gaps a priori. Additionally, we developed a second splitting technique within the Lagrange coordinates that simplifies this analysis and the construction of characteristics, making it possible to only solve single equations instead of systems when constructing semi-analytical solutions algorithmically.

The paper has the following structure. Sect.~\ref{sec:restrictions} lists all restrictions we place on the parameters of the problem, i.e. on the flow function $f$ and on the adsorption function $a$. Sect.~\ref{sec:admissibility} defines the class of admissible solutions, describes the travelling wave dynamic system for the dissipative system, which provides the set of admissible shocks, and formulates the uniqueness theorem. Sect.~\ref{sec:Lagrange} briefly describes the Lagrange coordinate transformation. Sect.~\ref{sec:entropy} describes the mapping of shocks into the Lagrange coordinates and lists the admissibility criteria for the shocks. Sect.~\ref{sec:c-sol} describes the solution for the split equation on chemical concentration. Sect.~\ref{sec:solution-U} describes the solution for the other equation using the characteristics method. Here we introduce the new splitting technique for the characteristics equation system. We formulate the condition for the Jouguet principle applicability and use it to construct the solution and prove its uniqueness. Finally, Sect.~\ref{sec:Examples} contains figures demonstrating examples of solutions constructed using the presented method.

\section{Restrictions on problem parameters}
\label{sec:restrictions}

In this section we list the restrictions we put on the functions that are the parameters of our model, i.e. on functions $f$ (the fractional flow function) and $a$ (the adsorption curve). Before any explicit restrictions on fractional flow and adsorption, the model formulation \eqref{eq:main_system_chem_flood} inherently makes the following assumptions on the physics of the described process:
\begin{itemize}
    \item The flow takes place in a homogeneous porous media;
    \item The fluids are incompressible;
    \item Gravity effects are ignored;
    \item Second order effects, such as diffusion, dispersion and capillarity,  are small enough as to be negligible;
    \item The chemical component is dissolved only in the water phase;
    \item Water density does not change with chemical concentration.
\end{itemize}
These assumptions are reasonable for the description of an advection dominated horizontal flow.

\subsection{Restrictions on the flow function}
The following assumptions (F1)--(F4) for the fractional flow function $f$ are considered (see Fig.~\ref{fig:BL_ads}a for an example of function~$f$). 
\begin{enumerate}
    \item[(F1)] $f\in \mathcal C^2([0,1]^2)$; $f(0, c)=0$, $f(1, c)= 1$ for all $c\in[0,1]$;
    \item[(F2)] $f_s(s, c)>0$ for $0<s<1$, $0 \leq c \leq 1$;  $f_s(0,c)=f_s(1,c)=0$  for all $c\in[0,1]$;
    \item[(F3)] $f$ is $S$-shaped in $s$: for each $c \in [0,1]$ function $f(\cdot,c)$ has a unique point of inflection $s^I =  s^I(c) \in (0, 1)$, such that 
    \begin{itemize}
        \item[-] $f_{ss}(s, c)>0$ for $0<s<s^I(c)$,
        \item[-] $f_{ss}(s, c)<0$ for $s^I(c)<s<1$; 
    \end{itemize}
    
    \item[(F4)] $f$ is decreasing in $c$, i.e. $f_c(s,c)<0$ for all $s, c \in (0,1)$.

\end{enumerate}

\subsection{Restrictions on the adsorption function}
The adsorption function $a=a(c)$ satisfies the following assumptions (see Fig.~\ref{fig:BL_ads}b for an example of function~$a$):
\begin{itemize}
    \item[(A1)] $a \in \mathcal C^2([0,1])$, $a(0) = 0$;
    \item[(A2)] $a_c(c)>0$ for $0\leq c\leq1$;
    \item[(A3)] $a_{cc}(c)<0$ for $0\leq c\leq 1$.
\end{itemize}

\begin{figure}[htbp]
    \centering
    \includegraphics[width=0.4\textwidth]{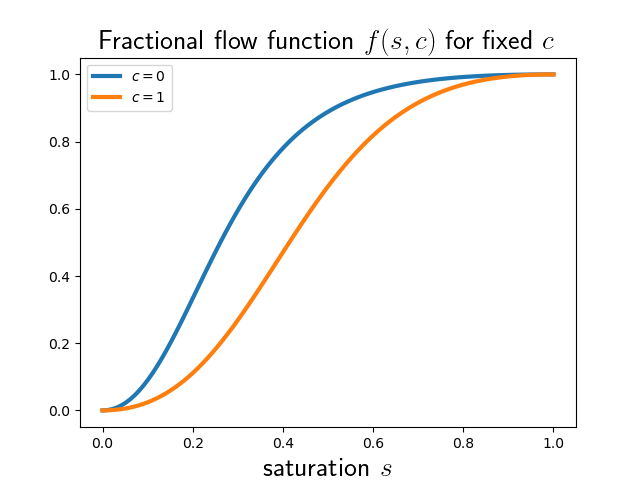}
    \includegraphics[width=0.4\textwidth]{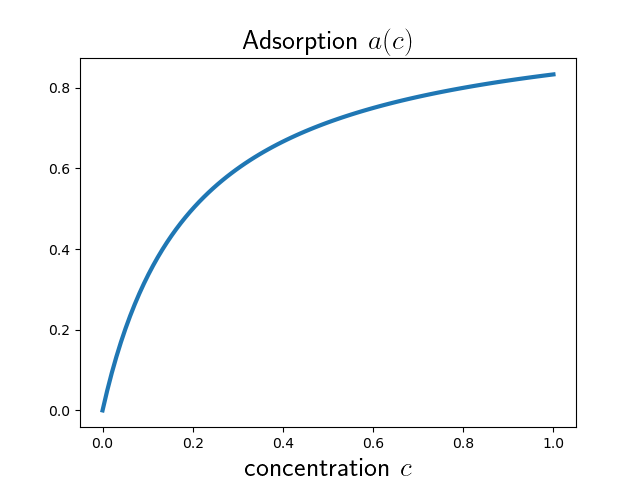}\\
    (a)\qquad\hfil\qquad  
    (b)\hfil
    \caption{Examples of (a) flow function $f(s,c)$;
    (b) adsorption function $a$.}
    \label{fig:BL_ads}
\end{figure}

\section{Admissible solutions of chemical flood system}
\label{sec:admissibility}

\subsection{Admissible weak solutions}
In order to use the uniqueness theorem proved in \cite{MR2024} we need to restrict the class of weak solutions with the following assumptions.

\begin{definition}\label{def:solution}
We call $(s, c)$ a piece-wise $\mathcal C^1$-smooth weak solution of \eqref{eq:main_system_chem_flood} with vanishing viscosity admissible shocks and locally bounded ``variation'' of $c$, if:
\begin{itemize}
    \item[(W1)] Functions $s$ and $c$ are continuous and piecewise continuously differentiable everywhere, except for a locally finite number of $\mathcal C^1$-smooth curves, where one or both of them have a jump discontinuity.
    \item[(W2)] For any compact $K$ away from the axes, the integral $\int_{t_1}^{t_2}|c_x(x,t)|\, dt < C_K$ is uniformly bounded for all $(x,t_1), (x,t_2)\in K$.
    \item[(W3)] Functions $s$ and $c$ satisfy \eqref{eq:main_system_chem_flood} in a classical sense inside the areas, where they are continuously differentiable.
    \item[(W4)] On every jump discontinuity curve $\Gamma$ given by $\gamma(t)$ at any point $(\gamma(t_0), t_0)$ the jump of $s$ and $c$ 
    \[
    s^\pm = s(\gamma(t_0)\pm 0, t), \quad c^\pm = c(\gamma(t_0)\pm 0, t)
    \]
    with velocity $v = \gamma_t(t_0)$ could be obtained as a limit as $\varepsilon\to 0$ of travelling wave solutions
    \[
    s(x,t) = \mathbf{s}\Big(\frac{x - vt}{\varepsilon}\Big), \quad c(x,t) = \mathbf{c}\Big(\frac{x - vt}{\varepsilon}\Big)  
    \]
    of the dissipative system
\begin{equation}\label{eq:main_system_dissipative}
\begin{cases} 
s_t + f(s, c)_x = \varepsilon s_{xx}, \\
(cs + a(c))_t + (cf(s,c))_x  = \varepsilon (c s_x)_x + \varepsilon c_{xx},
\end{cases}
\end{equation}
with boundary conditions
\[
\mathbf{s}(\pm\infty) = s^\pm, \quad \mathbf{c}(\pm\infty)  = c^\pm.
\]
\end{itemize}
\end{definition}

\begin{remark}
Note that the condition (W1) here is weaker than the condition (W1) in \cite{MR2024}. It allows $s$ and $c$ to have discontinuities in the derivative. Careful examination of the proofs in \cite{MR2024} shows that this changes nothing and the uniqueness theorem still holds in this wider class of solutions.
\end{remark}
\begin{remark}
Note also that the right-hand side in \eqref{eq:main_system_dissipative} represents the addition of small-to-negligible terms describing second-order effects, such as diffusion, dispersion and capillarity. However, as \cite[Section 3.1]{MR2024} asserts, under the restrictions (F1)--(F4) and (A1)--(A3) any reasonable change in the exact form of the right-hand side terms doesn't affect the set of admissible shocks the restriction (W4) describes. We could even add consideration for dynamic adsorption like in \cite[(3)]{Bahetal}, and the admissible shocks will stay the same. Therefore, we chose these terms for their simplicity and mathematical convenience, as they provide the clearest form of the travelling wave dynamic system \eqref{eq:dyn_sys_cap_diff} below.
\end{remark}

In this paper we only consider piece-wise $\mathcal C^1$-smooth weak solutions with vanishing viscosity admissible shocks and locally bounded ``variation'' of $c$, therefore, from now on we will just call them \emph{W-solutions} for brevity.

\subsection{Travelling wave dynamic system}
\label{sec:sec2-Hopf}
The assumption (W4) for the solution is that shocks are admissible if and only if they could be obtained as a limit of travelling wave solutions for a system with additional dissipative terms as these terms tend to zero. In this section we analyze such travelling wave solutions and derive a dynamic system that describes them.

Consider a shock between states $(s^-, c^-)$ and $(s^+, c^+)$ moving with velocity $v$. In order to check if it is admissible, we are looking for a travelling wave solution 
\[
s(x,t) = s\Big(\frac{x - vt}{\varepsilon}\Big), \quad c(x,t) = c\Big(\frac{x - vt}{\varepsilon}\Big)
\]
for the dissipative system \eqref{eq:main_system_dissipative}
satisfying the boundary conditions 
\[
s(\pm\infty) = s^\pm, \quad c(\pm\infty)  = c^\pm.
\]
Substituting this travelling wave ansatz into the system \eqref{eq:main_system_dissipative} and denoting $\xi = \frac{x - vt}{\varepsilon}$, we get the system
\begin{equation*}
\begin{cases} 
-v s_\xi + f(s, c)_\xi = s_{\xi\xi}, \\
-v (cs + a(c))_\xi + (cf(s,c))_\xi  = (c s_\xi)_\xi + c_{\xi\xi}.
\end{cases}
\end{equation*}
Integrating the equations over $\xi$ we arrive at the travelling wave dynamic system
\begin{equation}\label{eq:dyn_sys_cap_diff}
\begin{cases} 
s_\xi = f(s, c) - v (s + d_1), \\
c_\xi = v (d_1 c - d_2 - a(c)).
\end{cases}
\end{equation}
The values of $d_1$ and $d_2$ are obtained from the boundary conditions:
\begin{align*}
    vd_1 & = -vs^\pm + f(s^\pm, c^\pm), \\
    vd_2 & = v d_1 c^\pm - v a(c^\pm),
\end{align*}
namely, in the case when $c^+ \neq c^-$,
\begin{equation*}
d_1 = \dfrac{a(c^-) - a(c^+)}{c^- - c^+}, \quad d_2 =  \dfrac{c^+ a(c^-) - c^- a(c^+)}{c^- - c^+}.
\end{equation*}
Additionally, the same boundary conditions yield us the Rankine--Hugoniot conditions
\begin{equation}
\label{eq:RH-1}
\begin{split}
    v[s]&=[f(s,c)],
    \\
    v[cs+a(c)]&=[cf(s,c)],
\end{split}
\end{equation}
where $[q(s,c)]=q(s^+,c^+)-q(s^-,c^-)$\footnote{Note the order of ``$+$'' and ``$-$'' terms in this definition. It could be different in different sources. We follow certain proof schemes of \cite{Serre1}, so our order coincides with their.}. Thus, for every set of shock parameters $(s^{\pm}, c^{\pm})$ and $v$ satisfying \eqref{eq:RH-1}, we can construct a phase portrait for the dynamic system \eqref{eq:dyn_sys_cap_diff}. The points $(s^\pm, c^\pm)$ are critical for this dynamic system due to \eqref{eq:RH-1}, and we can check if there is a trajectory connecting the corresponding critical points. But even just analyzing the geometric meaning of the Rankine--Hugoniot conditions \eqref{eq:RH-1}, we derive a lot of restrictions on admissible shock parameters.

\begin{proposition}[Proposition 3.2 \cite{MR2024}]\label{prop:inadmissible_shocks}
The following restrictions on admissibility are evident from the properties (F1)--(F4), (A1)--(A3), the Rankine--Hugoniot conditions \eqref{eq:RH-1} and the analysis of the sign of the right-hand side of \eqref{eq:dyn_sys_cap_diff}:
\begin{itemize}
    \item Admissible shock velocity $v$ is bounded and strictly positive: $0 < v < \|f\|_{\mathcal C^1}$.
    \item Shocks with $s^- = 0$ cannot be admissible.
    \item Shocks with $s^+ = s^-$ cannot be admissible.
    \item Shocks with $c^+ > c^-$ cannot be admissible.
    \item If $s^+ = 0$ then $c^+=c^-$.
\end{itemize}
\end{proposition}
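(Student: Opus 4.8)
The plan is to read every restriction off the decoupled structure of the travelling-wave system \eqref{eq:dyn_sys_cap_diff}: since its second equation depends on $c$ alone, the $c$-component of any heteroclinic orbit evolves autonomously, while the first equation forces both endpoints to satisfy the equilibrium relations $v(s^\pm + d_1) = f(s^\pm, c^\pm)$ (these are just $s_\xi = 0$, equivalent to \eqref{eq:RH-1}). First I would record two preliminary facts. From (A1)--(A3) the function $a$ is strictly increasing, so $d_1 = (a(c^-)-a(c^+))/(c^--c^+) > 0$ whenever $c^+ \ne c^-$; and writing $g(c) = d_1 c - d_2 - a(c)$ one has $g'' = -a_{cc} > 0$, so $g$ is strictly convex and vanishes precisely at $c^\pm$, hence $g<0$ strictly between $c^+$ and $c^-$ and $g>0$ outside.

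For the velocity bounds I would argue as follows. Nonnegativity: if $c^+\ne c^-$ then $d_1>0$, so $s^\pm + d_1 > 0$ and $v = f(s^\pm,c^\pm)/(s^\pm+d_1) \ge 0$ because $f \ge 0$ on $[0,1]^2$; if $c^+=c^-$ then $v = [f]/[s] = f_s(\bar s,\cdot)\ge 0$ by (F2) and the mean value theorem. To exclude $v=0$, observe that at $v=0$ the system degenerates to $s_\xi = f(s,c)$ with $c$ constant, whose only equilibria in $[0,1]$ occur at $s=0$ (as $f>0$ for $s\in(0,1]$); thus no orbit can connect two distinct states and $v=0$ yields no genuine shock, so $v>0$. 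The strict upper bound I would obtain only after Claim~4 below: using $c^-\ge c^+$ together with $f_c<0$ (F4) to sandwich $v(s^+-s^-)=f(s^+,c^+)-f(s^-,c^-)$ between two $s$-differences of $f$ taken at a \emph{fixed} value of $c$, each equal to $f_s(\cdot)\,(s^+-s^-)$ by the mean value theorem; this gives $0<v\le\|f_s\|_\infty<\|f\|_{\mathcal C^1}$.

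The sign restriction on $c$ (Claim~4) is the cleanest: because the $c$-equation is autonomous, a heteroclinic from $c^-$ at $\xi=-\infty$ to $c^+$ at $\xi=+\infty$ forces $c$ to be monotone, moving in the direction determined by the sign of $v\,g(c)$ strictly between the endpoints. With $v>0$ and $g<0$ there, $c$ is strictly decreasing, so $c^->c^+$; hence $c^+>c^-$ admits no connecting trajectory. For the two statements about $s$ I would combine the endpoint relation with a local phase-plane analysis near $s=0$. Setting $s^+=0$ in $v(s^++d_1)=f(0,c^+)=0$ gives $vd_1=0$; since $v>0$ this forces $d_1=0$, i.e.\ $c^+=c^-$ --- this is Claim~5. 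The same computation with $s^-=0$ again forces $c^+=c^-=:c_0$ and reduces the profile to $c\equiv c_0$, $s_\xi = f(s,c_0)-vs$. Here is the decisive point: by (F2) $f_s(0,c_0)=0$, so near $s=0$ the right-hand side is $-vs+\tfrac12 f_{ss}(0,c_0)s^2+o(s^2)$ with linear part $-v<0$, making $s=0$ an \emph{attracting} equilibrium of the $\xi$-flow. An attracting equilibrium can be the $\xi\to+\infty$ limit (a right state $s^+$) but never the $\xi\to-\infty$ limit; therefore no admissible orbit emanates from $s=0$, which is exactly the inadmissibility of $s^-=0$ (Claim~2). Finally, for Claim~3, $s^+=s^-$ together with \eqref{eq:RH-1} gives $[f]=0$, i.e.\ $f(s^+,c^+)=f(s^+,c^-)$; by the strict $c$-monotonicity of $f$ from (F4) this forces $c^+=c^-$ for any interior value $s^+\in(0,1)$, so there is no genuine jump, while the degenerate endpoint values $s^+\in\{0,1\}$ reduce to the boundary cases already analysed.

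I expect the phase-plane analysis near $s=0$ to be the main obstacle: everything else is either algebra on \eqref{eq:RH-1} or the monotonicity of a scalar autonomous ODE, but the asymmetry between $s^-=0$ and $s^+=0$ is genuinely a stability computation, and it hinges on the degeneracy $f_s(0,c)=0$ together with the correct sign $f_{ss}(0,c)>0$ supplied by (F2)--(F3). Care is also needed with the degenerate boundary configurations (notably $s^+=s^-=1$ and the coincidence $c^+=c^-$), where the endpoint relations alone do not decide admissibility and one must inspect the invariant lines $s=0$ and $s=1$ of the phase portrait directly.
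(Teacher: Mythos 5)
Your overall strategy --- Rankine--Hugoniot algebra plus sign analysis of the scalar right-hand sides of \eqref{eq:dyn_sys_cap_diff} --- is exactly the route this paper indicates (the proposition is imported from \cite{MR2024} and not re-proved here), and your treatment of $v>0$, of the exclusion of $c^+>c^-$, of $s^-=0$ (via the attracting character of $s=0$) and of $s^+=0$ is sound. But two steps fail as written. First, your ``sandwich'' for the upper velocity bound is not a sandwich: decomposing $f(s^+,c^+)-f(s^-,c^-)$ into an $s$-difference at frozen $c$ plus a $c$-difference at frozen $s$, the $c$-difference is \emph{nonnegative} in both possible decompositions (since $c^+\le c^-$ and $f_c<0$), so both give $v(s^+-s^-)\ge f_s(\cdot)\,(s^+-s^-)$, i.e.\ bounds from the same side. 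When $s^+<s^-$ this indeed yields $v\le\|f_s\|_\infty$, but admissible $c$-shocks with $s^+>s^-$ exist (e.g.\ the $u_2^-\to u_2^+$ connections of Proposition~\ref{prop:c-shock-admissibility}: since $f(\cdot,c^+)>f(\cdot,c^-)$, the point $s_2^-$ lies where the curve $f(\cdot,c^+)$ is above the chord $v(s+d_1)$, hence $s_2^+>s_2^-$), and for those your inequalities are lower bounds on $v$, not upper bounds. The fix stays inside your framework: the equilibrium relation $v(s^\pm+d_1)=f(s^\pm,c^\pm)$ with $d_1>0$ and $f(0,c)=0$ gives $v<f(s^\pm,c^\pm)/s^\pm=f_s(\sigma,c^\pm)\le\|f_s\|_\infty$ by the mean value theorem, while for $s$-shocks $v=[f]/[s]$ is an $f_s$ value directly.

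Second, the third bullet is not actually finished: you send $s^+=s^-$ with $s^\pm\in\{0,1\}$ to ``boundary cases already analysed,'' but only $s=0$ was analysed, and the case $s^+=s^-=1$ cannot be dismissed by the same phase-portrait reasoning. For $s^\pm=1$ and $c^->c^+$ the first condition in \eqref{eq:RH-1} is vacuous (because $f(1,c)\equiv1$), the second gives $v=[c]/([c]+[a])\in(0,1)$, the line $s=1$ is invariant for \eqref{eq:dyn_sys_cap_diff} (since $f(1,c)-v(1+d_1)\equiv0$ on it), and along that line the equation $c_\xi=v g(c)$ with your strictly convex $g$ has a decreasing heteroclinic from $c^-$ to $c^+$. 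So the travelling-wave system \emph{produces} a connecting orbit in this configuration --- it is the single-phase chromatographic shock --- and a complete proof of the bullet must either exclude this degenerate configuration explicitly (as never adjacent to a genuine two-phase jump in the relevant class of solutions) or handle it by an argument other than nonexistence of orbits; your own closing caveat flags this, but flagging it is not proving it. A smaller quibble of the same nature: your $v=0$ exclusion reads \eqref{eq:dyn_sys_cap_diff} literally at $v=0$, but the parametrization $vd_1$, $vd_2$ of the integration constants degenerates there; the correct degenerate system is $s_\xi=f(s,c)-f(s^\pm,c^\pm)$ with constant $c_\xi$, so one should first use \eqref{eq:RH-1} with $v=0$ (which gives $[f]=[cf]=0$, hence $s^\pm=0$ and then $c^+=c^-$) before invoking the phase-line argument.
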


\subsection{Admissible $s$-shocks}
\label{sec:sec2-s-shocks}
Consider an $s$-shock, i.e. a shock with no change in $c = c^+ = c^-$. The system \eqref{eq:dyn_sys_cap_diff} for this case simplifies into one equation
\[
s_\xi = f(s, c) - f(s^-, c) - v (s - s^-) =: \Psi(s).
\]
For this equation $s^\pm$ are critical points due to the Rankine--Hugoniot condition \eqref{eq:RH-1}. And the existence of a trajectory connecting these critical points depends on the sign of the right-hand side of the equation between the critical points. For the trajectory to exist it is necessary and sufficient that
\begin{equation}\label{Oleinik_admissibility}
\Psi(s)(s^+-s^-) \geqslant 0 \quad \text{ for all } s \text{ between } s^+ \text{ and } s^-.
\end{equation}
This condition is called Oleinik's entropy condition or E-condition \cite{Oleinik}. Note that since $f$ is $S$-shaped, it is impossible for $\Psi(s)$ to be $0$ inside the interval without changing sign, therefore, the sign in this condition could be interpreted as strict without loss of generality. Note also that $\Psi_s(s) = f_s(s, c) - v$. Therefore, for the current problem this criterion is equivalent to the following pair of conditions:
\begin{itemize}
    \item $\Psi(s) \neq 0$ for all $s$ between $s^+$ and $s^-$;
    \item $f_s(s^+, c) \leqslant v \leqslant f_s(s^-, c)$, but both signs cannot be equal at the same time.
\end{itemize}
The inequalities in the second condition of the pair are known as Lax's shock condition.

\subsection{Admissible $c$-shocks}
\label{sec:sec2-nullclines}

In this section we provide the results of \cite[Sect.~3.3]{MR2024}.

Based on nullcline configuration classification it is easy to trace the possible and impossible trajectories for the travelling wave dynamical system \eqref{eq:dyn_sys_cap_diff} and name admissible and inadmissible $c$-shocks (shocks with $c^+\neq c^-$). Since any possible travelling wave must satisfy the Rankine--Hugoniot condition \eqref{eq:RH-1} and we have the S-shaped condition (F3), for any given $v$, $c^\pm$ there could be at most 2 possible values of $s^+$ and 2 values of $s^-$ that satisfy it. When there are 2 different values for $s^-$, there will be 2 different values for $s^+$ due to monotonicity (F4). In this case we number them in increasing order: $u_{1, 2}^\pm = (s_{1,2}^\pm, c^\pm)$, $s_1^\pm < s_2^\pm$. Due to the Rankine--Hugoniot condition \eqref{eq:RH-1} it is clear that $u_{1,2}^\pm$ represent the critical points of the dynamical system \eqref{eq:dyn_sys_cap_diff}.
\begin{proposition}
\label{prop:c-shock-admissibility}
The local vanishing viscosity condition (W4) imposes the following restrictions on the $c$-shocks:
\begin{itemize}
    \item As already noted in Proposition \ref{prop:inadmissible_shocks}, no trajectories could exist from $c^-$ to $c^+$ if $c^- < c^+$, therefore such shocks are inadmissible.
    \item When $u_1^- \neq u_2^-$ exist, travelling waves connecting $u_2^-$ to $u_1^+$ are inadmissible, since there is no trajectory connecting those critical points.
    \item All other travelling waves between critical points $u^-_{1,2}$ and $u^+_{1,2}$ are admissible.
\end{itemize}
\end{proposition}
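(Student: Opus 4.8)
The plan is to carry out a complete phase-plane analysis of the planar autonomous system \eqref{eq:dyn_sys_cap_diff}, exploiting its crucial triangular structure: writing $P(s,c) = f(s,c) - v(s+d_1)$ and $Q(c) = v(d_1 c - d_2 - a(c))$, the second equation $c_\xi = Q(c)$ is decoupled and depends on $c$ alone. First I would study this scalar equation. A direct computation from the formulas for $d_1,d_2$ gives $Q(c^\pm)=0$, and since $a$ is strictly concave (A3) the function $d_1 c - d_2 - a(c)$ is strictly convex with exactly these two roots; as $v>0$ and $c^+<c^-$ (forced by Proposition~\ref{prop:inadmissible_shocks}), it follows that $Q<0$ strictly for $c\in(c^+,c^-)$. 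Hence $c$ is strictly decreasing along every trajectory in the open strip, the lines $c=c^-$ and $c=c^+$ are invariant, and every heteroclinic orbit must run from $c=c^-$ at $\xi=-\infty$ to $c=c^+$ at $\xi=+\infty$.

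Next I would locate and classify the critical points. They lie on $\{Q=0\}$, i.e. on the two invariant lines, at the solutions of $P(\cdot,c^\pm)=0$, which are precisely the points $u^\pm_{1,2}$. The Jacobian is upper triangular with diagonal entries $\lambda_1 = f_s(s,c)-v$ and $\lambda_2 = v(d_1-a_c(c))$. Convexity of $d_1 c - d_2 - a(c)$ gives $\lambda_2(c^-)>0$ and $\lambda_2(c^+)<0$, while the $S$-shape (F3), together with $P(0,c)=-vd_1<0$ (as $d_1>0$) and $f_s(0,c)=f_s(1,c)=0$ (F2), forces $P(\cdot,c)$ to be negative, then positive, then negative, so that at the smaller root $s_1$ one has $\lambda_1>0$ and at the larger root $s_2$ one has $\lambda_1<0$. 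This classifies $u_1^-$ as a source, $u_2^-$ and $u_1^+$ as saddles, and $u_2^+$ as a sink. In parallel I would describe the nullcline $\{P=0\}$: the implicit function theorem together with $f_c<0$ (F4) shows it splits into a left branch $L$ (where $P_s>0$) joining $u_1^+$ to $u_1^-$ and a right branch $R$ (where $P_s<0$) joining $u_2^-$ to $u_2^+$, both monotone in $c$ and never crossing, which also fixes the ordering $s_1^+<s_1^-<s_2^-<s_2^+$ and the sign of $s_\xi=P$ in the three regions (negative left of $L$, positive between $L$ and $R$, negative right of $R$).

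With this picture the connections follow from invariant-region reasoning together with the Poincar\'e--Bendixson theorem (no periodic orbits can occur, as $c$ is monotone). Checking the vector field on $L$ and on $R$ shows that on both branches the flow points into the middle region, so the middle region is positively invariant; since $c$ decreases to $c^+$ there while $s$ increases, every interior trajectory converges to the unique sink $u_2^+$ in its closure. This yields the admissible connections $u_1^- \to u_2^+$ (the source $u_1^-$ emits a fan of trajectories into the middle region) and $u_2^- \to u_2^+$ (the unstable manifold of the saddle $u_2^-$, whose strip-ray enters the middle region). For the remaining admissible connection $u_1^- \to u_1^+$, I would identify the strip part of the stable manifold of $u_1^+$ and trace it backward up to the invariant line $c=c^-$, where Poincar\'e--Bendixson pins its $\alpha$-limit to $u_1^-$.

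The inadmissibility of $u_2^- \to u_1^+$ and the admissibility of $u_1^- \to u_1^+$ both hinge on one delicate point, which I expect to be the main obstacle: deciding on which side of the branches the separatrices of the two saddles lie. This comes down to comparing the slope of each eigenvector with the slope of the adjacent branch of $\{P=0\}$. The computation shows that at $u_1^+$ the strip-stable direction is less steep than $L$, so the stable manifold of $u_1^+$ lies in the left region, while at $u_2^-$ the unstable direction is less steep than $R$, so the unstable manifold of $u_2^-$ lies in the middle region. Since the middle region is invariant and disjoint from the left region, the unique trajectory leaving $u_2^-$ into the strip is trapped there and converges to $u_2^+$, so it can never reach $u_1^+$; as a saddle, $u_2^-$ emits no other orbit into the strip, and therefore no heteroclinic orbit $u_2^- \to u_1^+$ exists. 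All other connections between the $u^\pm_{1,2}$ are realized, which is the assertion of the proposition; the degenerate configurations (a double root, $u_1^\pm=u_2^\pm$) are excluded from the inadmissibility claim by hypothesis and are covered by the same slope comparisons in the limit.
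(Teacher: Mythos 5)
Your proposal is correct, and it is essentially the approach the paper relies on: the paper gives no proof of this proposition at all, importing it from \cite[Sect.~3.3]{MR2024}, whose argument is precisely the ``nullcline configuration classification'' of the travelling-wave system \eqref{eq:dyn_sys_cap_diff} that you carry out in full (decoupled $c$-equation, triangular Jacobian classifying $u_1^-$ as source, $u_2^-$, $u_1^+$ as saddles, $u_2^+$ as sink, invariant middle region, and the saddle-separatrix slope comparison). Your key computations check out --- in particular, the unstable separatrix of $u_2^-$ does enter the region between the nullcline branches and is trapped there, which is exactly what kills the $u_2^-\to u_1^+$ connection --- so your write-up amounts to a correct, self-contained reconstruction of the cited proof.
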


\subsection{The uniqueness theorem}
\label{sec:uniqueness}

In his famous 1970 paper \cite{Kruzhkov} Kru\v{z}kov proved the weak solution uniqueness theorem for scalar conservation laws. We followed the scheme of his proof as explained in \cite{Serre1} to prove a similar uniqueness theorem for the polymer injection system \eqref{eq:main_system_chem_flood}.

\begin{theorem}[Theorem 6.1, \cite{MR2024}]
\label{uniqueness-theorem}
Problem \eqref{eq:main_system_chem_flood} with initial-boundary conditions \eqref{eq:Initial_boundary_problem} satisfying the restrictions (S1)--(S3), with flow function satisfying (F1)--(F4) and adsorption satisfying (A1)--(A3) can only have a unique W-solution.
\end{theorem}

\section{Lagrange coordinate transformation}
\label{sec:Lagrange}

In this section we recall the results of \cite[Sect.~4]{MR2024} and adapt them to the case of boundary conditions \eqref{eq:Initial_boundary_problem}. For brevity we omit all proofs and most auxiliary lemmas and propositions.

\subsection{Lagrange coordinates}
\label{sec2-Lagrange}
The history of the Lagrange coordinate transformation in hyperbolic conservation laws dates back at least to the first half of the XX century, to the work \cite{Courant} cited in \cite{Wa87} in the context of gas dynamics equations. The idea is also presented in the lectures by Gelfand \cite{Gelfand} for the case of an arbitrary system of conservation laws. The splitting technique using the Lagrange coordinate transformation we apply to the system \eqref{eq:main_system_chem_flood} is presented in \cite{PiBeSh06}. It is later developed and applied to different systems by many authors (see \cite{Pires2021} and references therein). Previously we used this transformation to prove the uniqueness theorem in \cite{MR2024} and provide the formal proof of the solution mapping. We also studied the area where $s=0$, which is where the transformation breaks down and becomes non-smooth.

\begin{lemma}[Lemma 4.4, \cite{MR2024}]
\label{lemma:t0_is_shocks}
For all $x > 0$ we define
\[
t_0(x) = \sup\{ t: s(x, t) = 0 \}.
\]
Then 
\begin{itemize}
    \item $t_0(x) < +\infty$;
    \item $(x, t_0(x))$ is a point on a shock;
    \item $t_0(x)$ is continuous, piece-wise $\mathcal{C}^1$-smooth.
\end{itemize}
\end{lemma}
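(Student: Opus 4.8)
The plan is to identify the zero-saturation set with the part of the reservoir not yet reached by water, to show that its upper boundary in $t$ is a single advancing front consisting of admissible shocks, and then to read off the three assertions from the geometry of that front. First I would fix the qualitative structure of $D_0 := \{(x,t) : x > 0,\ s(x,t) = 0\}$. Since water is injected at the boundary ($s(0,t)=1$) and the initial state is dry ($s(x,0)=0$), the slice $\{t : s(x,t) = 0\}$ is nonempty for every $x$ (it contains $t=0$). I would then argue the monotonicity ``once wet, stays wet'': the boundary of $D_0$ cannot contain an admissible shock whose dry state is the left (behind) state, because by Proposition~\ref{prop:inadmissible_shocks} shocks with $s^-=0$ are inadmissible. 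Together with (W1) this forces $D_0$ to be, at each time, a half-line $\{x \ge X(t)\}$, so that $t_0(x)$ is exactly the arrival time of the front $x = X(t)$ and the supremum in its definition is attained as the endpoint of the interval $[0, t_0(x)]$.

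Next I would treat finiteness and the shock property together. Across the front $s$ drops from a positive value behind to $0$ ahead; since the dissolved chemical is transported no faster than the carrying water and begins strictly behind the leading edge, $c \equiv 0$ in a neighbourhood of the front and the system reduces locally to the scalar Buckley--Leverett law $s_t + f(s,0)_x = 0$. Finite propagation speed gives $\dot X \le \|f\|_{\mathcal C^1}$, while admissibility (the Oleinik/Welge tangent construction together with Proposition~\ref{prop:inadmissible_shocks}) yields a strictly positive lower bound $\dot X \ge v_0 > 0$ for the leading-shock speed; hence $X(t) \ge v_0 t \to \infty$ and $t_0(x) \le x/v_0 < +\infty$. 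For the shock property I argue by contradiction: if $(s,c)$ were continuous at $P=(x,t_0(x))$, then $s(P)=0$, and just behind $P$ the solution would be governed classically by $s_t + f(s,0)_x = 0$ with $f_s(0,0)=0$ by (F2); the characteristics carry constant $s$ with speed $f_s(s,0)\to 0$ as $s\to 0^+$, so the front would be locally stationary, contradicting $\dot X \ge v_0 > 0$. Thus $(s,c)$ is discontinuous at $P$, and by (W1) that point lies on a $\mathcal C^1$ jump-discontinuity curve, i.e. on a shock.

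Finally, regularity. By (W1) the front $\{x = X(t)\}$ is a union of $\mathcal C^1$ shock arcs meeting at a locally finite set of interaction points, and along it $x$ is strictly increasing in $t$ because the speed satisfies $v_0 \le \dot X \le \|f\|_{\mathcal C^1}$, i.e. it is bounded away from both $0$ and $\infty$. Consequently $X$ is a strictly monotone, continuous, piecewise-$\mathcal C^1$ function of $t$, its inverse $t_0 = X^{-1}$ is continuous, and on each arc the inverse function theorem applies (the slope $1/\dot X$ is finite and continuous since $\dot X \ge v_0 > 0$), so $t_0$ is $\mathcal C^1$ there and piecewise $\mathcal C^1$ overall, with possible kinks only at the images of the finitely many interaction points.

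I expect the main obstacle to be the second assertion — rigorously excluding a continuous (rarefaction-down-to-zero) leading edge. The degeneracy $f_s(0,c)=0$ from (F2) is precisely what makes a continuous front impossible, but converting ``characteristics of vanishing speed cannot sweep out an advancing front'' into a clean contradiction requires care, as does the preliminary reduction to the scalar equation via $c\equiv 0$ near the leading edge.
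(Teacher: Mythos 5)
Since this paper omits the proof of this lemma (it is quoted from \cite{MR2024}), your attempt can only be judged against the framework the paper sets up, and judged that way it contains a genuine logical gap: your proofs of the first two assertions are circular. Your finiteness argument requires the lower bound $\dot X \ge v_0 > 0$ to hold along the whole leading edge, i.e.\ it requires the front to be an admissible shock everywhere --- which is exactly assertion 2, proved only afterwards. Conversely, your proof of assertion 2 derives its contradiction from ``$\dot X \ge v_0 > 0$'', but at a point where $(s,c)$ is assumed continuous the front is, by that very assumption, not a shock there, so the Oleinik/Welge bound (which you obtain by applying the E-condition to a shock with $s^+=0$) gives you nothing to contradict. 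Indeed, a front that is momentarily continuous, stalls for a finite time, and is later overtaken by a shock from behind is compatible with every bound you state --- and in that scenario $(x,t_0(x))$ still lies on the arriving shock, which shows the contradiction must be extracted from the \emph{definition of $t_0$ as a supremum} (continuity at $(x,t_0(x))$ plus the degeneracy $f_s(0,c)=0$ of (F2) would force dryness at $x$ to persist past $t_0(x)$), not from a speed bound. The missing ingredient that breaks the circle is global, not local: a mass-balance argument. By (F1), $f(1,c)\equiv 1$ and $f(0,c)\equiv 0$, so if the front never passes $x_0$ then no water crosses $x_0$ while water enters at unit rate, whence $\int_0^{x_0} s(x,t)\,dx$ grows linearly in $t$ yet is bounded by $x_0$ --- giving $t_0(x_0)<+\infty$ with no reference whatsoever to the structure of the front. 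With finiteness secured this way, a corrected local argument for assertion 2 and your inverse-function argument for assertion 3 can be run non-circularly.

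Two secondary points. First, the reduction to the scalar law via ``$c\equiv 0$ near the leading edge'' is asserted on physical grounds (chemical slower than water); in the paper's framework the dry-side statement is Proposition \ref{prop:c_on_zero_boundary} (from $s=0$ the second equation gives $a(c)_t=0$), while the wet-side statement needs an actual retardation argument that you do not supply. Second, your uniform bound $v_0$ deserves a proof rather than a citation: positivity of each admissible shock speed (Proposition \ref{prop:inadmissible_shocks}) does \emph{not} give a uniform bound, since the Rankine--Hugoniot speed $f(s^-,c)/s^-$ of a shock into dry rock tends to $f_s(0,c)=0$ as $s^-\to 0$; what saves you is that Oleinik's condition forces $s^-$ to lie at or above the Welge tangent point, whence $f(s^-,c)/s^- \ge f(1,c) = 1$. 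Spelling this out is necessary, because it is precisely the step where admissibility, and not mere Rankine--Hugoniot algebra, enters.
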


\begin{corollary}[Corollary 4.5, \cite{MR2024}]\label{corollary:zero_flow_area}
Define $\Omega_0 = \{(x,t): x > 0, 0\leqslant t < t_0(x)\}$. Then $s(x,t) = 0$ in $\Omega_0$ and $s(x,t) > 0$ outside $\overline{\Omega}_0$. Moreover, $s(x,t)$ is locally separated from $0$ outside $\overline{\Omega}_0$.
\end{corollary}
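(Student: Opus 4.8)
The plan is to treat the three assertions in increasing order of difficulty, extracting as much as possible directly from the supremum definition of $t_0$ and from Lemma~\ref{lemma:t0_is_shocks}, and isolating the real work in the claim that $s$ vanishes throughout $\Omega_0$.

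First I would record the elementary facts. Since $(s,c)$ is a W-solution, $s$ is continuous and $s\ge 0$, so the zero set $Z=\{(x,t):x>0,\ s(x,t)=0\}$ is relatively closed and contains the whole initial ray $\{t=0,\ x>0\}$. For fixed $x$ the zero set of $s(x,\cdot)$ is closed and, by Lemma~\ref{lemma:t0_is_shocks}, bounded, so its supremum $t_0(x)$ is attained, i.e. $s(x,t_0(x))=0$. For $t>t_0(x)$ the definition of the supremum forces $s(x,t)\neq 0$, and nonnegativity upgrades this to $s(x,t)>0$. Because $t_0$ is continuous, the complement of $\overline{\Omega}_0$ inside $\{x>0\}$ is exactly $\{(x,t):t>t_0(x)\}$, which already proves $s>0$ outside $\overline{\Omega}_0$.

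The heart of the proof is showing $s\equiv 0$ on $\Omega_0$, i.e. that the zero set of $s(x,\cdot)$ has no holes below $t_0(x)$; equivalently, that \emph{once wet, the medium stays wet}. Suppose instead that $s(x_0,t_1)>0$ for some $(x_0,t_1)\in\Omega_0$. Since $s(x_0,t_0(x_0))=0$, continuity yields a first time $t_*\in(t_1,t_0(x_0)]$ with $s(x_0,t_*)=0$ while $s(x_0,\cdot)>0$ on $[t_1,t_*)$: a transition from a wet to a dry state as $t$ increases. I would rule this out with the admissibility machinery of Sect.~\ref{sec:admissibility}. If $(x_0,t_*)$ lies on a shock $\gamma$, then $\gamma'(t_*)=v>0$ by Proposition~\ref{prop:inadmissible_shocks}; tracking which side of $\gamma$ the vertical line $\{x=x_0\}$ occupies just below and just above $t_*$ identifies the newly reached dry value with the left state $s^-$ of the shock, and a shock with $s^-=0$ is inadmissible by Proposition~\ref{prop:inadmissible_shocks}. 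If instead $(x_0,t_*)$ is a point of smoothness, I would invoke (F2), namely $f_s(0,c)=0$: the level $s=0$ propagates with characteristic speed $f_s(0,c)=0$, so the edge of $\{s=0\}$ produced by smooth (rarefaction) data is vertical in the $(x,t)$ plane and cannot cap a wet region from above along $\{x=x_0\}$. Either way we reach a contradiction, so no wet point exists in $\Omega_0$ and $s\equiv 0$ there. Combining with the previous paragraph, $Z=\overline{\Omega}_0\cap\{x>0\}$. The final ``locally separated from $0$'' assertion is then pure compactness: any compact $K$ contained in $\{x>0\}\setminus\overline{\Omega}_0$ is disjoint from the closed set $Z$, so the continuous function $s$ is strictly positive on $K$ and attains a positive minimum, whence $\inf_K s>0$.

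I expect the only genuine obstacle to be the wet-to-dry step inside $\Omega_0$. The shock subcase requires sign-correct bookkeeping of the orientation of $\gamma$ relative to the vertical line $\{x=x_0\}$, so that the dry side is matched to $s^-$ and Proposition~\ref{prop:inadmissible_shocks} can be applied; the smooth subcase rests on the vanishing of $f_s$ at $s=0$ from (F2), which is precisely what forbids a horizontal-in-$t$ approach to the dry state. Everything else reduces to the supremum definition of $t_0$ together with the continuity supplied by Lemma~\ref{lemma:t0_is_shocks}.
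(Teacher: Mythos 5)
Since the paper imports this corollary from \cite{MR2024} and omits the proof, your proposal can only be judged on its own terms, and it contains a genuine gap: throughout, you treat $s$ as a globally continuous function, but W-solutions are only piecewise $\mathcal C^1$ with jump discontinuities along shock curves. Indeed, Lemma~\ref{lemma:t0_is_shocks} states that $(x,t_0(x))$ is a point \emph{on a shock}, so $s$ is discontinuous precisely along the boundary of $\Omega_0$; in particular your opening claims that the zero set is relatively closed and that the supremum is attained, $s(x,t_0(x))=0$, are unfounded (the limit of $s$ from above the front is positive). This slip is mostly harmless for the first two assertions: ``$s\neq 0$ for $t>t_0(x)$'' follows from the supremum definition alone, and your wet-to-dry contradiction inside $\Omega_0$ is sound in outline --- a right-moving shock ($v>0$ by Proposition~\ref{prop:inadmissible_shocks}) indeed puts the later-in-time state on the $s^-$ side, and $s^-=0$ is inadmissible; the smooth case works because $f(0,c)=0$ and (F2) give not only $f_s(0,c)=0$ but also $f_c(0,c)=0$, so $s_t=0$ on the zero set, which a Gronwall-type estimate (using local boundedness of $s_x$, $c_x$ and $|f_s|,|f_c|\le Cs$ near $s=0$) upgrades to vertical invariance of zeros. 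That estimate, rather than the heuristic ``the edge is vertical,'' is what the argument actually needs, but the idea is correct.

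Where the gap is fatal is the final assertion, local separation from $0$. Your compactness step --- ``$s$ is continuous and pointwise positive on the compact $K$, hence attains a positive minimum'' --- is exactly what fails for discontinuous functions: a piecewise $\mathcal C^1$ function can be strictly positive at every point of $K$ and still satisfy $\inf_K s=0$, because a one-sided limit of $s$ along a shock curve crossing $K$ may vanish. Excluding that is the real content of the claim: one must show that no shock lying in the complement of $\overline\Omega_0$ can have a vanishing one-sided state. The case $s^-=0$ is killed by Proposition~\ref{prop:inadmissible_shocks}, but $s^+=0$ is an \emph{admissible} configuration (a Buckley--Leverett-type front; it is exactly what occurs on $t_0(x)$ itself, consistent with the last item of Proposition~\ref{prop:inadmissible_shocks}), so it must be ruled out inside the complement by a quantitative argument --- e.g.\ Oleinik's condition forces $s^-$ at such a front to exceed the tangency value of $f(\cdot,c)$, and one then controls $s$ along backward characteristics, using (W2) to bound the accumulated effect of the $f_c\,c_x$ term, to get a uniform positive lower bound. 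None of this appears in your proposal; the compactness shortcut silently assumes the conclusion.
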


\begin{proposition}[Proposition 4.6, \cite{MR2024}]\label{prop:c_on_zero_boundary}
$c_t = 0$ in $\Omega_0$, therefore $c(x, t_0(x)) = 0$.
\end{proposition}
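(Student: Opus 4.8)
The plan is to prove the claim $c_t = 0$ in $\Omega_0$ and deduce $c(x,t_0(x)) = 0$ by exploiting the degeneracy of the system where $s = 0$. The key structural fact is Corollary~\ref{corollary:zero_flow_area}: throughout $\Omega_0$ we have $s \equiv 0$, so $s_t = s_x = 0$ there. I would first substitute this into the second equation of \eqref{eq:main_system_chem_flood}. Writing the second conservation law in the region where $(s,c)$ is $\mathcal C^1$, expand it as
\begin{equation*}
(cs + a(c))_t + (cf(s,c))_x = 0,
\end{equation*}
and use that $s = 0$ forces $f(s,c) = f(0,c) = 0$ by assumption (F1), and that $cs \equiv 0$ since $s \equiv 0$. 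The term $(cf(s,c))_x$ vanishes because $f(0,c) \equiv 0$ identically in $c$, so its $x$-derivative is zero throughout $\Omega_0$. Likewise $(cs)_t = 0$. What remains is $a(c)_t = a_c(c)\, c_t = 0$. Since (A2) gives $a_c(c) > 0$, we conclude $c_t = 0$ in the interior of $\Omega_0$ wherever the solution is smooth.

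Next I would handle the non-smooth points, which is where the main subtlety lies. By (W1) the exceptional set inside $\Omega_0$ is a locally finite union of $\mathcal C^1$ curves where $s$ or $c$ jumps; but Corollary~\ref{corollary:zero_flow_area} already pins $s \equiv 0$ in $\Omega_0$, so $s$ cannot jump there, and any jump curve must be a pure $c$-jump with $s^+ = s^- = 0$. By Proposition~\ref{prop:inadmissible_shocks}, a shock with $s^- = 0$ is inadmissible (and indeed $s^+ = 0$ forces $c^+ = c^-$), so there can be no admissible shock strictly inside $\Omega_0$. Hence $c$ is in fact continuous across $\Omega_0$, and the pointwise relation $c_t = 0$ extends to all of $\Omega_0$ by continuity. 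This is the step I expect to be the main obstacle: one must rule out the possibility that $c$ carries information into $\Omega_0$ along internal discontinuities, and the argument rests entirely on the admissibility restrictions from Proposition~\ref{prop:inadmissible_shocks} combined with the fact that $s$ is forced to vanish.

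Finally, to obtain $c(x, t_0(x)) = 0$, I would integrate $c_t = 0$ along the vertical segment $\{x\} \times [0, t_0(x))$. Since $c_t = 0$ on this segment, $c(x,\cdot)$ is constant in $t$ for $0 \le t < t_0(x)$, so its value equals the initial datum $c(x,0) = 0$ from \eqref{eq:Initial_boundary_problem}. Taking the limit $t \to t_0(x)^-$ and using the continuity of $c$ up to the boundary shock curve (again guaranteed by the absence of interior discontinuities and the regularity in Lemma~\ref{lemma:t0_is_shocks}), we obtain $c(x, t_0(x)) = 0$. The only care needed is that the limit is taken from inside $\Omega_0$, where we have already established $c \equiv 0$; the value on the shock itself is then the one-sided trace from the $\Omega_0$ side.
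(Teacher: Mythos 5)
Your proof is correct and takes exactly the route that the paper's framework dictates (the paper itself omits the proof, deferring to Proposition 4.6 of \cite{MR2024}): since $s\equiv 0$ in $\Omega_0$, the second conservation law collapses to $a_c(c)\,c_t=0$ because $cs\equiv 0$ and $f(0,c)\equiv 0$ by (F1), whence $c_t=0$ by (A2); interior jump curves of $c$ are ruled out by Proposition~\ref{prop:inadmissible_shocks} (any such jump would have $s^\pm=0$, hence be inadmissible under (W4)); and the initial datum $c(x,0)=0$ then propagates along vertical segments up to the one-sided trace on the shock curve $t=t_0(x)$. Your explicit treatment of possible internal discontinuities is precisely the step needed to make the pointwise computation rigorous, so there is nothing to add.
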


In the case of boundary conditions \eqref{eq:Initial_boundary_problem} we note that when $s_{init} > 0$ we have $t_0(x) \equiv 0$, and $\Omega_0$ is empty. On the other hand, when $s_{init} = 0$, we have $t_0(x) > 0$ for all $x>0$.

We denote by $\varphi$ the potential such that
\begin{equation} 
\label{eq:dPhi}
    d\varphi=f(s,c)\,dt-s\,dx.
\end{equation}
To explain the physical meaning of $\varphi$ let us consider any trajectory $\nu$ connecting $(0, 0)$ and $(x, t)$.
When $s$ denotes the saturation of some liquid, the potential $\varphi(x, t)$ is equal to the amount of this liquid
passing through the trajectory:
\begin{equation}
\label{eq:phi}
    \varphi(x,t)=\int\limits_{\nu} f(s,c)\,dt-s\,dx.
\end{equation}

This coordinate change is only applicable in the area $Q_{orig} = Q\setminus \overline{\Omega}_0$, where the saturation $s$ and the flow function $f(s,c)$ are not zero. It keeps the $x$ coordinate, so it maps the axis $\Gamma_t=\{(0,t): t>0\}$ onto itself. 
When $s_{init} > 0$ (see Fig.~\ref{fig:orig-lagr-areas-not-0}), the axis $\Gamma_x=\{(x,0): x>0\}$, which coincides with the curve $(x, t_0(x) = 0)$ for $x > 0$, maps into a ray $( \varphi_0(x), x)$, where
\begin{equation}
\label{eq:def_varphi_0}
\varphi_0(x) = -\int\limits_0^x s(r, 0) \, dr = - x s_{init}.
\end{equation}
When $s_{init} = 0$ (see Fig.~\ref{fig:orig-lagr-areas-0}), the curve $(x, t_0(x))$ for $x > 0$ maps into $\Gamma_x$, which can also be represented as a ray with  $\varphi_0(x) = - x s_{init} = 0$. Therefore, in both cases  $Q_{orig}$ maps into 
\[
Q_{lagr} = Q \cup \{ (x, \varphi) : 0 < x < +\infty, -x s_{init} < \varphi < +\infty  \}.
\]

Corollary \ref{corollary:zero_flow_area} guaranties that there is a inverse transform given by
\[
dt=\frac1{f(s,c)}\,d\varphi+\frac s{f(s,c)}\,dx,
\]
and the denominators are locally separated from zero, therefore this coordinate change is a piecewise $\mathcal C^1$-diffeomorphism.
Since $\mathcal C^1$-smooth curves preserve their smoothness properties under any diffeomorphism, all discontinuity curves map into $\mathcal C^1$-smooth discontinuity curves.

When applied to the system \eqref{eq:main_system_chem_flood} in weak form (for mathematically rigorous proof of transformation see \cite[Sect.~4.2]{MR2024}) this coordinate change gives us inside the areas of $\mathcal C^1$-smoothness the classical system
\begin{figure}[ht!]
    \begin{minipage}{\linewidth}
    \begin{center}
    \def\lowborder{-3}
\def\leftborder{-1}
\def\upperborder{6}
\def\rightborder{7}
\def\startpoint{0}
\def\phinote{0.8 * \lowborder}
\def\eps{1}
\begin{tikzpicture}[>=stealth', yscale=0.5, xscale=0.5]
\draw[thin,->] ({\leftborder}, 0) -- (\rightborder, 0) node[below] {$x\phantom{x^0}$};
\begin{scope}
\filldraw[fill=black!20, ultra thin]
(\startpoint, 0)  -- (\startpoint+6, 0)
  decorate [decoration={random steps,segment length=1pt,amplitude=1pt}] {-- (\startpoint + 6, \upperborder - \eps)}
 decorate [decoration={random steps,segment length=3pt,amplitude=1pt}] {--(0, \upperborder - \eps)}
 -- (0, 0) -- cycle;
\draw[very thick, blue] (\startpoint, 0)  -- (\startpoint+6, 0);
%\node[below right] at (\startpoint + 3, 3) {$t_0(x)$} ;
\node[] at (\rightborder * 0.45, 4) {$Q_{orig}$};
%\node[below right] at (\startpoint, 0) {$x^0$};
\node[below left] at (0, 0) {$0$};
%\draw[very thick, blue] (0, 0) -- (\startpoint, 0);
\draw[thin,->] (0,\lowborder) -- (0,\upperborder) node[below right ] {$t$};
\end{scope}
\end{tikzpicture}
\hspace{1cm}
\begin{tikzpicture}[>=stealth', yscale=0.5, xscale=0.5]
\begin{scope}
\filldraw[fill=black!20, ultra thin]
(0, 0) -- (\rightborder - \eps, \phinote)
  decorate [decoration={random steps,segment length=1pt,amplitude=0.5pt}] {-- (\rightborder - \eps, \upperborder - \eps)}
 decorate [decoration={random steps,segment length=3pt,amplitude=1pt}] {--(0, \upperborder - \eps)}
 -- (0, 0) -- cycle;
 \draw[thin,->] ({\leftborder}, 0) -- (\rightborder, 0) node[below] {$x$};
\draw[very thick, blue] (0, 0)    --  (\rightborder - \eps, \phinote);
\node[] at (\startpoint + 2.5, -2) {$\varphi_0(x)$} ;
\node[] at (\rightborder * 0.45, 4) {$Q_{lagr}$};
\node[below left] at (0, 0) {$0$};
\draw[dashed]  (\startpoint, \phinote) -- (\startpoint, 0);
\draw[dashed] (\startpoint, \phinote) -- (0, \phinote);
\end{scope}
\draw[thin,->] (0,\lowborder) -- (0,\upperborder) node[below right ] {$\varphi$};
\end{tikzpicture}
    \end{center}
    \end{minipage}
    \caption{Areas $Q_{orig}$ (grey area on the left), and $Q_{lagr}$ (grey area on the right) in the case $s_{init} \neq 0$. The blue line on the left is mapped onto the blue line $\varphi_0(x) = -x s_{init}$ on the right.}
    \label{fig:orig-lagr-areas-not-0}
    \begin{minipage}{\linewidth}
    \begin{center}
    \def\lowborder{-2}
\def\leftborder{-1}
\def\upperborder{6}
\def\rightborder{7}
\def\startpoint{0}
\def\phinote{0.0 * \lowborder}
\def\eps{1}
\begin{tikzpicture}[>=stealth', yscale=0.5, xscale=0.5]
\draw[thin,->] ({\leftborder}, 0) -- (\rightborder, 0) node[below] {$x\phantom{x^0}$};
\begin{scope}
\filldraw[fill=black!20, ultra thin]
(\startpoint, 0)  -- (\startpoint+2, 1) .. controls (\startpoint+3, 1.5) .. (\startpoint + 4, 3) .. controls (\startpoint+4.8, 4) .. (\startpoint + 6, 4.5)
  decorate [decoration={random steps,segment length=1pt,amplitude=1pt}] {-- (\startpoint + 6, \upperborder - \eps)}
 decorate [decoration={random steps,segment length=3pt,amplitude=1pt}] {--(0, \upperborder - \eps)}
 -- (0, 0) -- cycle;
\draw[very thick, red] (\startpoint, 0)  -- (\startpoint+2, 1)  .. controls (\startpoint+3, 1.5) .. (\startpoint + 4, 3) .. controls (\startpoint+4.8, 4) .. (\startpoint + 6, 4.5);
\node[below right] at (\startpoint + 3.5, 3) {$t_0(x)$} ;
\node[] at (\rightborder * 0.45, 4) {$Q_{orig}$};
%\node[below right] at (\startpoint, 0) {$x^0$};
\node[below left] at (0, 0) {$0$};
%\draw[very thick, blue] (0, 0) -- (\startpoint, 0);
\draw[thin,->] (0,\lowborder) -- (0,\upperborder) node[below right ] {$t$};
\end{scope}
\end{tikzpicture}
\hspace{1cm}
\begin{tikzpicture}[>=stealth', yscale=0.5, xscale=0.5]
\begin{scope}
\filldraw[fill=black!20, ultra thin]
(0, 0)  parabola[bend at end] (\startpoint * 0.2, \phinote * 0.3)
parabola (\startpoint * 0.8, \phinote *0.9) parabola[bend at end] (\startpoint, \phinote) -- (\rightborder - \eps, \phinote)
  decorate [decoration={random steps,segment length=1pt,amplitude=0.5pt}] {-- (\rightborder - \eps, \upperborder - \eps)}
 decorate [decoration={random steps,segment length=3pt,amplitude=1pt}] {--(0, \upperborder - \eps)}
 -- (0, 0) -- cycle;
 \draw[thin,->] ({\leftborder}, 0) -- (\rightborder, 0) node[below] {$x$};
\draw[very thick, blue] (0, 0)  parabola[bend at end] (\startpoint * 0.2, \phinote * 0.3)
parabola (\startpoint * 0.8, \phinote *0.9) parabola[bend at end] (\startpoint, \phinote);
\node[] at (\rightborder * 0.45, 4) {$Q_{lagr}$};
%\node[below right] at (\startpoint, 0) {$x^0$};
\node[below left] at (0, 0) {$0$};
\draw[very thick, red] (\startpoint, \phinote) -- (\rightborder - \eps, \phinote);
\draw[dashed]  (\startpoint, \phinote) -- (\startpoint, 0);
\draw[dashed] (\startpoint, \phinote) -- (0, \phinote);
%\node[left] at (0, \phinote) {$\varphi_0(x^0  )$};
%\node[below] at (\startpoint/2-0.5, \phinote/2) {$\varphi_0(x)$};
\end{scope}
\draw[thin,->] (0,\lowborder) -- (0,\upperborder) node[below right ] {$\varphi$};
\end{tikzpicture}
    \end{center}
    \end{minipage}
    \caption{Areas $Q_{orig}$ (grey area on the left), and $Q_{lagr}$ (grey area on the right) in the case $s_{init} = 0$. The red curve on the left is mapped onto the red line on the right.}
    \label{fig:orig-lagr-areas-0} 
\end{figure}
\begin{align*}
\begin{split}
    \frac{\partial}{\partial x}\left(\frac1f\right) - \frac{\partial}{\partial \varphi}\left(\frac s f\right)&=0, \\
    \frac{\partial c}{\partial x} + \frac{\partial a(c)}{\partial \varphi}&=0. 
\end{split}
\end{align*}
We use the notation
\begin{equation} 
\label{eq:def-U-F}
\mathcal{U}(\varphi, x)=\frac1{f(s(x,t),c(x,t))}, \quad \zeta(\varphi,x) = c(x,t), \quad\text{and}\quad\mathcal F(\mathcal U,\zeta)=-\frac{s}{f(s,c)}
\end{equation}
in order to transform this system into the system of conversation laws
\begin{align}
       \mathcal U_x + \mathcal F(\mathcal U,\zeta)_\varphi & = 0,\label{eq:U-lagr-eqn}\\
    \zeta_x + a(\zeta)_\varphi&=0. \label{eq:c-lagr-eqn}
\end{align}
\begin{remark}
The same reasoning as in \cite[Lemma 2.2.1]{Serre1} shows that on every shock the equations in the weak form result in the Rankine--Hugoniot condition
\begin{equation} 
\label{eq:RH-2}
\begin{split}
    v^*[\mathcal U]&=[\mathcal F(\mathcal U,\zeta)],
    \\
    v^*[\zeta]&=[a(\zeta)],
\end{split}
\end{equation} 
where $v^*$ is the velocity of the shock between states $(\mathcal U^-, \zeta^-)$ and $(\mathcal U^+, \zeta^+)$. Here, like in original coordinates, $[q(\mathcal U, \zeta)]=q(\mathcal U^+, \zeta^+)-q(\mathcal U^-, \zeta^-)$.
\end{remark}

Properties of the new flow function $\mathcal F$ (see Fig.~\ref{fig:BLf_lagr}) that correspond to the properties (F1)--(F4) of the function $f$ are listed below.
\begin{figure}[htbp]
    \centering
    \includegraphics[width=0.55\textwidth]{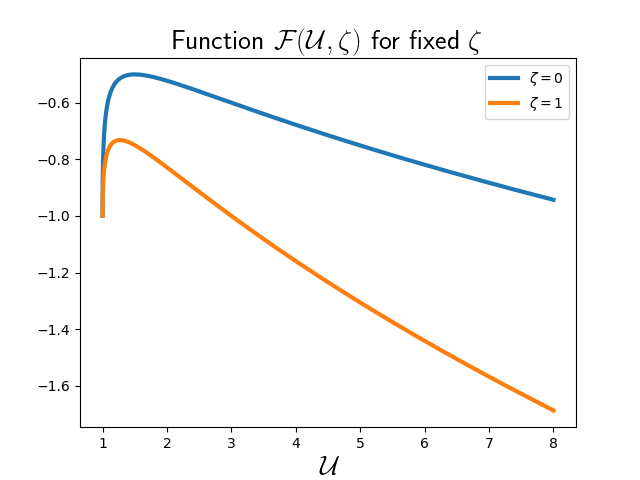}
    \caption{The function $\mathcal F(\mathcal U,\zeta)$ corresponding to the flow function $f(s, c)$ plotted in Fig \ref{fig:BL_ads} (a).}
    \label{fig:BLf_lagr}
\end{figure}
\begin{proposition}[Proposition 4.9, \cite{MR2024}]
\label{prop:new-flow-function-properties}
For all $\zeta\in[0,1]$ the following properties of the function $\mathcal F$ are fulfilled
\begin{itemize}
\item[($\mathcal F$1)]
\begin{itemize}
\item[$\bullet$]
$\mathcal F \in \mathcal C^2(( 1,+\infty)\times[0,1])$; 
\item[$\bullet$]
$\mathcal F(\mathcal U,\zeta) < 0$ for all $\,\mathcal U\in [1,+\infty)$;
\item[$\bullet$]
$\mathcal F(1, \zeta) = - 1$;
\item[$\bullet$]
$\lim\limits_{\mathcal U\to\infty}\mathcal{F}(\mathcal U, \zeta) = -\infty$;
\end{itemize}
 \medskip
\item[($\mathcal F$2)]
\begin{itemize}
\item[$\bullet$]
$\lim\limits_{\mathcal U\to1}\mathcal{F}_{\mathcal U}(\mathcal U, \zeta) = +\infty$;
\item[$\bullet$]
$\lim\limits_{\mathcal U\to\infty}\mathcal{F}_\mathcal{U}(\mathcal U, \zeta) = 0$;
\item[$\bullet$]
there exists a unique global maximum $\mathcal U^{\max}(\zeta)$;
\item[$\bullet$]
 $\mathcal F_{\mathcal U}(\mathcal U, \zeta) > 0$ for $\mathcal U \in (1, \mathcal U^{\max}(\zeta))$; 
 \item[$\bullet$]
 $\mathcal F_{\mathcal U}(\mathcal U, \zeta) < 0$ for
 $\mathcal U \in (\mathcal U^{\max}(\zeta), +\infty)$;
 \end{itemize}
 \medskip
 \item[($\mathcal F$3)]
 \begin{itemize}
\item[$\bullet$] 
function $\mathcal F(\cdot,\zeta)$ has a unique point of inflection $\mathcal U^I(\zeta) \in (1, +\infty)$, such that $\mathcal F_{\mathcal U\mathcal U} (\mathcal U, \zeta) < 0$ for $1 < \mathcal U < \mathcal U^I(\zeta)$ and $\mathcal F_{\mathcal U\mathcal U} (\mathcal U, \zeta) > 0$ for $\mathcal U > \mathcal U^I(\zeta)$. \item[$\bullet$] $\mathcal U^{I}(\zeta) > \mathcal U^{\max}(\zeta)$;
\item[$\bullet$]
therefore, if $\mathcal U$, $\zeta$ are such that $\mathcal F_{\mathcal U} (\mathcal U, \zeta) > 0$, then $\mathcal F_{\mathcal U\mathcal U} (\mathcal U, \zeta) < 0$;
\end{itemize}
\item[($\mathcal F$4)]
function 
$\mathcal F$ is monotone in $\zeta$, i.e.  for all $\mathcal U\in(1, +\infty)$ we have $\mathcal F_\zeta(\mathcal U, \zeta) < 0$. \end{itemize}
\end{proposition}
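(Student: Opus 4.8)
The plan is to reduce every assertion to one–variable calculus by parametrizing the level curve $\zeta=\mathrm{const}$ through the saturation $s$. Fixing $\zeta=c$, assumptions (F1)--(F2) guarantee that $s\mapsto\mathcal U(s)=1/f(s,c)$ is a strictly decreasing $\mathcal C^2$ map of $(0,1]$ onto $[1,+\infty)$, with $d\mathcal U/ds=-f_s/f^2$ nonvanishing on $(0,1)$; by the inverse function theorem it is a $\mathcal C^2$ diffeomorphism onto $(1,+\infty)$, which already yields the smoothness claim in ($\mathcal F$1) and lets me compute every $\mathcal U$-derivative of $\mathcal F=-s/f$ via the chain rule $\partial_{\mathcal U}=(d\mathcal U/ds)^{-1}\,d/ds$. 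The other parts of ($\mathcal F$1) are then immediate: $\mathcal F=-s/f<0$ since $s,f>0$; $\mathcal F(1,\zeta)=-1$ because $\mathcal U=1$ forces $s=1$, $f=1$; and $\mathcal F\to-\infty$ as $\mathcal U\to\infty$ (i.e. $s\to0$), since $f(s,c)/s\to f_s(0,c)=0$ forces $s/f\to+\infty$.

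For ($\mathcal F$2) I would compute $\mathcal F_{\mathcal U}=\dfrac{d\mathcal F/ds}{d\mathcal U/ds}=\dfrac{f}{f_s}-s=\dfrac{f-sf_s}{f_s}$, so that, $f_s$ being positive on $(0,1)$, the sign of $\mathcal F_{\mathcal U}$ is that of $g(s):=f-sf_s$, the $s=0$ intercept of the tangent to $f(\cdot,c)$. Differentiating gives $g'(s)=-sf_{ss}$, so by (F3) $g$ decreases on $(0,s^I)$ from $g(0)=0$ and increases on $(s^I,1)$ to $g(1)=1$; hence $g$ has a unique zero $s^{*}\in(s^I,1)$ (the Welge tangent point), is negative before it and positive after. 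Translating through the decreasing change of variables produces a unique $\mathcal U^{\max}=1/f(s^{*},c)$ with $\mathcal F_{\mathcal U}>0$ on $(1,\mathcal U^{\max})$ and $\mathcal F_{\mathcal U}<0$ on $(\mathcal U^{\max},+\infty)$. The two limits in ($\mathcal F$2) follow from the same formula: as $\mathcal U\to1$ ($s\to1$) one has $f\to1$, $f_s\to0^+$, hence $f/f_s\to+\infty$; as $\mathcal U\to\infty$ ($s\to0$) both $f/f_s$ and $s$ tend to $0$.

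For ($\mathcal F$3) I differentiate once more, obtaining $\mathcal F_{\mathcal U\mathcal U}=\dfrac{f^3 f_{ss}}{f_s^3}$, whose sign is exactly that of $f_{ss}$; by (F3) this is positive for $s<s^I$ and negative for $s>s^I$, which under the orientation–reversing substitution becomes $\mathcal F_{\mathcal U\mathcal U}<0$ on $(1,\mathcal U^I)$ and $\mathcal F_{\mathcal U\mathcal U}>0$ on $(\mathcal U^I,+\infty)$, with $\mathcal U^I=1/f(s^I,c)$. The ordering $s^{*}>s^I$ found above gives $\mathcal U^I>\mathcal U^{\max}$, and since $\mathcal F_{\mathcal U}>0$ forces $\mathcal U<\mathcal U^{\max}<\mathcal U^I$ it forces $\mathcal F_{\mathcal U\mathcal U}<0$, the last bullet. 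For ($\mathcal F$4) I instead fix $\mathcal U$: writing $\mathcal F=-\mathcal U\,s(\mathcal U,\zeta)$ where $s$ solves $f(s,\zeta)=1/\mathcal U$, implicit differentiation yields $s_\zeta=-f_c/f_s$, whence $\mathcal F_\zeta=\mathcal U\,f_c/f_s=\dfrac{f_c}{f f_s}$, negative by (F4).

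The routine differentiations are harmless; the delicate points are the boundary asymptotics. The sharpest obstacle is justifying $\mathcal F_{\mathcal U}\to0$ as $s\to0$: both $f$ and $f_s$ vanish there (since $f_s(0,c)=0$), so $f/f_s$ is a $0/0$ limit that must be controlled using the genuine convexity near the origin — e.g. $f_{ss}(0,c)>0$ giving $f/f_s\sim s/2$ — and one should record the mild nondegeneracy this requires. The uniqueness of $s^{*}$ (equivalently of $\mathcal U^{\max}$) and the strict inequality $\mathcal U^I>\mathcal U^{\max}$ likewise hinge entirely on the monotonicity identity $g'=-sf_{ss}$ together with the sign pattern of $f_{ss}$ from (F3); everything else is bookkeeping of signs under the decreasing map $s\mapsto\mathcal U$.
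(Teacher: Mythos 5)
The paper does not actually prove this proposition: it is imported verbatim from \cite{MR2024} (Proposition 4.9 there), and Sect.~\ref{sec:Lagrange} explicitly omits all proofs. So your proposal has to stand on its own, and it essentially does. The reduction to one-variable calculus along $\zeta=\mathrm{const}$, the identities $\mathcal F_{\mathcal U}=(f-sf_s)/f_s$, $\mathcal F_{\mathcal U\mathcal U}=f^3f_{ss}/f_s^3$, $\mathcal F_\zeta=f_c/(ff_s)$, and the analysis of $g(s)=f-sf_s$ through $g'=-sf_{ss}$, $g(0)=0$, $g(1)=1$ correctly deliver every bullet of ($\mathcal F$1)--($\mathcal F$4), including the Welge-tangent characterization of $\mathcal U^{\max}$ and the ordering $\mathcal U^I>\mathcal U^{\max}$ from $s^*>s^I$.

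One substantive correction to your closing paragraph: the limit $\mathcal F_{\mathcal U}\to 0$ as $\mathcal U\to\infty$ does \emph{not} require any nondegeneracy such as $f_{ss}(0,c)>0$, and you should not add such a hypothesis --- it is not among (F1)--(F4) and the proposition is true without it. Your own sign analysis already closes the $0/0$ issue: you proved $g(s)=f-sf_s<0$ on $(0,s^*)$, which reads $0<f/f_s<s$ there, hence $|\mathcal F_{\mathcal U}|=|f/f_s-s|<s\to 0$ by squeezing. (Equivalently: $f_s(\cdot,c)$ is increasing on $(0,s^I)$ by (F3), so $f(s)=\int_0^s f_s(r)\,dr\le s f_s(s)$, no Taylor expansion at $s=0$ needed.) A second, smaller refinement: the joint $\mathcal C^2$ regularity of $\mathcal F$ on $(1,+\infty)\times[0,1]$ claimed in ($\mathcal F$1) should come from the implicit function theorem applied to $f(s,\zeta)=1/\mathcal U$ (using $f_s>0$ on $0<s<1$), not from the one-variable inverse function theorem at fixed $\zeta$, which only yields smoothness in $\mathcal U$; this is the same implicit differentiation you already invoke when computing $s_\zeta=-f_c/f_s$ for ($\mathcal F$4), so the fix costs one sentence.
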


\section{Entropy conditions in Lagrange coordinates}
\label{sec:entropy}

\subsection{Mapping shocks to Lagrange coordinates}
\label{sec:sec5-shock-mapping}
When constructing the solution in Lagrange coordinates, we need to discern which shocks are admissible. One possible approach (see e.g. \cite{Shen}) is to use the vanishing viscosity method directly for the system \eqref{eq:U-lagr-eqn}--\eqref{eq:c-lagr-eqn} to establish admissibility criteria for the shocks. However, we consider that approach flawed. It adds second-order terms into the equation system in Lagrange coordinates, but those terms lack any kind of physical significance. In some simple cases it could be argued that the set of admissible shocks is the same regardless of how we add dissipative terms, but it was shown in \cite{Bahetal} that even for the original system with a more complex dependence of the flux function on $c$ we have different admissible shocks for different ratios of dissipative parameters. Therefore, the best approach is to choose the most physically meaningful form of dissipative terms to tie the admissibility of shocks to something that could be experimentally measured in real systems.

That is why we established admissibility in original coordinates with dissipative system \eqref{eq:main_system_dissipative}. Now, in order to define admissibility in Lagrange coordinates, we use \eqref{eq:def-U-F} to construct a map between shocks in original coordinates and shocks in Lagrange coordinates. Only shocks corresponding to vanishing viscosity admissible shocks in original coordinates will be considered admissible in Lagrange coordinates. This mapping goes as follows. 

Consider a shock in original coordinates at the point $(x_1, t_1)$ with values
\[
s^\pm = s(x_1\pm 0, t_1), \quad c^\pm = c(x_1\pm 0, t_1).
\]
Denote $\theta_c(s) = \frac{1}{f(s,c)}$ and $\vartheta_c = \theta_c^{-1}$ its inverse function with respect to its argument $s$. Using these functions we map
\[
\mathcal U^{[+]} = \theta_{c^+}(s^+), \quad \mathcal U^{[-]} = \theta_{c^-}(s^-), \quad \zeta^{[+]} = c^+, \quad \zeta^{[-]} = c^-,
\]
\[s^+ = \vartheta_{\zeta^{[+]}}(\mathcal U^{[+]}), \quad s^- = \vartheta_{\zeta^{[-]}}(\mathcal U^{[-]}), \quad c^+ = \zeta^{[+]}, \quad c^- = \zeta^{[-]}. 
\]
Note, that in the original coordinates values $s^\pm$ correspond to $x\to x_1\pm 0$ respectively. The shock velocity in original coordinates is always positive due to Proposition \ref{prop:inadmissible_shocks}, therefore, $s^\pm$ correspond to $t\to t_1\mp 0$:
\[
s^\pm = s(x_1, t_1 \mp 0), \quad c^\pm = c(x_1, t_1\mp 0).
\]
Further, due to \eqref{eq:dPhi} and (F1), when $x = x_1$ is fixed, $t\to t_1\mp 0$ correspond to $\varphi\to \varphi_1\mp 0$ for the point $(\varphi_1, x_1)$ on a corresponding shock in Lagrange coordinates, and so do $\mathcal U^{[\pm]}$:
\[
\mathcal U^{[\pm]} = \mathcal U(\varphi_1 \mp 0, x_1), \quad \zeta^{[\pm]} = \zeta(\varphi_1 \mp 0, x_1).
\]
But for the equations \eqref{eq:U-lagr-eqn}, \eqref{eq:c-lagr-eqn} in Lagrange coordinates the $x$ axis plays the role of time and $\varphi$ the role of space, so we would like to denote $\mathcal U^\pm$ to correspond to $\varphi\to \varphi_1\pm 0$. Thus, we denote
\[
\mathcal U^+ = \mathcal U^{[-]}, \quad \mathcal U^- = \mathcal U^{[+]}, \quad \zeta^+ = \zeta^{[-]}, \quad \zeta^- = \zeta^{[+]},
\]
\[
\mathcal U^{\pm} = \mathcal U(\varphi_1 \pm 0, x_1), \quad \zeta^{\pm} = \zeta(\varphi_1 \pm 0, x_1),
\]
and obtain a one-to-one mapping of shocks in original and Lagrange coordinates:
\begin{equation}\label{shock_mapping}
(\mathcal U^\pm, \zeta^\pm) \to (\vartheta_{\zeta^\mp}(\mathcal U^\mp), \zeta^\mp), \quad (s^\pm, c^\pm) \to (\theta_{c^\mp}(s^\mp), c^\mp).
\end{equation}

\subsection{Oleinik, Lax and entropy admissibility for $\mathcal U$-shocks}
\label{sec:sec2-s-shock-admissibility}

We transfer the Oleinik and Lax conditions described in Sect.~\ref{sec:sec2-s-shocks} to the Lagrange coordinates, obtaining similar inequalities for $\mathcal F(\mathcal U, \zeta)$, $\zeta = c$ and
\[
\Psi^*(\mathcal U) = \mathcal F(\mathcal U, \zeta) - \mathcal F(\mathcal U^-, \zeta) - v^*(\mathcal U - \mathcal U^-),
\]
\[
v^* = 
\begin{cases}
\dfrac{\mathcal F(\mathcal U^-, \zeta) - \mathcal F(\mathcal U^+, \zeta)}{\mathcal U^- - \mathcal U^+}, & \mathcal U^- < +\infty,\\
0, & \mathcal U^- = +\infty.
\end{cases}
\]
Similar to the original coordinates, $\mathcal U^\pm$ are zeroes of $\Psi^*$ due to the exact formula for $v^*$ obtained from Rankine--Hugoniot conditions \eqref{eq:RH-2}.

\begin{lemma}[Lemma 5.1, \cite{MR2024}]
\label{lemma_Oleinik_equivalence}
Lax condition for an $s$-shock in original coordinates is equivalent to Lax condition for the corresponding $\mathcal U$-shock in Lagrange coordinates:
\begin{equation}\label{eq:Lax-Lagrange}
\mathcal F_{\mathcal U}(\mathcal U^+, \zeta) \leqslant v^* \leqslant \mathcal F_{\mathcal U}(\mathcal U^-, \zeta),
\end{equation}
and both signs can only be equal at the same time when $\mathcal U^- = +\infty$.

Moreover, $\Psi^*(\mathcal U) \neq 0$ if and only if for the corresponding $s$ we have $\Psi(s) \neq 0$, therefore the Oleinik E-condition is also equivalent for $s$-shocks in original coordinates and $\mathcal U$-shocks in Lagrange coordinates.
\end{lemma}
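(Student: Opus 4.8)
The plan is to reduce the whole statement to two explicit computations performed through the mapping \eqref{shock_mapping}, after which both equivalences become a matter of dividing by manifestly positive quantities. Throughout I fix $c = c^+ = c^- = \zeta$ (we deal with an $s$-shock) and write $f^\pm = f(s^\pm, c)$; recall that $s^- > 0$ by Proposition~\ref{prop:inadmissible_shocks}, so $f^- > 0$, and that the velocity is positive, $v > 0$. First I would compute the Lagrange flux derivative in terms of the original data. Since $\mathcal F(\mathcal U, \zeta) = -\mathcal U\,\vartheta_\zeta(\mathcal U)$ and $\tfrac{d\mathcal U}{ds} = \theta_\zeta'(s) = -f_s/f^2$, the chain rule yields $\mathcal F_{\mathcal U}(\mathcal U, \zeta) = \frac{f(s,\zeta)}{f_s(s,\zeta)} - s$, where $s = \vartheta_\zeta(\mathcal U)$. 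Next I would express $v^*$ through $v$: substituting the Rankine--Hugoniot relation $v[s]=[f]$ into the defining quotient for $v^*$ gives, in the finite case $\mathcal U^- < +\infty$, the symmetric identity $v^* = \frac{f^\pm - v s^\pm}{v}$ (a one-line check using $v[s]=[f]$ confirms both signs give the same value).

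With these two identities in hand the Lax equivalence is immediate, once the $\pm$-swap of the mapping \eqref{shock_mapping} is respected, namely $\mathcal U^+\leftrightarrow s^-$ and $\mathcal U^-\leftrightarrow s^+$. Substituting, the left Lagrange inequality $\mathcal F_{\mathcal U}(\mathcal U^+, \zeta)\le v^*$ becomes $\frac{f^-}{f_s(s^-,c)}\le \frac{f^-}{v}$, which after cancelling $f^- > 0$ and inverting positive reciprocals is exactly $v \le f_s(s^-, c)$; likewise the right Lagrange inequality becomes $\frac{f^+}{v}\le\frac{f^+}{f_s(s^+,c)}$, i.e. $f_s(s^+,c)\le v$, provided $f^+ > 0$. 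Thus the two Lagrange inequalities are precisely the two original Lax inequalities, merely interchanged in role. For the Oleinik part I would establish the single clean identity $\Psi^*(\mathcal U) = \frac{\Psi(s)}{v\,f(s)}$, which falls out of inserting $\mathcal F = -s/f$, $\mathcal U = 1/f$ and $v^* = (f^\pm - v s^\pm)/v$ into the definition of $\Psi^*$ and simplifying (the constant term $\mathcal F(\mathcal U^-,\zeta) - v^*\mathcal U^-$ collapses to $-1/v$). Since $v\,f(s) > 0$ for $s > 0$, the functions $\Psi^*$ and $\Psi$ share the same sign and hence the same zero set, giving the E-condition equivalence at once.

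The main obstacle, and the reason for the qualifying clause about $\mathcal U^- = +\infty$, is the degenerate boundary case $s^+ = 0$. There $f^+ = 0$, so the cancellation of $f^+$ in the right inequality is illegitimate and $\mathcal U^- = +\infty$; I would handle this case by hand using the limits in ($\mathcal F$1)--($\mathcal F$2). By definition $v^* = 0$, while $\lim_{\mathcal U\to\infty}\mathcal F_{\mathcal U}(\mathcal U,\zeta) = 0$, so the right Lagrange inequality is \emph{always} an equality here, whereas the corresponding original inequality $f_s(0,c) = 0 \le v$ is strict because $v > 0$. This asymmetry is exactly why equality cannot be transported across $s^+ = 0$. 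Checking the left inequality with $v = f^-/s^-$ shows $\mathcal F_{\mathcal U}(\mathcal U^+,\zeta)\le 0$, with equality precisely when $v = f_s(s^-,c)$, i.e. when the original condition $v\le f_s(s^-,c)$ is saturated. Consequently both Lagrange signs can be equalities simultaneously, but only in this degenerate configuration; in the finite case the two equalities would force both original Lax inequalities to be tight at once, which the original condition forbids. Assembling the finite and degenerate cases completes both the Lax and the Oleinik equivalences.
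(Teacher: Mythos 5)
Your proof is correct, and a direct comparison is not really possible here: the paper does not prove Lemma~\ref{lemma_Oleinik_equivalence} at all, it imports it verbatim from \cite{MR2024}, so the proposal can only be judged on its own merits. Judged so, it holds up. The key identities all check: $\mathcal F_{\mathcal U}(\mathcal U,\zeta) = f/f_s - s$ follows from $\mathcal F = -s\,\mathcal U$, $\mathcal U = 1/f$; the Rankine--Hugoniot relation $v[s]=[f]$ gives $v^* = (f^\pm - v s^\pm)/v$ in the finite case; with the $\pm$-swap of \eqref{shock_mapping}, the constant $-s^\pm$ cancels on both sides of each Lagrange inequality, and dividing by $f^\pm>0$ turns them into the two original Lax inequalities with roles interchanged; and the identity $\Psi^*(\mathcal U) = \Psi(s)/\bigl(v f(s)\bigr)$ (whose constant term indeed collapses to $-1/v$) gives sign, hence zero-set, correspondence, which together with the remark in Sect.~\ref{sec:sec2-s-shocks} settles the E-condition. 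Your analysis of $s^+=0$, where $f_s(0,c)=0<v$ is strict in the original coordinates while $\mathcal F_{\mathcal U}(\mathcal U^-,\zeta)=v^*=0$ is forced to equality, is exactly the right explanation of the one-sided equality clause. The only point worth tightening is the Oleinik part in this degenerate case: the paper's definition of $\Psi^*$ anchors it at $\mathcal U^-$, which is $+\infty$ there and makes $\mathcal F(\mathcal U^-,\zeta)$ infinite as well; you should note that anchoring $\Psi^*$ at the finite state $\mathcal U^+$ instead (legitimate, since both states lie on the chord of slope $v^*$ whenever both are finite, and $v^*=(f^+-vs^+)/v=0$ remains consistent) preserves the identity $\Psi^*=\Psi/(vf)$, so the E-condition equivalence also covers shocks with $s^+=0$.
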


\subsection{Admissibility of $\zeta$-shocks}

Recall that due to Proposition \ref{prop:inadmissible_shocks} (and also Proposition \ref{prop:c-shock-admissibility}) for an admissible shock with different values of $c$ we must have $c^- > c^+$ and thus $\zeta^- < \zeta^+$. Therefore, due to the concavity of the function $a$ we always have Lax condition for the equation \eqref{eq:c-lagr-eqn}.
\begin{proposition}[Proposition 5.7, \cite{MR2024}]
For any admissible $\zeta$-shock we have Lax condition
\[
a_\zeta(\zeta^+) < \dfrac{a(\zeta^-)-a(\zeta^+)}{\zeta^--\zeta^+} < a_\zeta(\zeta^-).
\]
Moreover, 
\[
\dfrac{a(\zeta)-a(\zeta^-)}{\zeta-\zeta^-} > \dfrac{a(\zeta^-)-a(\zeta^+)}{\zeta^--\zeta^+} \quad \text{ for all } \zeta\in(\zeta^-, \zeta^+),
\]
therefore, Oleinik's E-condition also holds.
\end{proposition}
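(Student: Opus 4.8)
The plan is to reduce both claims to the elementary geometry of the strictly concave flux $a$, using only (A2)--(A3) together with the ordering $\zeta^- < \zeta^+$ already recorded above for every admissible $\zeta$-shock. Write $\sigma = \dfrac{a(\zeta^-)-a(\zeta^+)}{\zeta^--\zeta^+} = \dfrac{a(\zeta^+)-a(\zeta^-)}{\zeta^+-\zeta^-}$ for the chord slope of $a$ between the endpoints; by the Rankine--Hugoniot condition \eqref{eq:RH-2} this $\sigma$ is exactly the shock velocity $v^*$ of the scalar law \eqref{eq:c-lagr-eqn}. Both the Lax inequalities and the E-condition then amount to comparing $\sigma$ with the endpoint derivatives $a_\zeta(\zeta^\pm)$ and with intermediate chord slopes, i.e. to the strict ``three-chords'' characterization of strict concavity.

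The core computation is the strict monotonicity of chord slopes. Fixing the left endpoint, set $g(\zeta) = \dfrac{a(\zeta)-a(\zeta^-)}{\zeta-\zeta^-}$ on $(\zeta^-, \zeta^+]$. Differentiating, the sign of $g_\zeta$ is that of $a_\zeta(\zeta)(\zeta-\zeta^-) - \big(a(\zeta)-a(\zeta^-)\big)$; applying the mean value theorem to write $a(\zeta)-a(\zeta^-) = a_\zeta(\eta)(\zeta-\zeta^-)$ with $\eta \in (\zeta^-, \zeta)$ and using that $a_\zeta$ is strictly decreasing by (A3), this quantity equals $\big(a_\zeta(\zeta)-a_\zeta(\eta)\big)(\zeta-\zeta^-) < 0$, so $g$ is strictly decreasing. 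Since $g(\zeta^+) = \sigma$, strict monotonicity immediately gives $g(\zeta) > \sigma$ for all $\zeta \in (\zeta^-, \zeta^+)$, which is precisely Oleinik's E-condition; and since $g(\zeta) \to a_\zeta(\zeta^-)$ as $\zeta \to \zeta^-+0$, the same monotonicity yields $\sigma = g(\zeta^+) < a_\zeta(\zeta^-)$, the right-hand Lax inequality.

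The left-hand Lax inequality follows by the symmetric argument with the right endpoint fixed: setting $h(\zeta) = \dfrac{a(\zeta^+)-a(\zeta)}{\zeta^+-\zeta}$ one checks in the same way that $h$ is strictly decreasing, with $h(\zeta^-) = \sigma$ and $h(\zeta) \to a_\zeta(\zeta^+)$ as $\zeta \to \zeta^+-0$; hence $\sigma = h(\zeta^-) > a_\zeta(\zeta^+)$. Combining the two gives $a_\zeta(\zeta^+) < \sigma < a_\zeta(\zeta^-)$, the full Lax condition.

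There is no substantive obstacle here: the statement is just the strict secant-slope characterization of strict concavity, specialized to the conservation law \eqref{eq:c-lagr-eqn} with concave flux. The only point deserving care is keeping every inequality strict, and this is exactly what the \emph{strict} concavity $a_{\zeta\zeta} < 0$ of (A3) buys (a merely concave $a$ would give only $\leqslant$/$\geqslant$), together with the nondegeneracy $\zeta^- \neq \zeta^+$ of a genuine $\zeta$-shock, which makes the chord and the interval $(\zeta^-,\zeta^+)$ nontrivial.
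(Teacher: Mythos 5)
Your proposal is correct and follows essentially the same route as the paper, which derives the proposition from exactly the two ingredients you use: the ordering $\zeta^- < \zeta^+$ for admissible $\zeta$-shocks (inherited from Proposition~\ref{prop:inadmissible_shocks} via the shock mapping) and the strict concavity (A3) of $a$, the Lax and Oleinik inequalities being the standard strict chord-slope facts for a strictly concave flux. The paper merely asserts this and cites \cite{MR2024}; your write-up just makes the elementary secant-monotonicity argument explicit, with all strict inequalities correctly justified.
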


To verify the admissibility of $\zeta$-shocks in the first equation, we utilize the mapping provided in Sect.~\ref{sec:sec5-shock-mapping} and check the admissibility with Proposition~\ref{prop:c-shock-admissibility}.

\section{Slug injection solution for $\zeta$}
\label{sec:c-sol}

After passing to the Lagrangian coordinates, in view of Proposition \ref{prop:c_on_zero_boundary}
the initial and boundary conditions for $\zeta$ are the same as they were for $c$:
\begin{equation}
    \label{eq:c-init-and-bound-cond}
    \zeta(0, x)=0,\qquad \zeta(\varphi,0)=\begin{cases}
    1,\qquad \varphi\in(0,t_{inj}],\\
    0,\qquad \varphi\in(t_{inj},+\infty).
    \end{cases}
\end{equation}

Since the equation \eqref{eq:c-lagr-eqn} does not depend on $\mathcal U$, we can solve it separately from~\eqref{eq:U-lagr-eqn}. 
The $\zeta$-characteristics are straight lines satisfying the equation
\begin{equation*} 
    \varphi(x) = \varphi_0 + a_\zeta(\zeta(\varphi_0, x_0)) (x - x_0).
\end{equation*} 
Along each of these lines the value of $\zeta$ is a constant: 
$$\zeta(\varphi(x), x) = \zeta(\varphi_0, x_0).$$
The function $a_\zeta$ is positive and strictly decreasing on $[0, 1]$ due to (A2)--(A3), so it is easy to describe the solution of \eqref{eq:c-lagr-eqn} in detail.
\begin{figure}[H]
    \begin{center}
    \includegraphics[width=0.8\linewidth]{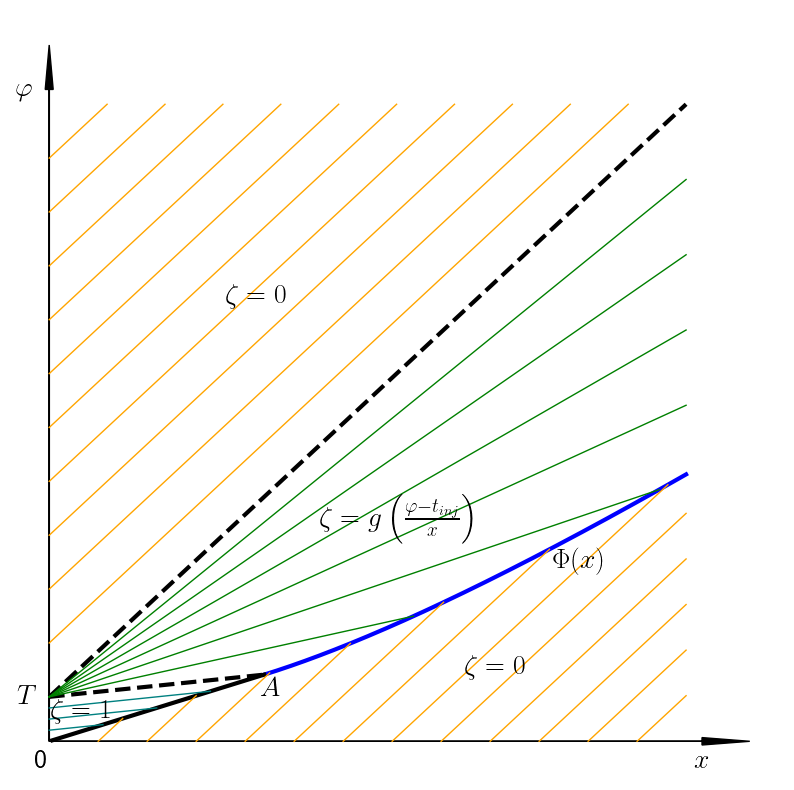}
    \end{center}
    \caption{Characteristics of the $\zeta$-solution.}
    \label{fig:c_solution_scheme}
\end{figure}
We use the notation
$$v(\zeta_1, \zeta_2) = \frac{a(\zeta_1) - a(\zeta_2)}{\zeta_1 - \zeta_2}.$$
While $\varphi < t_{inj}$, the solution of the slug injection problem coincides with the solution of the Riemann problem. The initial-boundary conditions \eqref{eq:c-init-and-bound-cond} guarantee that the origin is a discontinuity point with $(\zeta^+, \zeta^-) = (1, 0)$. Hence, there is a $\zeta$-shock starting from the origin and moving with the velocity $v(1, 0)$ due to the Rankine--Hugoniot condition \eqref{eq:RH-2} hereinafter referred to as the \emph{chemical shock front}.
The slope of characteristics below the shock is~$a_\zeta(0)$ and the slope above the shock is $a_\zeta(1)$ up to a certain point.
The upper characteristic with the slope equal to $a_\zeta(1)$ starts from the point~$T = (t_{inj}, 0)$ and satisfies the equation
$\varphi(x) = t_{inj} + a_\zeta(1)x$.
The characteristics above it corresponds to a rarefaction wave and satisfy $\varphi = t_{inj} + a_\zeta(\zeta)x$, $\zeta\in[0,1)$.
So if we introduce an inverse function
\begin{equation}\label{eq:g_definition}
g = (a_\zeta)^{-1},
\end{equation}
we can express $\zeta$ within the conical domain of the rarefaction wave before the characteristics reach the chemical shock front as follows:
\begin{equation}\label{eq:zeta_in_cone_definition}
\zeta(\varphi, x) = g\left(\frac{\varphi-t_{inj}}x\right).
\end{equation}

It is clear that a part of the chemical shock front shock near the origin is a segment with the slope $v(1, 0)$. Denote by $A = (\varphi_A, x_A)$ the right end of this segment.
The point $A$ is an endpoint of the $\zeta$-characteristic starting from $T$ carrying the value $\zeta=1$, and thus having the slope $a_\zeta(1)$ (the blue segment on Fig. \ref{fig:c_solution_scheme}). 
We obtain the coordinates of $A$ as the intersection point of two lines ($\varphi = v(1, 0)x$ and $\varphi = t_{inj} + a_\zeta(1)x$):
\begin{equation}
\label{eq:x_A_phi_A}
x_A = \frac{t_{inj}}{v(1, 0) - a_\zeta(1)},\qquad 
\varphi_A = \frac{v(1, 0)t_{inj}}{v(1, 0)-a_\zeta(1)}.
\end{equation}
Denote by $\Phi(x)$ the curved part of the chemical front shock to the right of $x_A$. This part of the shock corresponds to the trajectory of slug front after the maximum concentration becomes less than $1$. Due to \eqref{eq:zeta_in_cone_definition} we have $\zeta(\Phi(x)+0, x) = g\left(\frac{\Phi(x)-t_{inj}}{x}\right)$ above the shock and $\zeta(\Phi(x)-0, x) = 0$ below the shock. Therefore, the Rankine--Hugoniot condition implies that the curve $\Phi$ satisfies the differential equation 
\begin{equation}
\label{eq:Phi_equation}
\frac{d\Phi(x)}{dx} = v\left(g\left(\frac{\Phi(x)-t_{inj}}x\right), 0\right),\quad x>x_A
\end{equation}
with the initial condition $\Phi(x_A) = \varphi_A$. This differential equation was previously solved analytically in some specific cases, e.g. for $a$ given by the Langmuir curve. But it can be integrated even for a general concave $a$. 

\begin{lemma}
\label{lemma_explicit_Phi}
Denote $p(\zeta) = a(\zeta)-\zeta a_\zeta(\zeta)$ and $q = p^{-1}$ its inverse function. Then
\[
\Phi(x) = t_{inj} + a_\zeta\Big(q\Big(\dfrac{t_{inj}}{x}\Big)\Big) x.
\]
\end{lemma}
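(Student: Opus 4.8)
The plan is to solve \eqref{eq:Phi_equation} by reparametrizing the shock curve by the value $\zeta$ that the rarefaction fan deposits on it from above, rather than by $\Phi$ directly. First I would simplify the right-hand side: since $a(0)=0$, the definition of $v$ gives $v(\zeta,0)=a(\zeta)/\zeta$, so with $\zeta(x):=g\bigl((\Phi(x)-t_{inj})/x\bigr)$ the equation reads $\Phi'=a(\zeta)/\zeta$. Because $g=(a_\zeta)^{-1}$, the relation defining $\zeta$ is equivalent to $a_\zeta(\zeta)=(\Phi-t_{inj})/x$, i.e. $\Phi = t_{inj}+x\,a_\zeta(\zeta)$. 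This already exhibits the claimed shape of the answer, so it remains only to identify $\zeta$ as a function of $x$.

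Next I would differentiate $\Phi = t_{inj}+x\,a_\zeta(\zeta)$ in $x$ and substitute $\Phi'=a(\zeta)/\zeta$, obtaining $x\,a_{\zeta\zeta}(\zeta)\,\zeta' = a(\zeta)/\zeta - a_\zeta(\zeta) = p(\zeta)/\zeta$. The key algebraic observation is that differentiating $p(\zeta)=a(\zeta)-\zeta a_\zeta(\zeta)$ gives $p'(\zeta)=-\zeta\,a_{\zeta\zeta}(\zeta)$; feeding this in turns the equation into $-x\,p'(\zeta)\zeta' = p(\zeta)$, that is $x\,\frac{d}{dx}p(\zeta(x)) = -p(\zeta(x))$. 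This is separable and integrates to $x\,p(\zeta(x)) = \mathrm{const}$. Note in passing that $p'(\zeta)=-\zeta a_{\zeta\zeta}(\zeta)>0$ on $(0,1]$ by (A3), so $p$ is strictly increasing and $q=p^{-1}$ is well defined, which is what makes the parametrization legitimate.

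Finally I would pin down the constant from the initial condition $\Phi(x_A)=\varphi_A$. At $x_A$ the characteristic reaching the shock is the one from $T$ carrying $\zeta=1$, so $\zeta(x_A)=1$ and the constant equals $x_A\,p(1)$. Using $v(1,0)=a(1)$ and the formula \eqref{eq:x_A_phi_A}, one checks $v(1,0)-a_\zeta(1)=a(1)-a_\zeta(1)=p(1)$, whence $x_A\,p(1)=t_{inj}$. Therefore $p(\zeta(x))=t_{inj}/x$, i.e. $\zeta(x)=q(t_{inj}/x)$, and substituting back into $\Phi=t_{inj}+x\,a_\zeta(\zeta)$ yields the asserted formula. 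I expect the only real obstacle to be spotting the substitution together with the identity $p'=-\zeta a_{\zeta\zeta}$ that makes the equation separable; after that the computation and the matching of the constant are routine.
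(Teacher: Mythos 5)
Your proposal is correct and follows essentially the same route as the paper: the same reparametrization of the shock by $\zeta_\Phi(x) = g\bigl((\Phi(x)-t_{inj})/x\bigr)$, the same separable ODE made integrable by the identity $p_\zeta = -\zeta a_{\zeta\zeta}$, and the same matching of the constant via $x_A\,p(1) = t_{inj}$. The only cosmetic difference is that you obtain the ODE by differentiating $\Phi = t_{inj} + x\,a_\zeta(\zeta)$ instead of differentiating $\zeta_\Phi$ directly, which is an equivalent computation.
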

\begin{proof}
Denote $\zeta_\Phi(x) = g\left(\frac{\Phi(x)-t_{inj}}x\right)$. Using the definition \eqref{eq:g_definition} of the function $g$ and \eqref{eq:Phi_equation}, we derive
\[
\dfrac{d}{dx}\zeta_\Phi(x) = \dfrac{p(\zeta_\Phi(x))}{x\zeta_\Phi(x)a_{\zeta\zeta}(\zeta_\Phi(x))}.
\]
This equation splits and integrates trivially.
\[
\int\limits_{\zeta_\Phi(x)}^1 \dfrac{-ra_{\zeta\zeta}(r)}{p(r)}\,dr = \ln\dfrac{x}{x_A}.
\]
Note that $p_\zeta(\zeta) = -\zeta a_{\zeta\zeta}(\zeta)$, so the integral in the left-hand side can also be calculated explicitly after changing the variable (note also that the derivative is positive, therefore the inverse function $q$ exists). Thus, we arrive at
\[
\dfrac{p(1)}{p(\zeta_\Phi(x))} = \dfrac{x}{x_A}.
\]
Note also that by the definition \eqref{eq:x_A_phi_A} of $x_A$ we have $x_A p(1) = t_{inj}$, therefore we obtain
\[
\zeta_\Phi(x) = q\left(\dfrac{t_{inj}}{x}\right),
\]
and the assertion of this lemma follows immediately.
\end{proof}

The resulting solution for $\zeta$ is illustrated on Fig.~\ref{fig:c_solution_scheme}.

\section{Slug injection solution for $\mathcal U$}
\label{sec:solution-U}

As was stated previously, when $\varphi < t_{inj}$, a slug injection solution coincides with the solution of the Riemann problem. Due to \cite{JnW}, the $\mathcal U$-solution of the Riemann problem is a combination of rarefaction waves and shocks. It is clear that 
any $\zeta$-shock is also a discontinuity in $\mathcal U$. Otherwise, if we have $\mathcal U^+ = \mathcal U^-$ on the chemical shock, then due to Sect.~\ref{sec:sec5-shock-mapping} we have $f(s^+, c^+) = f(s^-, c^-)$ in original coordinates, thus from \eqref{eq:RH-1} we obtain $v(s^+ - s^-) = 0$, which contradicts Proposition \ref{prop:inadmissible_shocks}, since both $s^+ = s^-$ and $v=0$ correspond to inadmissible shocks. Hereinafter by $\mathcal U^{\pm}_{OA}$ we denote the constant values of $\mathcal U$ above and below the straight segment of the chemical front (the segment connecting $0$ and $A$ on Fig.~\ref{fig:c_solution_scheme}) respectively. 
Due to \cite{JnW} there is a rarefaction wave that connects the value $\mathcal U^+_{OA}$ on the chemical front and the boundary value $\mathcal U = 1$ on the vertical axis between $0$ and $t_{inj}$.
A pencil of $\mathcal U$-characteristics thus starts at the origin and fills the triangle 
$$\bigtriangleup_O = \{(\varphi, x)\colon x > 0,\ \varphi > v(1, 0) x,\ \varphi < t_{inj} + a_\zeta(1) x\}.$$ 
Each of the characteristics in the triangle is a line that carries a constant value of $\mathcal U\in [1, \mathcal U^+_{OA}]$ and has the slope equal to $\mathcal F_{\mathcal U}(\mathcal U, 1)$. 
Since the lowest characteristic coincides with the chemical front shock, the corresponding value $\mathcal U^+_{OA}$ can be found using the formula 
\begin{equation}
\label{eq:U+_cf}
\mathcal F_{\mathcal U}(\mathcal U^+_{OA}, 1) = v(1, 0).
\end{equation}
Due to ($\mathcal F$3) in Proposition \ref{prop:new-flow-function-properties} the function $\mathcal F_{\mathcal U}(\cdot, 1)$ strictly decreases on $[1, \mathcal U^+_{OA}]$ and thus has an inverse finction
\[\mathcal G_\bigtriangleup = \mathcal F_{\mathcal U}^{-1}(\cdot, 1).\]
Hence, we have an expression for $\mathcal U$ in $\bigtriangleup_O$:
\begin{equation} \label{eq:U_val_pencile}
\mathcal U = \mathcal G_\bigtriangleup\left(\frac\varphi x\right),\qquad (\varphi, x)\in \bigtriangleup_O.
\end{equation}
The Rankine--Hugoniot condition \eqref{eq:RH-2} applied to the chemical shock front defines $\mathcal U_{OA}^-$ implicitly:
\[
\frac{\mathcal F(\mathcal U^+_{OA}, 1) - \mathcal F(\mathcal U^-_{OA}, 0)}{\mathcal U^+_{OA} - \mathcal U^-_{OA}} = v(1, 0).
\]
This equation has two solutions for $\mathcal U_{OA}^-$, but we choose the one satisfying the speed compatibility property for a valid self-similar Riemann problem solution:
\begin{equation}
\label{inq:U_Lax_cond}
    \mathcal U_{OA}^+ < \mathcal U_{OA}^-.
\end{equation}

Below this segment of the shock there are parallel straight characteristics with the slope $\mathcal F_{\mathcal U}(\mathcal U^-_{OA}, 0)$. 
We have to consider two cases: whether these characteristics arrive at the horizontal axis or not (and thus if an additional rarefaction wave above the $x$-axis exist or not). It depends on the sign of $\mathcal F_{\mathcal U}(\mathcal U^{-}_{OA},0)$ or equivalently due to  ($\mathcal F 2$) of Proposition \ref{prop:new-flow-function-properties} the sign of $\mathcal U^-_{OA} - \mathcal U^{\max}(0)$.
If $\mathcal U^-_{OA} < \mathcal U^{\max}(0)$,
the parallel characteristics carry the value $\mathcal U^-_{OA}$ along them from the straight segment of the chemical front shock to the $x$-axis.
If $\mathcal U^-_{OA} > \mathcal U^{\max}(0)$, there is a rarefaction wave formed by $\mathcal U$-characteristics starting at the origin with slopes in $[0, \mathcal F_{\mathcal U}(\mathcal U^-, 0)]$. Along each characteristic of this pencil the quantity $\mathcal U\in [\mathcal U^-_{OA}, \mathcal U^{\max}(0)]$ is constant. 
The case $\mathcal U^-_{OA} = \mathcal U^{\max}(0)$ corresponds to a degenerate rarefaction wave.

Let's prove that the parallel $\mathcal U$-characteristics below the the chemical front shock starting from the straight segment do not intersect the curve $\Phi$.
Substituting the coordinates of the point $A$, given by \eqref{eq:x_A_phi_A}, into the right side of the equation \eqref{eq:Phi_equation} and using 
\[
\frac{\varphi_A - t_{inj}}{x_A} = a_\zeta(1),
\]
we obtain an equality between the slope of $\Phi$ at $A$ and the slope of the straight part of the chemical front shock:
\begin{equation}
\label{eq:dPhi_dx_at_A}
\frac{d\Phi}{d x}(x_A) = v(1, 0).
\end{equation}
Due to the chain rule the second derivative of $\Phi$ with respect to $x$ is as follows
\begin{equation*}
    \frac{d^2\Phi(x)}{d x^2} = 
    x^{-1}\left.\frac{d}{d\zeta}\frac{a(\zeta)}\zeta\right|_{\zeta = g\left(\frac{\Phi(x) - t_{inj}}x\right)}
    g'\left(\frac{\Phi(x) - t_{inj}}x\right)\left(\frac{d\Phi(x)}{dx} - \frac{\Phi(x) - t_{inj}}x\right).
\end{equation*}
Let us show that each multiplier of the right hand side 
has a sign.
We have
\begin{equation}
\label{eq:shock_front_deriv_zeta}
    \frac{d}{d\zeta} \frac{a(\zeta)}\zeta = \frac{a_\zeta(\zeta)}{\zeta} - \frac{a(\zeta)}{\zeta^2}.
\end{equation}
Properties (A1) and (A3) guarantee that 
\begin{equation}
\label{inq:ads_conv}
    \frac{a(\zeta)}\zeta -  a_\zeta(\zeta) > 0
\end{equation}
 for all $\zeta > 0$.
Hence, for all $\zeta \in (0, 1]$ the following inequality holds:
\begin{equation}
\label{inq:ads_conv2}
\frac{d}{d\zeta}\frac{a(\zeta)}\zeta < 0.
\end{equation}
Property (A3) and the definition of $g$ result in the inequality
\begin{equation}
\label{ineq:g_monoton}
g'(r) < 0\quad \text{for all}\quad r \in [a_\zeta(1), a_\zeta(0)].
\end{equation}
Now all we need is to prove that all $\zeta$-characteristics intersect with $\Phi$ transversely, that is for all $x\geq x_A$ the following inequality holds true
\begin{equation}
\label{ineq:transverse_Phi}
    \frac{d\Phi(x)}{dx} > \frac{\Phi(x) - t_{inj}}x.
\end{equation}
Let us reintroduce the notation $\zeta_\Phi(x) = g\left(\frac{\Phi(x) - t_{inj}}{x}\right)$ from Lemma \ref{lemma_explicit_Phi}.
Using \eqref{eq:Phi_equation}  and  \eqref{inq:ads_conv}, we obtain 
\[
\frac{d\Phi(x)}{dx} = \frac{a(\zeta_\Phi(x))}{\zeta_\Phi(x)} > a_\zeta(\zeta_\Phi(x)) = \frac{\Phi(x) - t_{inj}}x,
\]
which gives us the desired inequality \eqref{ineq:transverse_Phi}.
Finally, combining inequalities \eqref{inq:ads_conv2}, \eqref{ineq:g_monoton} and  \eqref{ineq:transverse_Phi}, we arrive at
\begin{equation*}
    \frac{d^2\Phi(x)}{d x^2} > 0.
\end{equation*}
Hence, the slope of $\Phi(x)$ is greater than $v(1, 0)$ at every $x \geq x_A$.

Due to ($\mathcal F$3) of Proposition \ref{prop:new-flow-function-properties} the function $\mathcal F(\cdot, 0)$ is concave on $[1, \mathcal U^{\max}(0)]$ and $\mathcal F_{\mathcal U}(\mathcal U_{OA}^-,0) < 0$ for $\mathcal U>\mathcal U^{\max}(0)$. Thus, using \eqref{eq:U+_cf} and \eqref{inq:U_Lax_cond} we derive
the following inequality between the slope of the straight segment of the chemical front shock and the slope of the parallel $\mathcal U$-characteristics starting from this segment
\[
\mathcal F_{\mathcal U}(\mathcal U_{OA}^-,0) < \mathcal F_{\mathcal U}(\mathcal U_{OA}^+,1) = v(1,0).
\]
Therefore, the slope of the parallel characteristics is less than $v(1, 0)$, and thus, the parallel characteristics go below the curve $\Phi$ and do not intersect it.

\subsection{Characteristics in the cone and Joguet condition}

After applying the chain rule to the second term of \eqref{eq:U-lagr-eqn}
we obtain
\begin{equation}
    \label{eq:U-lagr-eqn2}
    \mathcal U_x + \mathcal F_{\mathcal U}(\mathcal U, \zeta)\mathcal U_\varphi + \mathcal F_\zeta(\mathcal U, \zeta) \zeta_\varphi = 0.
\end{equation}
If we consider a characteristic $\varphi = \varphi(x)$ with the slope $\mathcal{F}_\mathcal{U}(\mathcal{U},\zeta)$, we find that the two first terms in \eqref{eq:U-lagr-eqn2} form the differential 
\[
\frac{d}{dx}\mathcal{U}(\varphi(x),x) = \mathcal U_x(\varphi(x),x) + \mathcal F_{\mathcal U}(\mathcal U(\varphi(x),x), \zeta(\varphi(x),x))\mathcal U_\varphi(\varphi(x),x).
\]
Thus, the system of equations describing $\mathcal U$-characteristics $\varphi$ is as follows:
\begin{align} 
    \label{eq:U-char-phi}
    &\dfrac{d}{dx}\varphi(x) = \mathcal{F}_\mathcal{U}(\mathcal{U}(\varphi(x),x),\zeta(\varphi(x),x)),\\
    \label{eq:U-char-U}
    &\frac{d}{dx}\mathcal{U}(\varphi(x),x)= 
    -\mathcal F_\zeta\big(\mathcal{U}(\varphi(x),x),\zeta(\varphi(x),x)\big)
    \zeta_\varphi(\varphi(x),x).
\end{align}

Consider the solution $\zeta$ obtained in Section \ref{sec:c-sol}.
Substituting it into the system~\eqref{eq:U-char-phi}-\eqref{eq:U-char-U}, we want to construct $\mathcal U$-characteristics 
above the chemical shock front. 
A significant portion of that area can be covered with the extensions of the straight characteristics we constructed inside $\bigtriangleup_O$. However, as will become clear later, there is always an area with sufficiently large $x$ that such characteristics do not cover.
Therefore, we need some additional condition to be imposed on $\mathcal U$ somewhere in that area in order to construct the full solution. The proposed condition is dictated by the Jouguet principle, which states that at the chemical shock front the slope of the characteristic given by \eqref{eq:U-char-phi} should be equal to the slope of the shock front $\Phi$ given by \eqref{eq:Phi_equation}, \eqref{eq:zeta_in_cone_definition}.
\begin{definition}
The initial value $\mathcal U$ on $\Phi$ obtained via the \emph{Joguet condition} satisfies the equation
\begin{equation}
\label{eq:Joguet_cond} 
\mathcal F_{\mathcal U}(\mathcal U(\Phi(x), x), \zeta(\Phi(x), x)) = \frac{a(\zeta(\Phi(x), x))}{\zeta(\Phi(x), x)}.
\end{equation} 
\end{definition}

\begin{lemma}
\label{lemma:transverse_intersect}
Consider the cone 
\begin{equation*}
\angle_{t_{inj}} = \{(\varphi, x)\colon a_\zeta(1)x + t_{inj} \leqslant \varphi \leqslant a_\zeta(0)x + t_{inj}; \varphi \geqslant \Phi(x) \text{ for } x>x_A\}.
\end{equation*}
The $\mathcal U$-characteristics in $\angle_{t_{inj}}$ (either extended from $\bigtriangleup_O$ up or constructed via the Jouguet condition) intersect $\zeta$-characteristics transversely.
\end{lemma}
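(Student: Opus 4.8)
The plan is to read the claim as a statement of strict hyperbolicity. The two families are exactly the two characteristic families of \eqref{eq:U-lagr-eqn}--\eqref{eq:c-lagr-eqn}: the slope of a $\mathcal U$-characteristic is $\lambda_1:=\mathcal F_{\mathcal U}(\mathcal U,\zeta)$ by \eqref{eq:U-char-phi}, while the slope of a $\zeta$-characteristic in the rarefaction cone is $\lambda_2:=a_\zeta(\zeta)$. A crossing of the two is transverse exactly when $\lambda_1\ne\lambda_2$, so it suffices to show $\lambda_1\ne\lambda_2$ at every point of $\angle_{t_{inj}}$. I would in fact prove the sharper inequality $D:=\lambda_1-\lambda_2>0$ throughout the cone, which also records that the $\mathcal U$-characteristics are the steeper family.

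First I would verify that $D>0$ at the point where each $\mathcal U$-characteristic enters the cone. For a characteristic issued by the Jouguet condition the entry is on $\Phi$, where \eqref{eq:Joguet_cond} gives $\lambda_1=a(\zeta)/\zeta$, and \eqref{inq:ads_conv} yields $a(\zeta)/\zeta>a_\zeta(\zeta)=\lambda_2$, hence $D>0$. For a characteristic extended up from $\bigtriangleup_O$ the entry is on the lower edge $\{\zeta=1\}$ of the cone, where it carries a value $\mathcal U\in[1,\mathcal U^+_{OA}]$; since $\mathcal F_{\mathcal U}(\cdot,1)$ strictly decreases on this range, $\lambda_1=\mathcal F_{\mathcal U}(\mathcal U,1)\ge\mathcal F_{\mathcal U}(\mathcal U^+_{OA},1)=v(1,0)$, and $v(1,0)>a_\zeta(1)=\lambda_2$ (this is precisely what makes $x_A$ in \eqref{eq:x_A_phi_A} positive), so again $D>0$.

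The core step is a first-zero argument along one $\mathcal U$-characteristic $\varphi=\varphi(x)$. Writing $h(x)=(\varphi(x)-t_{inj})/x$, inside the cone one has $\zeta=g(h)$ and therefore $\lambda_2=a_\zeta(\zeta)=h$; differentiating gives $h'=x^{-1}(\varphi'-h)=x^{-1}(\lambda_1-\lambda_2)=D/x$, so $\lambda_2'=D/x$ and $\zeta'=g'(h)\,D/x$ along the characteristic. From \eqref{eq:U-char-U}, $\mathcal U'=-\mathcal F_\zeta\,\zeta_\varphi$ with $\zeta_\varphi=g'(h)/x<0$ (as $g'<0$ by (A3)) and $\mathcal F_\zeta<0$ by $(\mathcal F4)$, so $\mathcal U'<0$. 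Suppose $D$ reached $0$ and let $x_*$ be the first such point, so that $D(x_*)=0$ and $D'(x_*)\le0$. At $x_*$ both $\zeta'(x_*)=g'(h)D/x_*=0$ and $\lambda_2'(x_*)=0$, whence $D'(x_*)=\lambda_1'(x_*)=\mathcal F_{\mathcal U\mathcal U}\,\mathcal U'+\mathcal F_{\mathcal U\zeta}\,\zeta'=\mathcal F_{\mathcal U\mathcal U}\,\mathcal U'(x_*)$. Since $\mathcal F_{\mathcal U}=\lambda_2=a_\zeta(\zeta)>0$ at $x_*$, property $(\mathcal F3)$ forces $\mathcal F_{\mathcal U\mathcal U}<0$, and combined with $\mathcal U'<0$ this gives $D'(x_*)>0$, contradicting $D'(x_*)\le0$. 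Hence $D>0$ throughout the cone and the two families meet transversely.

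The hard part is precisely this cancellation at $x_*$: the cross term $\mathcal F_{\mathcal U\zeta}\,\zeta'$ in $\lambda_1'$ drops out because $\zeta'$ is proportional to $D$, leaving only the sign-definite term $\mathcal F_{\mathcal U\mathcal U}\,\mathcal U'$, and getting all three ingredients ($\mathcal F_{\mathcal U\mathcal U}<0$ from $(\mathcal F3)$, $\mathcal U'<0$ from $(\mathcal F4)$ together with $g'<0$, and $\lambda_2'=D/x$) to line up is the crux of the bookkeeping. One caveat I would flag is the behaviour as $\zeta\to0$ (equivalently $x\to\infty$ along $\Phi$), where $a(\zeta)/\zeta\to a_\zeta(0)$ and the initial gap $D$ on $\Phi$ tends to $0$; the argument still gives $D>0$ for every finite $x$ with $\zeta>0$, so transversality is strict on all of $\angle_{t_{inj}}$ away from the limiting ray $\zeta=0$.
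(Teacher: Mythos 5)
Your proof is correct and takes essentially the same route as the paper's: the paper also argues at a first tangency point $x_0$, where $\frac{d}{dx}\zeta(\varphi(x),x)=0$ kills the cross term $\mathcal F_{\mathcal U\zeta}\,\zeta'$ and leaves $\varphi_{xx}(x_0)=-\mathcal F_{\mathcal U\mathcal U}\mathcal F_\zeta\zeta_\varphi>0$, contradicting tangency from below --- exactly your computation $D'(x_*)=\mathcal F_{\mathcal U\mathcal U}\,\mathcal U'>0$ with the same sign inputs from ($\mathcal F$3), ($\mathcal F$4) and (A3). Your explicit verification that $D>0$ at the entry points (on $\Phi$ via \eqref{inq:ads_conv}, on $\zeta=1$ via \eqref{eq:U+_cf}) is a detail the paper leaves implicit in the phrase ``starts transversely,'' but it does not change the approach.
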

\begin{proof}
Let $\varphi(x)$ be a $\mathcal U$-cha\-rac\-te\-ris\-tic that starts transversely at the lower bound of $\angle_{t_{inj}}$, but touches a $\zeta$-characteristic at some point $x_0$ for the first time. That means 
\begin{equation}\label{eq:varphi_x_touches_c_char}
\varphi_x(x_0) = \dfrac{\varphi(x_0) - t_{inj}}{x_0}.
\end{equation}
Due to \eqref{eq:zeta_derivatives} we have
\[
\dfrac{\varphi(x_0) - t_{inj}}{x_0} = -\dfrac{\zeta_x(\varphi(x_0), x_0)}{\zeta_\varphi(\varphi(x_0), x_0)},
\]
and therefore we trivially have
\[
\left.\dfrac{d}{dx} \zeta(\varphi(x), x) \right|_{x=x_0} = \zeta_x(\varphi(x_0), x_0) + \zeta_\varphi(\varphi(x_0), x_0)\varphi_x(x_0) = 0.
\]
Now, if we consider
\begin{align*}
\varphi_{xx}(x_0) = \left.\dfrac{d}{dx} \mathcal F_{\mathcal U}(\mathcal U, \zeta) \right|_{x=x_0} &=
\left. \mathcal F_{\mathcal U\mathcal U}(\mathcal U, \zeta) \dfrac{d}{dx} \mathcal U\right|_{x=x_0} + \left.\mathcal F_{\mathcal U\zeta}(\mathcal U, \zeta) \dfrac{d}{dx} \zeta \right|_{x=x_0}\\
{} &= - \left. \mathcal F_{\mathcal U\mathcal U}(\mathcal U, \zeta) \mathcal F_\zeta(\mathcal U, \zeta) \zeta_\varphi \right|_{x=x_0}, 
\end{align*}
then it is clear that the expression on the right-hand side has a sign. From \eqref{eq:zeta_derivatives} we have $\zeta_\varphi < 0$, since $g' = 1/a_{\zeta\zeta}(g) < 0$ due to (A3). From ($\mathcal{F}$4.1) we have $\mathcal F_\zeta(\mathcal U, \zeta) < 0$. 
Due to \eqref{eq:varphi_x_touches_c_char} and \eqref{eq:U-char-phi} we have $\mathcal F_{\mathcal U}(\mathcal U, \zeta) > 0$ at $x = x_0$, therefore, due to ($\mathcal{F} 3$) we have $\mathcal F_{\mathcal U\mathcal U}(\mathcal U, \zeta) < 0$ at $x=x_0$. 
Thus, we arrive at $\varphi_{xx}(x_0) > 0$, which is impossible for a $\mathcal U$-characteristic touching a $\zeta$-characteristic (which is a straight line) from below. Therefore, $\varphi(x)$ cannot touch any $\zeta$-characteristics and must cross them transversely.
\end{proof}

\begin{corollary}
\label{corollary:F-deriv-est-in-cone}
On $\mathcal U$-characteristics in the cone $\angle_{t_{inj}}$ we have
\[
\mathcal F_{\mathcal U} (\mathcal U, \zeta) > a_\zeta(\zeta), \qquad \mathcal F_{\mathcal U \mathcal U}(\mathcal U, \zeta) < 0.
\]
\end{corollary}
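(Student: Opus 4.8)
The plan is to reduce the statement to a sign-propagation argument built on Lemma \ref{lemma:transverse_intersect}. The crucial observation is that, by \eqref{eq:U-char-phi}, a $\mathcal U$-characteristic $\varphi(x)$ touches a $\zeta$-characteristic at a point precisely when its slope $\mathcal F_{\mathcal U}(\mathcal U, \zeta)$ equals the slope $a_\zeta(\zeta) = (\varphi - t_{inj})/x$ of the $\zeta$-characteristic passing through that point. Hence Lemma \ref{lemma:transverse_intersect} says exactly that the continuous quantity $\mathcal F_{\mathcal U}(\mathcal U, \zeta) - a_\zeta(\zeta)$ never vanishes along a $\mathcal U$-characteristic in $\angle_{t_{inj}}$. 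Its sign is therefore constant along the characteristic and is determined by its value at the point where the characteristic enters the cone.

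First I would fix that sign at the lower boundary of $\angle_{t_{inj}}$, treating the two families separately. For a characteristic extended upward from $\bigtriangleup_O$, it crosses the segment $TA$, i.e.\ the $\zeta$-characteristic $\varphi = t_{inj} + a_\zeta(1)x$ carrying $\zeta = 1$, with some value $\mathcal U \in [1, \mathcal U^+_{OA}]$. Since $\mathcal F_{\mathcal U}(\mathcal U^+_{OA}, 1) = v(1,0) > 0$ by \eqref{eq:U+_cf}, property ($\mathcal F$2) gives $\mathcal U^+_{OA} < \mathcal U^{\max}(1)$, and ($\mathcal F$3) makes $\mathcal F_{\mathcal U}(\cdot, 1)$ positive and decreasing on $[1, \mathcal U^+_{OA}]$; thus the entry slope is at least $v(1,0) = a(1) > a_\zeta(1)$, the last inequality being \eqref{inq:ads_conv} at $\zeta = 1$ together with $a(0) = 0$. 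For a characteristic launched from $\Phi$ by the Jouguet condition \eqref{eq:Joguet_cond}, its initial slope equals $a(\zeta)/\zeta$, which strictly exceeds $a_\zeta(\zeta)$ by \eqref{inq:ads_conv}, this being exactly the transversality inequality \eqref{ineq:transverse_Phi}. In both cases the entry value of $\mathcal F_{\mathcal U}(\mathcal U, \zeta) - a_\zeta(\zeta)$ is strictly positive, which is moreover the transverse starting condition required to invoke Lemma \ref{lemma:transverse_intersect}.

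Combining the positive entry value with the non-vanishing from Lemma \ref{lemma:transverse_intersect} and continuity of $\mathcal F_{\mathcal U}(\mathcal U, \zeta) - a_\zeta(\zeta)$ along the characteristic, I conclude $\mathcal F_{\mathcal U}(\mathcal U, \zeta) > a_\zeta(\zeta)$ everywhere in the cone, which is the first assertion. For the second, property (A2) gives $a_\zeta(\zeta) > 0$, so the first inequality yields $\mathcal F_{\mathcal U}(\mathcal U, \zeta) > 0$; the third bullet of ($\mathcal F$3) in Proposition \ref{prop:new-flow-function-properties} then immediately gives $\mathcal F_{\mathcal U \mathcal U}(\mathcal U, \zeta) < 0$.

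The main obstacle I anticipate is the bookkeeping at the entry points: one must verify that every $\mathcal U$-characteristic populating $\angle_{t_{inj}}$ genuinely enters either through the segment $TA$ (carrying $\zeta = 1$) or through the curved front $\Phi$, and that the monotonicity estimate $\mathcal F_{\mathcal U}(\mathcal U, 1) \geq v(1,0)$ is valid for the whole admissible range $\mathcal U \in [1, \mathcal U^+_{OA}]$ rather than merely at the endpoint. Once the strict positivity of $\mathcal F_{\mathcal U}(\mathcal U, \zeta) - a_\zeta(\zeta)$ is secured on the boundary, the propagation is a soft continuity-plus-nonvanishing argument, and the concavity inequality follows at once from ($\mathcal F$3).
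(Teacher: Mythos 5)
Your proof is correct and takes essentially the same approach as the paper: both arguments deduce the strict inequality $\mathcal F_{\mathcal U}(\mathcal U,\zeta) > a_\zeta(\zeta)$ from the non-touching statement of Lemma~\ref{lemma:transverse_intersect} together with continuity of the slope difference along the characteristic, and then obtain $\mathcal F_{\mathcal U\mathcal U}(\mathcal U,\zeta)<0$ from ($\mathcal F$3). Your explicit verification of the positive entry sign on $TA$ (via monotonicity of $\mathcal F_{\mathcal U}(\cdot,1)$ and \eqref{inq:ads_conv}) and on $\Phi$ (via the Jouguet condition and \eqref{ineq:transverse_Phi}) simply spells out what the paper leaves implicit in the lemma's hypothesis that characteristics start transversely at the lower boundary of the cone.
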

\begin{proof}
Since \eqref{eq:varphi_x_touches_c_char} is impossible, from \eqref{eq:g_definition}, \eqref{eq:zeta_in_cone_definition} and \eqref{eq:U-char-phi} we have
\[
\mathcal F_{\mathcal U} (\mathcal U, \zeta) = \varphi_x(x) > \dfrac{\varphi(x) - t_{inj}}{x} = a_\zeta(\zeta).
\]
The second inequality follows from the first and ($\mathcal{F}3$).
\end{proof}

It became clear in the course of our research that the Jouguet condition cannot be always applicable in all cases for all points $(\Phi(x), x)$ on the chemical shock front. Two problems limit the applicability of the condotion:
\begin{enumerate}
    \item Since the Jouguet characteristics by definition are tangential to the front $\Phi$, it is not clear whether they will go immediately above or below the front. In the cases where they go below, such characteristics cannot be used to define the solution above the shock front. The second derivatives of the curves need to be compared in order to determine which is the case at every point.
    \item Another characteristic can arrive at $(\Phi(x), x)$ from the left and bring a value of $\mathcal U$ with it that contradicts the Jouguet condition \eqref{eq:Joguet_cond}. At such points no additional condition is necessary.
\end{enumerate}

In this subsection we formulate a novel condition derived from the second derivative comparison. Since the shape of the $\mathcal U$-characteristics is mostly driven by the function $\mathcal F$, and the shape of $\Phi$ is defined solely through the function $a$, this new condition shows an interplay of the hydrodynamic properties of the Buckley--Leverett flow and the adsorption curve in the construction of the $\mathcal U$-solution (in particular in calculating the saturation behind the chemical shock front).
We prove that this condition is directly tied to the first problem described above. In the later subsections we show that at least in some cases the second problem is also tied to the same condition. We conjecture that it might be true in all cases.

\begin{definition}
Denote 
\[
b(\zeta) = \dfrac{a(\zeta)}{\zeta} - a_\zeta(\zeta).
\]
Let $\zeta\in(0, 1]$ and $\mathcal U$ be such that $\mathcal F_{\mathcal U} (\mathcal U, \zeta) = \dfrac{a(\zeta)}{\zeta}$. We introduce the condition
\begin{itemize}
    \item[($\mathcal F$5)] $-\mathcal F_{\mathcal U \mathcal U}(\mathcal U, \zeta)\mathcal F_{\zeta}(\mathcal U, \zeta) + \left(\mathcal F_{\mathcal U \zeta}(\mathcal U, \zeta) + \dfrac{b(\zeta)}{\zeta}\right) b(\zeta) < 0$.
\end{itemize}
It either holds or breaks for each $\zeta$ in the interval.
\end{definition}

\begin{lemma}\label{lem:Joguet_char_above_shock} 
Under the condition ($\mathcal{F}$5) the characteristic started from the chemical shock front and constructed via the Jouguet principle does not go below the shock in some vicinity of the initial point.
\end{lemma}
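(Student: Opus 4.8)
The plan is to reduce the claim to a comparison of second derivatives at the single point of tangency. By construction the Jouguet characteristic $\varphi(x)$ passing through a point $(\Phi(x_1), x_1)$ satisfies $\varphi_x(x_1) = \mathcal F_{\mathcal U}(\mathcal U, \zeta) = a(\zeta)/\zeta = \Phi_x(x_1)$ (the last equality by \eqref{eq:Phi_equation}), so the two curves are tangent at $x_1$. Thus in a vicinity of $x_1$ the characteristic stays above the shock if and only if $\varphi_{xx}(x_1) \geqslant \Phi_{xx}(x_1)$. The whole statement therefore amounts to showing that condition ($\mathcal F$5) implies this second-derivative inequality at the tangency point.

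First I would compute $\varphi_{xx}(x_1)$. Differentiating \eqref{eq:U-char-phi} along the characteristic and using the chain rule as in the proof of Lemma \ref{lemma:transverse_intersect}, we get
\[
\varphi_{xx} = \mathcal F_{\mathcal U\mathcal U}(\mathcal U, \zeta)\,\frac{d\mathcal U}{dx} + \mathcal F_{\mathcal U\zeta}(\mathcal U, \zeta)\,\frac{d\zeta}{dx},
\]
where $\tfrac{d\mathcal U}{dx}$ is given by \eqref{eq:U-char-U} and $\tfrac{d\zeta}{dx} = \zeta_x + \zeta_\varphi\varphi_x$ must be evaluated at the tangency point. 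The crucial observation is that at $x_1$ the characteristic is tangent to $\Phi$, which is itself tangent to a $\zeta$-characteristic only in the sense recorded in \eqref{inq:ads_conv}; here, however, since $\varphi_x = a(\zeta)/\zeta > a_\zeta(\zeta) = (\varphi-t_{inj})/x$, the characteristic crosses the $\zeta$-characteristics transversely, so $\tfrac{d\zeta}{dx} \neq 0$. I would express $\tfrac{d\zeta}{dx}$ in terms of $b(\zeta) = a(\zeta)/\zeta - a_\zeta(\zeta)$ using $\zeta = g((\varphi - t_{inj})/x)$, the relation \eqref{eq:zeta_derivatives} for $\zeta_\varphi$, and the value $\varphi_x - (\varphi-t_{inj})/x = b(\zeta)$; this is exactly where the combination $b(\zeta)$ and the ratio $b(\zeta)/\zeta$ entering ($\mathcal F$5) will surface.

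Next I would compute $\Phi_{xx}(x_1)$. The formula for $\tfrac{d^2\Phi}{dx^2}$ is already displayed in the text just before \eqref{eq:shock_front_deriv_zeta}; evaluating its three factors at $x_1$ using \eqref{eq:shock_front_deriv_zeta}, the definition of $g$, and \eqref{ineq:transverse_Phi} gives a clean expression, again carrying a factor of $b(\zeta)$ from the term $\Phi_x - (\Phi - t_{inj})/x$. Subtracting, $\varphi_{xx}(x_1) - \Phi_{xx}(x_1)$ collapses (after multiplying by the common positive factor $1/x$ or $\zeta_\varphi$, whose sign I would track carefully) precisely to the left-hand side of ($\mathcal F$5). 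Hence ($\mathcal F$5), stated as a strict inequality $< 0$, yields $\varphi_{xx}(x_1) > \Phi_{xx}(x_1)$, so the characteristic curves upward faster than the shock and remains strictly above it in a punctured neighborhood.

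The main obstacle will be the bookkeeping of signs and the algebraic simplification in the subtraction step: one must correctly substitute \eqref{eq:U-char-U} for $\tfrac{d\mathcal U}{dx}$, correctly evaluate $\tfrac{d\zeta}{dx}$ (which vanishes in Lemma \ref{lemma:transverse_intersect} but here does not, since we are \emph{on} the shock where $\varphi_x = a/\zeta$ rather than $\varphi_x = a_\zeta$), and verify that the positive common factor pulled out does not flip the inequality. I would be especially careful that the term $\mathcal F_{\mathcal U\zeta}\,\tfrac{d\zeta}{dx}$ from $\varphi_{xx}$ and the mixed term from $\Phi_{xx}$ recombine to produce exactly $\big(\mathcal F_{\mathcal U\zeta} + b(\zeta)/\zeta\big)b(\zeta)$, matching the parenthesized expression in ($\mathcal F$5). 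Once the identity $\varphi_{xx} - \Phi_{xx} = (\text{positive factor}) \cdot \big[-\mathcal F_{\mathcal U\mathcal U}\mathcal F_\zeta + (\mathcal F_{\mathcal U\zeta} + b(\zeta)/\zeta)b(\zeta)\big]$ is established, the lemma follows immediately.
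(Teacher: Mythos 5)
Your overall strategy is the paper's own: reduce the lemma to a second-order comparison of the tangent curves $\varphi$ and $\Phi$ at the tangency point, and show that the difference of second derivatives is, up to a factor of definite sign, exactly the left-hand side of ($\mathcal F$5). (The paper parametrizes the slopes by $\zeta$ and computes $\frac{d}{d\zeta}(\varphi_x-\Phi_x)$, while you parametrize by $x$; the two are equivalent because at the tangency point $\frac{d\zeta}{dx}$ is the same along both curves, cf.\ \eqref{eq:d_zeta_d_x}.)

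However, your capstone identity has the wrong sign on the proportionality factor, and this is precisely where the proof lives or dies. The correct statement is
\[
\varphi_{xx}(x_1) - \Phi_{xx}(x_1) \;=\; \zeta_\varphi \cdot \Bigl[-\mathcal F_{\mathcal U\mathcal U}\mathcal F_{\zeta} + \bigl(\mathcal F_{\mathcal U\zeta} + b(\zeta)/\zeta\bigr)\,b(\zeta)\Bigr],
\]
where the factor $\zeta_\varphi = g'\bigl((\varphi-t_{inj})/x\bigr)/x$ is \emph{negative} by (A3) (see \eqref{eq:zeta_derivatives} and \eqref{ineq:g_monoton}). Indeed, at the tangency point $\varphi_{xx}-\Phi_{xx} = \frac{d\zeta}{dx}\,\frac{d}{d\zeta}(\varphi_x-\Phi_x)$, the common factor is $\frac{d\zeta}{dx} = \zeta_\varphi\bigl(\mathcal F_{\mathcal U}-a_\zeta\bigr) = \zeta_\varphi\, b(\zeta) < 0$, and $\frac{d}{d\zeta}(\varphi_x-\Phi_x)$ equals $1/b(\zeta)$ times the bracket above. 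Had your asserted identity with a \emph{positive} factor been established, condition ($\mathcal F$5) (an inequality ``$<0$'') would force $\varphi_{xx}(x_1) < \Phi_{xx}(x_1)$, i.e.\ the Jouguet characteristic would dive \emph{below} the shock --- the opposite of the lemma, and inconsistent with your own (correct) intermediate claim that ($\mathcal F$5) yields $\varphi_{xx}(x_1) > \Phi_{xx}(x_1)$. The negativity of $\zeta_\varphi$ --- equivalently, the fact that $\zeta$ decreases as $x$ increases along both curves, which is how the paper phrases it --- is not optional bookkeeping: it is the mechanism by which a ``$<0$'' hypothesis produces upward divergence. (A small additional slip: $\Phi$ is not tangent to the $\zeta$-characteristics in any sense; it crosses them transversely by \eqref{ineq:transverse_Phi}, and \eqref{inq:ads_conv} is the inequality guaranteeing this, not a tangency statement.) Once the sign of the factor is corrected, your computation goes through and coincides with the paper's proof.
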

\begin{proof}
Consider the $\mathcal U$-characteristics $\varphi(x)$ constructed via the Jouguet condition at the starting point $x_0$. At $(\Phi(x_0), x_0)$ we have
\[
\varphi_x(x_0) = \mathcal F_{\mathcal U}(\mathcal U(\Phi(x_0),x_0), \zeta(\Phi(x_0),x_0)) = {\Phi}_x(x_0) = \dfrac{a(\zeta(\Phi(x_0), x_0))}{\zeta(\Phi(x_0), x_0)}.\]
We have a strict sign (non-equal) in ($\mathcal F$5) at $(\Phi(x_0), x_0)$, then $\varphi$ is guaranteed to go above $\Phi$, since it implies
\[
\dfrac{d}{d\zeta}(\varphi_x - \Phi_x) = -\dfrac{\mathcal F_{\mathcal U \mathcal U}(\mathcal U, \zeta)\mathcal F_{\zeta}(\mathcal U, \zeta)}{\mathcal F_{\mathcal U}(\mathcal U, \zeta) - a_\zeta(\zeta)} + \mathcal F_{\mathcal U \zeta}(\mathcal U, \zeta) - \dfrac{a_\zeta(\zeta)}{\zeta} + \dfrac{a(\zeta)}{\zeta^2} < 0.
\]
Since $\zeta$ decreases the curves diverge in the upward direction in a vicinity of $(\Phi(x_0), x_0)$.
\end{proof}

\subsection{New splitting technique for characteristic system}
In this section we describe a change of variables in the cone $\angle_{t_{inj}}$. The new coordinates are similar to polar coordinates centered at $T$, with $\zeta$ as an angular coordinate and a new variable
\begin{equation}
\label{eq:psi_def}
\psi = \frac12\ln((\varphi-t_{inj})^2 + x^2)
\end{equation}
playing the role of the radial coordinate.
Mapping $(\varphi,x)\to(\psi,\zeta)$ turns the cone into a strip.

Now let us trace how $\zeta$ changes along the characteristic $\varphi(x)$.
Note that from \eqref{eq:zeta_in_cone_definition} we have
\begin{equation}\label{eq:zeta_derivatives}
\zeta_\varphi =\dfrac{1}{x} g'\left(\dfrac{\varphi - t_{inj}}{x}\right), \quad  \zeta_x = -\dfrac{\varphi - t_{inj}}{x^2} g'\left(\dfrac{\varphi - t_{inj}}{x}\right) = - \dfrac{\varphi - t_{inj}}{x} \zeta_\varphi,
\end{equation}
therefore, from \eqref{eq:zeta_in_cone_definition}, \eqref{eq:g_definition} and \eqref{eq:U-char-phi} we derive
\begin{equation}
\label{eq:d_zeta_d_x}
\begin{aligned}
\dfrac{d}{dx}\zeta(\varphi(x), x) & = \zeta_x(\varphi(x), x) + \zeta_\varphi(\varphi(x), x) \dfrac{d}{dx}\varphi(x) \\
{} & = \zeta_\varphi(\varphi(x), x) \Big(\mathcal F_{\mathcal U}(\mathcal U(\varphi(x), x), \zeta(\varphi(x), x)) - a_\zeta(\zeta(\varphi(x), x)) \Big).
\end{aligned}
\end{equation}
Substituting $\zeta_\varphi$ from this relation into \eqref{eq:U-char-U} we obtain the following simplified equation:
\begin{equation}
\label{eq:U-char-of-c}
\dfrac{d}{d\zeta} \mathcal U = -\dfrac{\mathcal F_{\zeta}(\mathcal U, \zeta)}{\mathcal F_{\mathcal U}(\mathcal U, \zeta) - a_\zeta(\zeta)}.  
\end{equation}
For $\mathcal U = 1$ we extend the right-hand side by zero, therefore allowing the solution $\mathcal U(\zeta)\equiv 1$. Thus there is a mapping of $\mathcal U$-characteristics in $(\varphi, x)$ coordinates into the trajectories of \eqref{eq:U-char-of-c} on the~$(\zeta, \mathcal U)$ plane (see Fig.~\ref{fig:full_Jouguet_U_zeta}, \ref{fig:one_sign_change_U_zeta} in Sect.~\ref{sec:Examples}). The constant solution $\mathcal U\equiv 1$ then corresponds to the degenerate one-point characteristic $\mathcal U(t_{inj}, 0) = 1$.
Due to ($\mathcal F$4) the right-hand side of \eqref{eq:U-char-of-c} is positive, therefore all integral curves are increasing functions of $\zeta$. 

Consider the implicit function $\mathcal U_{J}(\zeta)$ given by the equation
\begin{equation}
    \label{eq:Joguet_in_zeta_U}
\mathcal F_{\mathcal U}(\mathcal U_{J}(\zeta), \zeta) = \frac{a(\zeta)}\zeta, \qquad \zeta\in(0,1].
\end{equation}
We also define $\mathcal U_J(0)$ by continuity, so it could be found as the solution of 
$$\mathcal F_{\mathcal U}(\mathcal U_{J}(0), 0) = a_\zeta(0).$$
From \eqref{eq:Joguet_in_zeta_U} via implicit function derivative formula we calculate
\begin{equation}
\label{eq:d-dzeta-U-Phi}
\dfrac{d}{d\zeta} \mathcal U_J = -\dfrac{\mathcal F_{\mathcal U \zeta}(\mathcal U_J(\zeta), \zeta) + \dfrac{b(\zeta)}{\zeta}}{\mathcal F_{\mathcal U\mathcal U}(\mathcal U_J(\zeta), \zeta)}.
\end{equation}
Note that equation \eqref{eq:Joguet_in_zeta_U} reflects the Jouguet condition \eqref{eq:Joguet_cond} on the $(\zeta, \mathcal U)$ plane. The graph of the function $\mathcal U_J$ denoted by 
$$\Upsilon_J = \{(\zeta, \mathcal U): \zeta\in[0,1], \mathcal U = \mathcal U_J(\zeta)\}$$ 
corresponds to the chemical shock front $\varphi = \Phi(x)$ if and only if the characteristic at the corresponding point is constructed via the Jouguet condition. Each $\mathcal U$-characteristic beginning at $TA$ corresponds to the integral curve given by the equation \eqref{eq:U-char-of-c} and ending at the segment 
$$\Upsilon_{TA} = \{(\zeta, \mathcal U)\colon \zeta = 1,\ 1<\mathcal U \leq \mathcal U_J(1) = \mathcal U_{OA}^+\}.$$ 
Each $\mathcal U$-characteristic beginning at the point $(\Phi(x), x)$ and constructed via the Jouguet condition corresponds to the integral curve given by the same equation \eqref{eq:U-char-of-c} that arrives at $\Upsilon_J$.

Since
\begin{equation}
\label{eq:a_zeta}
a_\zeta(\zeta) = \frac{\varphi - t_{inj}}x,
\end{equation}
using \eqref{eq:U-char-phi} we obtain
\begin{equation}
\label{eq:psi-eqn}
\frac{d\psi(x)}{dx} = \frac{1 + a_\zeta(\zeta)\mathcal F_{\mathcal U}(\mathcal U, \zeta)}{e^\psi\sqrt{a_\zeta^2(\zeta)+1}}.
\end{equation}
Due to \eqref{eq:d_zeta_d_x} using \eqref{eq:zeta_derivatives} we derive 
\begin{equation}
\label{eq:d_psi_d_zeta}
    \frac{d\psi}{d\zeta} = \frac{a_{\zeta\zeta}(\zeta)a_\zeta(\zeta)}{1 + a_\zeta^2(\zeta)} + \frac{a_{\zeta\zeta}(\zeta)}{\mathcal F_{\mathcal U}(\mathcal U, \zeta) - a_\zeta(\zeta)}
\end{equation}

Suppose $(\mathcal F 5)$ holds for $\zeta(\Phi(x), x)$ at some point $(\Phi(x), x)$. Then we can map the point $(x, \Phi(x), \mathcal{U}(\Phi(x), x)))$ into the point $(\zeta, \mathcal U_J(\zeta), \psi_\Phi(\zeta))$, where $\zeta$ is calculated via \eqref{eq:zeta_in_cone_definition}, $\mathcal U_J(\zeta)$ is calculated implicitly from the equation~\eqref{eq:Joguet_in_zeta_U}, and using the notations from Lemma~\ref{lemma_explicit_Phi} and \eqref{eq:a_zeta} we calculate
\begin{align}
\label{eq:psi_Phi_definition}
\begin{split}
\psi_\Phi(\zeta) &= \frac12\ln((\Phi(x) - t_{inj})^2 + x^2) = \ln x + \dfrac{1}{2}\ln(1+ a^2_\zeta(\zeta_\Phi(x))) \\
{} &= \ln t_{inj} - \ln p(\zeta) + \dfrac{1}{2}\ln(1+ a^2_\zeta(\zeta)).
\end{split}
\end{align}
This relation also constitutes the initial conditions for the equation \eqref{eq:d_psi_d_zeta}, which, given a solution $\mathcal U(\zeta)$ provides us with the values of $\psi$ along it.

Similarly, straightforward computations give us the initial condition for $\psi$ at $\Upsilon_{TA}$. Knowing the value of $\mathcal U(\varphi, x)$ at some point $(\varphi, x)$ of $TA$ from
\[
\varphi = \mathcal F_{\mathcal U}(\mathcal U, 1) x, \quad \varphi = t_{inj} + a_\zeta(1) x
\]
we easily calculate
\[
x = \dfrac{t_{inj}}{\mathcal F_{\mathcal U}(\mathcal U, 1) - a_\zeta(1)}, \quad \dfrac{\varphi - t_{inj}}{x} = a_\zeta(1),
\]
and therefore
\begin{equation}
\label{eq:definition_of_psi_TA}
\psi_{TA}(\mathcal U) = \ln\left(\frac{t_{inj}}{\mathcal F_{\mathcal U}(\mathcal U, 1)  - a_\zeta(1)}\right) + \frac12\ln(1 + a_\zeta^2(1)).
\end{equation}

Thus, for every characteristic (eiter from $TA$ or constructed via the Jouguet condition) we can solve \eqref{eq:U-char-of-c} for $\mathcal U(\zeta)$, then substitute this solution into \eqref{eq:d_psi_d_zeta}, which from initial conditions provided above gives us the solution $\psi(\zeta; \mathcal U(\cdot))$. 

\begin{definition}
\label{def:mapping_and_solution_shorthands}
For all $\mathcal U$-characteristics in $\angle_{t_{inj}}$ we define the mapping
\begin{equation}
\label{eq:mapping_into_zeta_U}
(x, \varphi(x), \mathcal U(\varphi(x), x)) \to (\zeta, \mathcal U(\zeta), \psi(\zeta; \mathcal U(\cdot))).
\end{equation}
Let $\mathcal U(\zeta;\zeta_0, \mathcal U_0)$ be the solution of~\eqref{eq:U-char-of-c} with initial condition $\mathcal U(\zeta_0) = \mathcal U_0$. For brevity we will use the shorthands $\mathcal U(\zeta;\zeta_0) = \mathcal U(\zeta;\zeta_0, \mathcal U_J(\zeta_0))$ and $\mathcal U(\zeta;\mathcal U_0) = \mathcal U(\zeta;1, \mathcal U_0)$ when appropriate. Similarly, given the solution $\mathcal U(\zeta;\zeta_0)$, we shorthand $\psi(\zeta; \zeta_0) = \psi(\zeta; \mathcal U(\cdot;\zeta_0))$ and for $\mathcal U(\zeta;\mathcal U_0)$, we write $\psi(\zeta;\mathcal U_0) = \psi(\zeta; \mathcal U(\cdot;\mathcal U_0))$.
\end{definition}

\begin{proposition}[Smooth dependence on parameters]
\label{prop:continuous_differential_by_parameter}
The functions $\mathcal U(\zeta;\zeta_0)$, $\mathcal U(\zeta;\mathcal U_0)$, $\psi(\zeta; \zeta_0)$ and $\psi(\zeta;\mathcal U_0)$ are continuously differentiable with respect to their parameters.
\end{proposition}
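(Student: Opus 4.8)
The plan is to observe that all four functions are either solutions of, or are constructed by quadrature from, the two first-order ODEs \eqref{eq:U-char-of-c} and \eqref{eq:d_psi_d_zeta}, and then to invoke the classical theorem on continuously differentiable dependence of an ODE flow on its initial data and parameters. The whole argument thus reduces to verifying that the right-hand sides of these equations, together with the maps that supply the initial conditions, are $\mathcal C^1$ on the region swept out by the characteristics in $\angle_{t_{inj}}$.

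First I would treat the vector field $H(\mathcal U,\zeta) = -\mathcal F_\zeta(\mathcal U,\zeta)/\big(\mathcal F_{\mathcal U}(\mathcal U,\zeta)-a_\zeta(\zeta)\big)$ of \eqref{eq:U-char-of-c}. By ($\mathcal F$1) the function $\mathcal F$ is $\mathcal C^2$ and by (A1) $a$ is $\mathcal C^2$, so numerator and denominator are $\mathcal C^1$; Corollary~\ref{corollary:F-deriv-est-in-cone} gives $\mathcal F_{\mathcal U}-a_\zeta>0$ along the characteristics, so on any compact subset of the open cone the denominator is bounded away from zero and $H$ is $\mathcal C^1$ there. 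The flow-regularity theorem then yields that $\mathcal U(\zeta;\zeta_0,\mathcal U_0)$ is jointly $\mathcal C^1$ in $(\zeta,\zeta_0,\mathcal U_0)$, whence $\mathcal U(\zeta;\mathcal U_0)=\mathcal U(\zeta;1,\mathcal U_0)$ is $\mathcal C^1$ in $\mathcal U_0$ at once. For $\mathcal U(\zeta;\zeta_0)=\mathcal U(\zeta;\zeta_0,\mathcal U_J(\zeta_0))$ I would first apply the implicit function theorem to \eqref{eq:Joguet_in_zeta_U}: at a Jouguet point $\mathcal F_{\mathcal U}(\mathcal U_J,\zeta)=a(\zeta)/\zeta>0$, so ($\mathcal F$3) forces $\mathcal F_{\mathcal U\mathcal U}(\mathcal U_J,\zeta)<0\neq0$, which makes $\zeta_0\mapsto\mathcal U_J(\zeta_0)$ a $\mathcal C^1$ map (this is exactly what makes \eqref{eq:d-dzeta-U-Phi} meaningful). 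Then $\mathcal U(\zeta;\zeta_0)$ is the composition of the jointly $\mathcal C^1$ flow with $\zeta_0\mapsto(\zeta_0,\mathcal U_J(\zeta_0))$, hence $\mathcal C^1$ in $\zeta_0$.

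For the two $\psi$ functions I would use the integral representation $\psi(\zeta;\zeta_0)=\psi_\Phi(\zeta_0)+\int_{\zeta_0}^{\zeta}P(\mathcal U(s;\zeta_0),s)\,ds$, where $P$ denotes the right-hand side of \eqref{eq:d_psi_d_zeta}; the same reasoning as above (identical denominator, $a\in\mathcal C^2$) shows $P$ is $\mathcal C^1$. The initial value $\psi_\Phi$ of \eqref{eq:psi_Phi_definition} is $\mathcal C^1$ because $p(\zeta)>0$ for $\zeta>0$ (since $p(0)=0$ and $p_\zeta=-\zeta a_{\zeta\zeta}>0$ by (A3)), and $\psi_{TA}$ of \eqref{eq:definition_of_psi_TA} is $\mathcal C^1$ since $\mathcal F_{\mathcal U}(\cdot,1)-a_\zeta(1)>0$ on $TA$. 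Using the joint $\mathcal C^1$ dependence of $\mathcal U(s;\zeta_0)$ already established, the integrand is $\mathcal C^1$ in $(s,\zeta_0)$, so differentiation under the integral sign (legitimate by continuity of the partials and compactness of the $\zeta$-interval) produces a continuous $\zeta_0$-derivative, the lower limit contributing the boundary term $-P(\mathcal U_J(\zeta_0),\zeta_0)$; the same scheme handles $\psi(\zeta;\mathcal U_0)$.

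The hard part will not be any single computation but the uniformity required to pass from the pointwise inequality of Corollary~\ref{corollary:F-deriv-est-in-cone} to $\mathcal C^1$ estimates on a full neighbourhood of each trajectory. The vector field degenerates as $\mathcal U\to1$ (where $\mathcal F_{\mathcal U}\to+\infty$ by ($\mathcal F$2) and $H$ is merely extended by zero) and as $\zeta\to0$ (where $p(\zeta)\to0$ in $\psi_\Phi$). I expect to resolve this by restricting to parameter ranges whose trajectories remain in a compact subset of the open cone bounded away from $\mathcal U=1$ and $\zeta=0$, on which the denominator admits a uniform positive lower bound; the degenerate limiting characteristic $\mathcal U\equiv1$ is then treated separately as the explicit constant solution.
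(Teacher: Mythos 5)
Your proposal is correct and follows the route the paper itself intends: the paper states Proposition~\ref{prop:continuous_differential_by_parameter} without proof, treating it as the classical theorem on $\mathcal C^1$ dependence of ODE solutions on initial data and parameters, which is precisely what you invoke after verifying the hypotheses (the right-hand sides of \eqref{eq:U-char-of-c} and \eqref{eq:d_psi_d_zeta} are $\mathcal C^1$ where $\mathcal F_{\mathcal U}-a_\zeta>0$, which Corollary~\ref{corollary:F-deriv-est-in-cone} guarantees along the trajectories, and the initial-condition maps $\mathcal U_J$, $\psi_\Phi$, $\psi_{TA}$ are $\mathcal C^1$ by the implicit function theorem and the positivity of $p$ and of $\mathcal F_{\mathcal U}(\cdot,1)-a_\zeta(1)$). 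Your handling of the degeneracies at $\mathcal U=1$ and $\zeta=0$ by localizing to compact neighbourhoods of the trajectories and treating $\mathcal U\equiv1$ separately is consistent with how the paper itself uses the proposition.
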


\begin{lemma}
\label{lemma_dU_dzeta0}
Let ($\mathcal F$5) hold for $\zeta_0$. Then $\dfrac{\partial}{\partial {\zeta_0}}\mathcal U(\zeta; \zeta_0) > 0$ for $\zeta, \zeta_0 \in [0,1]$.
\end{lemma}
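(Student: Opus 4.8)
The plan is to treat $\zeta_0$ as a parameter and study the sensitivity $w(\zeta) := \frac{\partial}{\partial \zeta_0}\mathcal U(\zeta;\zeta_0)$ through the variational equation associated with \eqref{eq:U-char-of-c}. By Proposition \ref{prop:continuous_differential_by_parameter} this partial derivative exists and is continuous, and differentiating the characteristic equation \eqref{eq:U-char-of-c} with respect to the parameter $\zeta_0$ shows that $w$ solves a \emph{linear homogeneous} ODE in $\zeta$,
\[
\frac{dw}{d\zeta} = \frac{\partial}{\partial \mathcal U}\!\left(-\frac{\mathcal F_\zeta(\mathcal U,\zeta)}{\mathcal F_{\mathcal U}(\mathcal U,\zeta)-a_\zeta(\zeta)}\right)\Big|_{\mathcal U=\mathcal U(\zeta;\zeta_0)} w.
\]
The coefficient on the right is continuous along the whole characteristic because $\mathcal F_{\mathcal U}-a_\zeta>0$ in the cone (Corollary \ref{corollary:F-deriv-est-in-cone}), so it never blows up. Consequently $w(\zeta)=w(\zeta_0)\exp\!\big(\int_{\zeta_0}^{\zeta}(\cdots)\,dr\big)$ keeps a constant sign along the entire characteristic, and the lemma reduces to determining the sign of the single number $w(\zeta_0)$.

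To compute $w(\zeta_0)$ I would differentiate the defining identity $\mathcal U(\zeta_0;\zeta_0)=\mathcal U_J(\zeta_0)$ (valid because the initial point lies on $\Upsilon_J$) with respect to $\zeta_0$. The chain rule gives $\big[\tfrac{d}{d\zeta}\mathcal U(\zeta;\zeta_0)\big]_{\zeta=\zeta_0}+w(\zeta_0)=\mathcal U_J'(\zeta_0)$, where the first term is simply the right-hand side of \eqref{eq:U-char-of-c} evaluated on $\Upsilon_J$. Hence
\[
w(\zeta_0)=\mathcal U_J'(\zeta_0)+\frac{\mathcal F_\zeta(\mathcal U_J(\zeta_0),\zeta_0)}{\mathcal F_{\mathcal U}(\mathcal U_J(\zeta_0),\zeta_0)-a_\zeta(\zeta_0)}.
\]
The key simplification is that on $\Upsilon_J$, by the Jouguet relation \eqref{eq:Joguet_in_zeta_U}, one has $\mathcal F_{\mathcal U}-a_\zeta=\tfrac{a(\zeta_0)}{\zeta_0}-a_\zeta(\zeta_0)=b(\zeta_0)$, so the denominator collapses to $b(\zeta_0)$.

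Finally I would substitute the explicit expression \eqref{eq:d-dzeta-U-Phi} for $\mathcal U_J'(\zeta_0)$ and place everything over the common denominator $\mathcal F_{\mathcal U\mathcal U}(\mathcal U_J(\zeta_0),\zeta_0)\,b(\zeta_0)$. A direct calculation shows the numerator equals $-\big[-\mathcal F_{\mathcal U\mathcal U}\mathcal F_\zeta+(\mathcal F_{\mathcal U\zeta}+\tfrac{b}{\zeta_0})b\big]$, i.e.\ the negative of the left-hand side of condition ($\mathcal F$5). The sign of $w(\zeta_0)$ is then read off from ($\mathcal F$5) together with the two auxiliary facts $b(\zeta_0)>0$ (from \eqref{inq:ads_conv}) and $\mathcal F_{\mathcal U\mathcal U}(\mathcal U_J(\zeta_0),\zeta_0)<0$ (from ($\mathcal F$3), since $\mathcal F_{\mathcal U}=a/\zeta_0>0$ on $\Upsilon_J$ forces $\mathcal U_J<\mathcal U^{\max}$). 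Propagating this sign through the variational formula fixes the sign of $w(\zeta)$ for every $\zeta$, which is the assertion of the lemma.

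The main obstacle I anticipate is the sign bookkeeping in this last step: one must track the signs of $\mathcal F_{\mathcal U\mathcal U}$, $\mathcal F_\zeta$ and $b$ simultaneously and verify that the numerator really matches ($\mathcal F$5) with the correct overall orientation, since a single slip reverses the inequality. Everything else—the linearity of the variational equation and the collapse of the denominator on $\Upsilon_J$—is structural and robust; indeed it is precisely this collapse that makes the hypothesis ``($\mathcal F$5) at $\zeta_0$'' (rather than along the whole characteristic) sufficient, because sign preservation needs the sign of $w$ at only one point.
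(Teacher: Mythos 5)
Your proposal is structurally identical to the paper's own proof: both differentiate \eqref{eq:U-char-of-c} with respect to the parameter to obtain the linear homogeneous variational equation for $w(\zeta)=\frac{\partial}{\partial\zeta_0}\mathcal U(\zeta;\zeta_0)$, integrate it to conclude that $w(\zeta)$ keeps the sign of $w(\zeta_0)$, and compute $w(\zeta_0)$ by differentiating the boundary identity $\mathcal U(\zeta_0;\zeta_0)=\mathcal U_J(\zeta_0)$, reducing everything to the sign of $\mathcal U_J'(\zeta_0)-\frac{d\mathcal U}{d\zeta}$ on $\Upsilon_J$ via ($\mathcal F$5). Your algebra is also correct: over the common denominator $\mathcal F_{\mathcal U\mathcal U}\,b$ the numerator of $w(\zeta_0)$ is exactly the negative of the left-hand side of ($\mathcal F$5).

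However, the one step you explicitly defer --- the final sign bookkeeping --- is precisely where the argument fails to deliver the stated inequality, and finishing it yields the opposite sign. Under ($\mathcal F$5) your numerator is positive, while your denominator $\mathcal F_{\mathcal U\mathcal U}(\mathcal U_J(\zeta_0),\zeta_0)\,b(\zeta_0)$ is negative (by the two auxiliary facts you established, $\mathcal F_{\mathcal U\mathcal U}<0$ and $b>0$), so
\[
w(\zeta_0)=\frac{\mathcal F_{\mathcal U\mathcal U}\mathcal F_\zeta-\left(\mathcal F_{\mathcal U\zeta}+\frac{b(\zeta_0)}{\zeta_0}\right)b(\zeta_0)}{\mathcal F_{\mathcal U\mathcal U}\,b(\zeta_0)}<0,
\]
hence $\frac{\partial}{\partial\zeta_0}\mathcal U(\zeta;\zeta_0)<0$, not $>0$. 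This is consistent with how the paper itself reads ($\mathcal F$5) in Proposition~\ref{prop:full_cover_of_zeta_U_plane}, namely $\frac{d}{d\zeta}\mathcal U_J<\frac{d}{d\zeta}\mathcal U$ on $\Upsilon_J$, which is literally the statement $w(\zeta_0)<0$; it is also confirmed geometrically, since trajectories cross $\Upsilon_J$ from below, so the trajectory through $(\zeta_0,\mathcal U_J(\zeta_0))$ extended to the right lies above $\Upsilon_J$, and non-crossing of trajectories then gives $\mathcal U(\zeta;\zeta_0)>\mathcal U(\zeta;\zeta_0')$ for $\zeta_0<\zeta_0'$. So your closing claim that the sign ``is the assertion of the lemma'' is not substantiated and, as stated, is wrong. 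To be fair, the defect originates in the paper: its proof asserts $\mathcal U_{\zeta_0}(\zeta_0;\zeta_0)>0$ without carrying out the computation, in contradiction with its own Proposition~\ref{prop:full_cover_of_zeta_U_plane}; the error is then silently compensated in the proof of Lemma~\ref{lemma:dpsi_dzeta0}, where differentiating $\left(\mathcal F_{\mathcal U}-a_\zeta\right)^{-1}$ under the integral drops a factor $-\left(\mathcal F_{\mathcal U}-a_\zeta\right)^{-1}$, so that the conclusion $\psi_{\zeta_0}<0$ (and with it the Jacobian positivity in Theorem~\ref{thm1}) remains valid once both signs are corrected. Had you executed the bookkeeping you yourself identified as the main risk, you would have caught this discrepancy rather than asserting agreement with the lemma.
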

\begin{proof}
Changing the order of differentiation we obtain
    \begin{equation*}
\frac{d}{d\zeta}\left(\mathcal U_{\zeta_0}(\zeta; \zeta_0)\right) = -\frac{\partial}{\partial\mathcal U}\left(\dfrac{\mathcal F_{\zeta}(\mathcal U(\zeta; \zeta_0), \zeta)}{\mathcal F_{\mathcal U}(\mathcal U(\zeta; \zeta_0), \zeta) - a_\zeta(\zeta)}\right)\mathcal U_{\zeta_0}(\zeta).  
\end{equation*}
Integrating this we get
\begin{equation*}
    \mathcal U_{\zeta_0}(\zeta; \zeta_0) = \mathcal U_{\zeta_0}(\zeta_0; \zeta_0) \exp\left(-\int\limits_{\zeta_0}^\zeta \frac{\partial}{\partial\mathcal U}\left(\dfrac{\mathcal F_{\zeta}(\mathcal U(s; \zeta_0), s)}{\mathcal F_{\mathcal U}(\mathcal U(s; \zeta_0), s) - a_\zeta(s)}\right)ds\right).
\end{equation*}
Recall that due to the boundary condition we have
\[
\mathcal U(\zeta_0, \zeta_0) = \mathcal U_J(\zeta_0).
\]
Taking the derivative with respect to $\zeta_0$ we obtain
\[
\mathcal U_{\zeta_0}(\zeta_0; \zeta_0) + \mathcal U_{\zeta}(\zeta_0; \zeta_0) = \mathcal U'_J(\zeta_0).
\]
From \eqref{eq:U-char-of-c} and \eqref{eq:d-dzeta-U-Phi} via the condition ($\mathcal F$5) we obtain 
\[
\mathcal U_{\zeta_0}(\zeta_0; \zeta_0) > 0,
\]
and the lemma is proved.
\end{proof}

\begin{lemma}
\label{lemma_U_derivative_by_U_0}
$\dfrac{\partial}{\partial \mathcal U_0}\mathcal U(\zeta; \mathcal U_0) > 0$ for $\zeta\in[0,1]$, $\mathcal U_0\in[1, \mathcal U^+_{OA}]$.
\end{lemma}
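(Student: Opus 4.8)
The plan is to run the same variational-equation argument used in the proof of Lemma~\ref{lemma_dU_dzeta0}, but to exploit the fact that here the base point $\zeta_0 = 1$ is fixed and only the initial value $\mathcal U_0$ is perturbed. This makes the argument strictly shorter than the previous one, because the positivity needed at the base point is now automatic and does not require invoking condition~($\mathcal F$5).

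First I would set $w(\zeta) = \frac{\partial}{\partial \mathcal U_0}\mathcal U(\zeta; \mathcal U_0)$, which exists and is continuous by Proposition~\ref{prop:continuous_differential_by_parameter}. Differentiating the defining equation~\eqref{eq:U-char-of-c} with respect to the parameter $\mathcal U_0$ and interchanging the order of differentiation in $\zeta$ and $\mathcal U_0$ (licensed by the smooth dependence on parameters), I obtain the scalar linear variational equation
\[
\frac{d}{d\zeta} w(\zeta) = \frac{\partial}{\partial \mathcal U}\!\left(-\frac{\mathcal F_\zeta(\mathcal U, \zeta)}{\mathcal F_\mathcal U(\mathcal U, \zeta) - a_\zeta(\zeta)}\right)\!\bigg|_{\mathcal U = \mathcal U(\zeta;\mathcal U_0)} w(\zeta),
\]
with the initial condition $w(1) = \frac{\partial}{\partial\mathcal U_0}\mathcal U_0 = 1$. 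Integrating this linear ODE yields
\[
w(\zeta) = \exp\!\left(\int_1^\zeta \frac{\partial}{\partial \mathcal U}\!\left(-\frac{\mathcal F_\zeta(\mathcal U(s;\mathcal U_0), s)}{\mathcal F_\mathcal U(\mathcal U(s;\mathcal U_0), s) - a_\zeta(s)}\right) ds\right),
\]
which is strictly positive as soon as the exponent is finite. This is exactly the claimed inequality $\frac{\partial}{\partial\mathcal U_0}\mathcal U(\zeta;\mathcal U_0) > 0$, and in contrast to Lemma~\ref{lemma_dU_dzeta0} no sign analysis of the seed term is needed.

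The one step I expect to require care — the main obstacle — is the finiteness of the exponent, i.e. that the coefficient $\frac{\partial}{\partial\mathcal U}(\cdots)$ stays bounded along the whole characteristic. I would control this using the geometry already established. Every $\mathcal U$-characteristic issuing from $\Upsilon_{TA}$ lies in the cone $\angle_{t_{inj}}$, where Corollary~\ref{corollary:F-deriv-est-in-cone} gives $\mathcal F_\mathcal U(\mathcal U,\zeta) - a_\zeta(\zeta) > 0$, so the denominator never vanishes. Moreover, since the right-hand side of~\eqref{eq:U-char-of-c} is positive (property~($\mathcal F$4)), $\mathcal U(\cdot;\mathcal U_0)$ increases in $\zeta$, so integrating downward from $\zeta = 1$ keeps $\mathcal U$ between the invariant line $\mathcal U \equiv 1$ and the value $\mathcal U_0 \le \mathcal U^+_{OA}$; for $\mathcal U_0 > 1$ non-crossing with the invariant solution $\mathcal U \equiv 1$ keeps the characteristic in a compact subset of $\{\mathcal U > 1\}$ on which $\mathcal F \in \mathcal C^2$ and the denominator is separated from zero. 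Hence the integrand is continuous, the exponent is finite, and $w(\zeta) > 0$ on all of $[0,1]$. The boundary value $\mathcal U_0 = 1$ corresponds to the degenerate characteristic $\mathcal U\equiv1$, where the asserted positivity is read as a one-sided derivative and follows by passing to the limit $\mathcal U_0 \to 1^+$ with the continuity from Proposition~\ref{prop:continuous_differential_by_parameter}.
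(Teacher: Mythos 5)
Your proposal is correct and follows essentially the same route as the paper's proof: differentiate \eqref{eq:U-char-of-c} with respect to the parameter $\mathcal U_0$, integrate the resulting linear variational equation to obtain the exponential representation, and conclude from $\mathcal U_{\mathcal U_0}(1;\mathcal U_0)=1$ and the positivity of the exponential. The paper simply says ``repeating the scheme of proof of Lemma~\ref{lemma_dU_dzeta0}''; your added care about finiteness of the exponent (via Corollary~\ref{corollary:F-deriv-est-in-cone} and non-crossing with $\mathcal U\equiv 1$) and the degenerate case $\mathcal U_0=1$ fills in details the paper leaves implicit, but is not a different argument.
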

\begin{proof}
Repeating the scheme of proof of Lemma~\ref{lemma_dU_dzeta0}, we obtain
\begin{equation*}
    \mathcal U_{\mathcal U_0}(\zeta; \mathcal U_0) = \mathcal U_{\mathcal U_0}(1; \mathcal U_0) \exp\left(-\int\limits_{1}^\zeta \frac{\partial}{\partial\mathcal U}\left(\dfrac{\mathcal F_{\zeta}(\mathcal U(s; \mathcal U_0), s)}{\mathcal F_{\mathcal U}(\mathcal U(s; \mathcal U_0), s) - a_\zeta(s)}\right)ds\right).
\end{equation*}
It is clear that $\mathcal U_{\mathcal U_0}(1; \mathcal U_0) = 1$, and the exponent is positive, therefore the lemma is proved.
\end{proof}

\begin{lemma} 
\label{lemma:dpsi_dzeta0}
Let ($\mathcal F$5) hold for $\zeta_0$. Then 
$\dfrac{\partial}{\partial\zeta_0}\psi(\zeta;\zeta_0) < 0$ for $\zeta, \zeta_0 \in [0,1]$, $\zeta < \zeta_0$.
\end{lemma}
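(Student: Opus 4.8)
The plan is to write $\psi(\zeta;\zeta_0)$ explicitly via the fundamental theorem of calculus along the characteristic and then differentiate in the starting parameter $\zeta_0$. Using the initial condition $\psi(\zeta_0;\zeta_0) = \psi_\Phi(\zeta_0)$ from \eqref{eq:psi_Phi_definition} and integrating \eqref{eq:d_psi_d_zeta}, I would write
\[
\psi(\zeta;\zeta_0) = \psi_\Phi(\zeta_0) + \int_{\zeta_0}^{\zeta}\left[\frac{a_{\zeta\zeta}(s)a_\zeta(s)}{1+a_\zeta^2(s)} + \frac{a_{\zeta\zeta}(s)}{\mathcal F_{\mathcal U}(\mathcal U(s;\zeta_0),s) - a_\zeta(s)}\right]ds,
\]
where the integrand depends on $\zeta_0$ only through the characteristic value $\mathcal U(s;\zeta_0)$ solving \eqref{eq:U-char-of-c}. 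Differentiating in $\zeta_0$ by the Leibniz rule produces a boundary term at the lower limit $s=\zeta_0$ together with $\psi_\Phi'(\zeta_0)$, plus an interior integral of the $\zeta_0$-derivative of the integrand.

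The first key step is to show that the boundary contribution cancels. At $s = \zeta_0$ the characteristic starts on the Jouguet front, so $\mathcal U(\zeta_0;\zeta_0) = \mathcal U_J(\zeta_0)$ and \eqref{eq:Joguet_in_zeta_U} gives $\mathcal F_{\mathcal U}(\mathcal U(\zeta_0;\zeta_0),\zeta_0) - a_\zeta(\zeta_0) = a(\zeta_0)/\zeta_0 - a_\zeta(\zeta_0) = b(\zeta_0)$. Hence the Leibniz boundary term equals $\frac{a_{\zeta\zeta}(\zeta_0)a_\zeta(\zeta_0)}{1+a_\zeta^2(\zeta_0)} + \frac{a_{\zeta\zeta}(\zeta_0)}{b(\zeta_0)}$. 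On the other hand, differentiating \eqref{eq:psi_Phi_definition} and using the elementary identities $p = \zeta b$ and $p' = -\zeta a_{\zeta\zeta}$ (immediate from the definitions of $p$ and $b$) yields exactly $\psi_\Phi'(\zeta_0) = \frac{a_{\zeta\zeta}(\zeta_0)}{b(\zeta_0)} + \frac{a_\zeta(\zeta_0)a_{\zeta\zeta}(\zeta_0)}{1+a_\zeta^2(\zeta_0)}$. The two expressions coincide, so $\psi_\Phi'(\zeta_0)$ and the boundary term cancel, leaving
\[
\frac{\partial}{\partial\zeta_0}\psi(\zeta;\zeta_0) = \int_{\zeta_0}^{\zeta}\frac{\partial}{\partial\zeta_0}\left[\frac{a_{\zeta\zeta}(s)}{\mathcal F_{\mathcal U}(\mathcal U(s;\zeta_0),s) - a_\zeta(s)}\right]ds.
\]

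It then remains to determine the sign of this integral. Carrying out the $\zeta_0$-differentiation (the $\zeta_0$-dependence enters only through $\mathcal U(s;\zeta_0)$), the integrand equals
\[
-\frac{a_{\zeta\zeta}(s)\,\mathcal F_{\mathcal U\mathcal U}(\mathcal U(s;\zeta_0),s)\,\mathcal U_{\zeta_0}(s;\zeta_0)}{\big(\mathcal F_{\mathcal U}(\mathcal U(s;\zeta_0),s) - a_\zeta(s)\big)^2}.
\]
Each factor has a controlled sign: $a_{\zeta\zeta} < 0$ by (A3), $\mathcal F_{\mathcal U\mathcal U} < 0$ on characteristics in the cone by Corollary~\ref{corollary:F-deriv-est-in-cone}, the derivative $\mathcal U_{\zeta_0}$ has a fixed sign by Lemma~\ref{lemma_dU_dzeta0}, and the denominator is strictly positive since $\mathcal F_{\mathcal U} > a_\zeta$ in the cone. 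Thus the pointwise integrand has a definite sign, and composing it with the orientation of the integration interval — recall $\zeta < \zeta_0$, so $\int_{\zeta_0}^{\zeta}$ reverses the sign of the integral — gives $\frac{\partial}{\partial\zeta_0}\psi(\zeta;\zeta_0) < 0$, as claimed.

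I expect the delicate part to be precisely this final sign bookkeeping. The boundary cancellation is a clean computation once the Jouguet identity $\mathcal F_{\mathcal U} - a_\zeta = b$ and the relations $p = \zeta b$, $p' = -\zeta a_{\zeta\zeta}$ are in hand; but the conclusion hinges on correctly composing the sign of the pointwise integrand (built from the three known-sign factors $a_{\zeta\zeta}$, $\mathcal F_{\mathcal U\mathcal U}$, $\mathcal U_{\zeta_0}$) with the reversed orientation of the limits, and a single misplaced sign in any of these inputs or in the integration direction flips the inequality. I would therefore treat the verification of these individual signs — particularly the sign of $\mathcal U_{\zeta_0}$ supplied by Lemma~\ref{lemma_dU_dzeta0} under $(\mathcal F 5)$ — as the crux of the argument, and double-check it against the orientation convention before asserting the final negativity.
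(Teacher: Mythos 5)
Your route is the same as the paper's: write $\psi(\zeta;\zeta_0)$ in integral form from \eqref{eq:d_psi_d_zeta}, apply the Leibniz rule, cancel the boundary term against $\psi_\Phi'(\zeta_0)$, and read the sign of the remaining interior integral. Your boundary cancellation (via $p=\zeta b$, $p'=-\zeta a_{\zeta\zeta}$ and the Jouguet identity $\mathcal F_{\mathcal U}-a_\zeta=b$) is correct and identical to the paper's. Your differentiated integrand is in fact \emph{more} correct than the paper's displayed one: the paper's formula drops the minus sign and the square of the denominator, whereas yours,
\[
-\frac{a_{\zeta\zeta}(s)\,\mathcal F_{\mathcal U\mathcal U}\big(\mathcal U(s;\zeta_0),s\big)\,\mathcal U_{\zeta_0}(s;\zeta_0)}{\big(\mathcal F_{\mathcal U}(\mathcal U(s;\zeta_0),s)-a_\zeta(s)\big)^2},
\]
is the true derivative.

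However, the one step you explicitly defer --- ``composing the signs'' --- is exactly the step that fails, so the proof does not close as written. Take the inputs you cite at face value: $a_{\zeta\zeta}<0$ by (A3), $\mathcal F_{\mathcal U\mathcal U}<0$ by Corollary~\ref{corollary:F-deriv-est-in-cone}, and $\mathcal U_{\zeta_0}>0$ as Lemma~\ref{lemma_dU_dzeta0} literally states. Then $a_{\zeta\zeta}\mathcal F_{\mathcal U\mathcal U}\mathcal U_{\zeta_0}>0$, so with the leading minus your integrand is \emph{negative}, and integrating a negative quantity from $\zeta_0$ down to $\zeta<\zeta_0$ produces a \emph{positive} number: you would conclude $\partial_{\zeta_0}\psi>0$, the opposite of the claim. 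The resolution is that the sign printed in Lemma~\ref{lemma_dU_dzeta0} cannot be taken at face value: it contradicts Proposition~\ref{prop:full_cover_of_zeta_U_plane}, which under ($\mathcal F$5) gives $\frac{d}{d\zeta}\mathcal U_J<\frac{d}{d\zeta}\mathcal U$ on $\Upsilon_J$; differentiating the terminal condition $\mathcal U(\zeta_0;\zeta_0)=\mathcal U_J(\zeta_0)$ then yields $\mathcal U_{\zeta_0}(\zeta_0;\zeta_0)=\mathcal U_J'(\zeta_0)-\mathcal U_\zeta(\zeta_0;\zeta_0)<0$, and the exponential propagation formula in that lemma's own proof spreads this sign to all $\zeta$, so in fact $\mathcal U_{\zeta_0}<0$ (geometrically: trajectories cross $\Upsilon_J$ from below exactly once, so raising $\zeta_0$ lowers the trajectory). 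With $\mathcal U_{\zeta_0}<0$ your integrand becomes positive and the reversed orientation gives $\partial_{\zeta_0}\psi<0$ as desired. In the paper these two sign slips --- the flipped inequality in Lemma~\ref{lemma_dU_dzeta0} and the dropped minus/square in the integrand --- cancel, so its conclusion stands; your version corrects the second slip but inherits the first, so to complete the argument you must carry out the final composition explicitly and justify $\mathcal U_{\zeta_0}<0$ from Proposition~\ref{prop:full_cover_of_zeta_U_plane} rather than quoting Lemma~\ref{lemma_dU_dzeta0} verbatim.
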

\begin{proof}
First, we rewrite \eqref{eq:d_psi_d_zeta} in the integral form
    \[
    \psi(\zeta;\zeta_0) = \psi_\Phi(\zeta_0) + \int\limits_{\zeta_0}^\zeta\frac{a_{\zeta\zeta}(s) a_{\zeta}(s)}{1+ a_\zeta^2(s)} +
    \frac{a_{\zeta\zeta}(s)}{\mathcal F_{\mathcal U}(\mathcal U(s;\zeta_0), s) - a_\zeta(s)} ds,
    \]
and differentiate it with respect to $\zeta_0$:
    \[
    \frac{\partial \psi}{\partial\zeta_0}(\zeta;\zeta_0) = \psi'_\Phi(\zeta_0) - \psi_\zeta(\zeta_0;\zeta_0) + \int\limits_{\zeta_0}^\zeta \frac{\partial}{\partial \zeta_0}\left(\frac{a_{\zeta\zeta}(s)}{\mathcal F_{\mathcal U}(\mathcal U(s;\zeta_0), s) - a_\zeta(s)}\right) ds.
    \]
Direct calculations show that
    \begin{align*}
    \psi'_\Phi(\zeta) &= -\dfrac{p_\zeta(\zeta)}{p(\zeta)} + \dfrac{a_\zeta(\zeta)a_{\zeta\zeta}(\zeta)}{1 + a^2_\zeta(\zeta)} = 
        \dfrac{(\zeta + a(\zeta)a_\zeta(\zeta))a_{\zeta\zeta}(\zeta)}{(1 + a^2_\zeta(\zeta)) (a(\zeta)-\zeta a_\zeta(\zeta))}
    \end{align*}
and due to the Jouguet condition $\mathcal F_{\mathcal U}(\mathcal U(\zeta_0;\zeta_0), \zeta_0) = \dfrac{a(\zeta_0)}{\zeta_0}$ we obtain
    \begin{align*}
    \psi_\zeta(\zeta_0;\zeta_0) &= \frac{a_{\zeta\zeta}(\zeta_0) a_{\zeta}(\zeta_0)}{1+ a_\zeta^2(\zeta_0)} +
    \frac{a_{\zeta\zeta}(\zeta_0)}{\mathcal F_{\mathcal U}(\mathcal U(\zeta_0;\zeta_0), \zeta_0) - a_\zeta(\zeta_0)} \\
    {} &= \dfrac{(\zeta_0 + a(\zeta_0)a_\zeta(\zeta_0))a_{\zeta\zeta}(\zeta_0)}{(1+ a_\zeta^2(\zeta_0)) (a(\zeta_0)-\zeta_0 a_\zeta(\zeta_0))},
    \end{align*}
therefore 
\[
\psi'_\Phi(\zeta_0) -\psi_\zeta(\zeta_0;\zeta_0) = 0.
\]
Thus,
\[
\frac{\partial \psi}{\partial\zeta_0}(\zeta;\zeta_0) = 
\int\limits_{\zeta_0}^\zeta \frac{a_{\zeta\zeta}(s)}{\mathcal F_{\mathcal U}(\mathcal U(s;\zeta_0), s) - a_\zeta(s)}
\mathcal F_{\mathcal U\mathcal U}(\mathcal U(s;\zeta_0), s)\frac{\partial \mathcal U}{\partial\zeta_0} (s;\zeta_0)ds.
\]
Due to Lemma~\ref{lemma_dU_dzeta0}, Corollary~\ref{corollary:F-deriv-est-in-cone} and (A3) the integrand is positive, thus for $\zeta < \zeta_0$ the integral is negative.
Therefore, the lemma is proved.
\end{proof}

\begin{lemma}
\label{lemma:dpsi_dU0}
$\dfrac{\partial}{\partial \mathcal U_0}\psi(\zeta; \mathcal U_0) > 0$  for $\zeta\in[0,1]$, $\mathcal U_0\in[1, \mathcal U^+_{OA}]$.
\end{lemma}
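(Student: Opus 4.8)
The plan is to mirror the argument of Lemma~\ref{lemma:dpsi_dzeta0}, though the computation is in fact lighter: here every characteristic emanates from the \emph{fixed} segment $\Upsilon_{TA}$ at $\zeta = 1$ rather than from the moving point $(\Phi(x_0), x_0)$, so no differentiation of a variable integration limit is needed. First I would write the solution of \eqref{eq:d_psi_d_zeta} in integral form with the constant lower limit $\zeta = 1$ and the initial value \eqref{eq:definition_of_psi_TA},
\[
\psi(\zeta;\mathcal U_0) = \psi_{TA}(\mathcal U_0) + \int\limits_{1}^\zeta \left(\frac{a_{\zeta\zeta}(s)a_\zeta(s)}{1 + a_\zeta^2(s)} + \frac{a_{\zeta\zeta}(s)}{\mathcal F_{\mathcal U}(\mathcal U(s;\mathcal U_0), s) - a_\zeta(s)}\right)ds,
\]
and differentiate with respect to $\mathcal U_0$. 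The first summand of the integrand does not depend on $\mathcal U_0$ and drops out, and since the lower limit is constant there is no boundary contribution from it, leaving
\[
\frac{\partial \psi}{\partial \mathcal U_0}(\zeta;\mathcal U_0) = \psi_{TA}'(\mathcal U_0) - \int\limits_{1}^\zeta \frac{a_{\zeta\zeta}(s)\,\mathcal F_{\mathcal U\mathcal U}(\mathcal U(s;\mathcal U_0), s)}{\big(\mathcal F_{\mathcal U}(\mathcal U(s;\mathcal U_0), s) - a_\zeta(s)\big)^2}\,\frac{\partial \mathcal U}{\partial \mathcal U_0}(s;\mathcal U_0)\,ds.
\]

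Next I would fix the sign of the integral term. In the integrand, $a_{\zeta\zeta}(s) < 0$ by (A3) and $\mathcal F_{\mathcal U\mathcal U} < 0$ along the characteristic by Corollary~\ref{corollary:F-deriv-est-in-cone}, so their product is positive; the denominator $(\mathcal F_{\mathcal U} - a_\zeta)^2$ is positive (again $\mathcal F_{\mathcal U} - a_\zeta > 0$ in the cone by Corollary~\ref{corollary:F-deriv-est-in-cone}); and $\partial \mathcal U/\partial \mathcal U_0 > 0$ by Lemma~\ref{lemma_U_derivative_by_U_0}. Hence the full integrand is positive. Because the characteristic runs from $\zeta = 1$ down to $\zeta \le 1$, reversing the orientation turns the leading minus sign into a positive contribution, so the integral term is nonnegative (strictly positive for $\zeta < 1$).

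Finally — and this is the only genuinely new point relative to Lemma~\ref{lemma:dpsi_dzeta0}, where the analogous boundary term cancelled against a moving-limit term — I would show that $\psi_{TA}'(\mathcal U_0) > 0$ outright. Differentiating \eqref{eq:definition_of_psi_TA} gives
\[
\psi_{TA}'(\mathcal U_0) = -\frac{\mathcal F_{\mathcal U\mathcal U}(\mathcal U_0, 1)}{\mathcal F_{\mathcal U}(\mathcal U_0, 1) - a_\zeta(1)},
\]
and it remains to control the two factors on $\mathcal U_0 \in [1, \mathcal U^+_{OA}]$. The denominator is positive: by \eqref{eq:U+_cf} together with the monotonicity of $\mathcal F_{\mathcal U}(\cdot, 1)$ on $[1, \mathcal U^+_{OA}] \subset (1, \mathcal U^{\max}(1))$ we have $\mathcal F_{\mathcal U}(\mathcal U_0, 1) \ge v(1,0) = a(1)$, while \eqref{inq:ads_conv} at $\zeta = 1$ gives $a_\zeta(1) < a(1)$. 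In particular $\mathcal F_{\mathcal U}(\mathcal U_0, 1) > 0$ on this range, so ($\mathcal F$3) forces $\mathcal F_{\mathcal U\mathcal U}(\mathcal U_0, 1) < 0$; thus both factors yield $\psi_{TA}'(\mathcal U_0) > 0$. Combining the strictly positive boundary term with the nonnegative integral gives $\partial \psi/\partial \mathcal U_0 > 0$ on all of $[0,1]$. The main (and essentially only) obstacle is this verification that the boundary term is strictly positive rather than cancelling; the rest is a direct transcription of the sign bookkeeping already established in the cone.
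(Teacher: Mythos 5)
Your proof is correct and takes essentially the same approach as the paper's: the same integral form anchored at $\psi_{TA}(\mathcal U_0)$, the same sign bookkeeping for the integral term via Lemma~\ref{lemma_U_derivative_by_U_0}, Corollary~\ref{corollary:F-deriv-est-in-cone} and (A3), and the same formula for $\psi_{TA}'$. The only difference is that you spell out in detail why $\psi_{TA}'(\mathcal U_0) > 0$ (via \eqref{eq:U+_cf}, \eqref{inq:ads_conv} and ($\mathcal F$3)), a step the paper asserts without elaboration.
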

\begin{proof}
This follows immediately from \eqref{eq:d_psi_d_zeta}. Let us rewrite it in integral form:
\[
\psi(\zeta; \mathcal U_0) = \psi_{TA}(\mathcal U_0) + \int\limits_{1}^\zeta \frac{a_{\zeta\zeta}(s)a_\zeta(s)}{1 + a_\zeta^2(s)} + \frac{a_{\zeta\zeta}(s)}{\mathcal F_{\mathcal U}(\mathcal U(s; \mathcal U_0), s) - a_\zeta(s)} \, ds.
\]
The integrand is negative and decreasing with respect to $\mathcal U_0$ due to Lemma~\ref{lemma_U_derivative_by_U_0} and Corollary~\ref{corollary:F-deriv-est-in-cone}. In the integration limits $\zeta \leqslant 1$, therefore, the integral is increasing. Finally, from the definition \eqref{eq:definition_of_psi_TA} we see that $\psi_{TA}$ is strictly increasing:
\[
\psi'_{TA}(\mathcal U_0) = -\dfrac{\mathcal F_{\mathcal U\mathcal U}(\mathcal U_0, 1)}{\mathcal F_{\mathcal U}(\mathcal U_0, 1) - a_\zeta(1)} > 0.
\]
\end{proof}

\subsection{Solution for $\mathcal U$ in the cone}

\subsubsection{Full Jouguet case}

In this subsection we assume ($\mathcal F$5) holds for all $\zeta$. The solution, therefore, requires the Jouguet condition at every point of the chemical shock front. We prove that these Jouguet characteristics cover the cone $\angle_{t_{inj}}$ with no gaps or intersections and give us the solution satisfying all necessary conditions for the uniqueness theorem.

First, we demonstrate that there are no gaps in $(\zeta, \mathcal U)$ plane.
\begin{proposition}
\label{prop:full_cover_of_zeta_U_plane}
Let the condition ($\mathcal F$5) hold true for $\zeta(\Phi(x), x)$ at every point $(\Phi(x), x)$  of the chemical shock front. Then the images of $\mathcal U$-characteristics in $\angle_{t_{inj}}$ (continued from TA and constructed via the Jouguet condition \eqref{eq:Joguet_cond} from each point of the chemical shock front $(\Phi(x),x)$) when mapped onto the $(\zeta, \mathcal U)$ plane fill the whole area
$$\overline {\Upsilon}_J = \{(\zeta, \mathcal U): \zeta\in[0,1], \mathcal U \in [1, \mathcal U_J(\zeta)]\}$$
under the graph of $\mathcal U_J$.
Each of $\mathcal U$-characteristic images starts at the segment $\zeta = 0$, $\mathcal U \in [0,\mathcal U_J(0)]$ and arrives either at $\Upsilon_J$ or at $\Upsilon_{TA}$.    
\end{proposition}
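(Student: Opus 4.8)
The plan is to read the $\mathcal U$-characteristic images as integral curves of the single scalar ODE \eqref{eq:U-char-of-c} on the $(\zeta,\mathcal U)$ plane (this identification is exactly what the splitting technique and the mapping \eqref{eq:mapping_into_zeta_U} provide), and to show that the two anchored families together foliate $\overline{\Upsilon}_J$: the ``$TA$ family'' $\mathcal U(\cdot;\mathcal U_0)$ with $\mathcal U_0\in[1,\mathcal U^+_{OA}]$ starting on $\Upsilon_{TA}$, and the ``Jouguet family'' $\mathcal U(\cdot;\zeta_0)$ with $\zeta_0\in(0,1]$ starting on $\Upsilon_J$. First I would record the structural facts. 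By Corollary~\ref{corollary:F-deriv-est-in-cone} and ($\mathcal F$4) the right-hand side of \eqref{eq:U-char-of-c} is $\mathcal C^1$ and strictly positive on the interior of $\overline{\Upsilon}_J$, and is extended by $0$ along $\mathcal U=1$; hence every integral curve is strictly increasing in $\zeta$, the constant function $\mathcal U\equiv 1$ is an integral curve forming the lower edge, and standard uniqueness for the smooth field in the interior guarantees that distinct integral curves never cross. In particular no curve in the strip can reach $\mathcal U=1$ from above.

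Next I would pin down the two families and their common member. The $TA$ curve with $\mathcal U_0=\mathcal U^+_{OA}$ and the Jouguet curve with $\zeta_0=1$ both pass through the corner $(1,\mathcal U^+_{OA})=(1,\mathcal U_J(1))$, so by uniqueness they are one and the same integral curve; this shared curve is the hinge that will glue the two families into one continuous sheet, while $\mathcal U_0=1$ reproduces the lower edge $\mathcal U\equiv 1$. I would then verify that both families live inside $\overline{\Upsilon}_J$. For the Jouguet family this is the content of the ($\mathcal F$5) slope comparison already used in Lemma~\ref{lem:Joguet_char_above_shock}: on $\Upsilon_J$ the difference between the integral-curve slope from \eqref{eq:U-char-of-c} and the slope of $\mathcal U_J$ equals the left-hand side of ($\mathcal F$5) divided by $b(\zeta)\,\mathcal F_{\mathcal U\mathcal U}<0$, hence is strictly positive. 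Thus the Jouguet curve crosses $\Upsilon_J$ transversally at its anchor and descends strictly below it for $\zeta<\zeta_0$; since every crossing of $\Upsilon_J$ is transversal in this same direction, a curve can meet $\Upsilon_J$ at most once and therefore stays below it afterwards. The $TA$ curves start below $\Upsilon_J$ at $\zeta=1$ and, by the same at-most-one-crossing argument, remain below it. Together with the a priori bounds $1\leq\mathcal U\leq\mathcal U_J(\zeta)$ and smoothness of the field, each curve extends down to $\zeta=0$, landing in $[1,\mathcal U_J(0)]$.

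The core is then a one-parameter sweep at a fixed slice. Fix $\zeta_*\in[0,1)$ and look at the value attained at $\zeta=\zeta_*$ along each curve. By Proposition~\ref{prop:continuous_differential_by_parameter} together with the strict monotonicities of Lemmas~\ref{lemma_U_derivative_by_U_0} and~\ref{lemma_dU_dzeta0}, the maps $\mathcal U_0\mapsto\mathcal U(\zeta_*;\mathcal U_0)$ on $[1,\mathcal U^+_{OA}]$ and $\zeta_0\mapsto\mathcal U(\zeta_*;\zeta_0)$ on $[\zeta_*,1]$ are homeomorphisms onto closed intervals. The first runs (monotonically) between $\mathcal U(\zeta_*;\,\mathcal U_0{=}1)=1$ and the junction value $\mathcal U(\zeta_*;\,\mathcal U_0{=}\mathcal U^+_{OA})$, covering $[1,\text{junction}]$; the second runs between $\mathcal U(\zeta_*;\,\zeta_0{=}\zeta_*)=\mathcal U_J(\zeta_*)$ and $\mathcal U(\zeta_*;\,\zeta_0{=}1)$, the latter being the \emph{same} junction value produced by the shared corner curve. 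Because the families never cross, these two images meet only at the junction value, so their union is exactly the full segment $[1,\mathcal U_J(\zeta_*)]$, covered bijectively. Letting $\zeta_*$ range over $[0,1)$, and treating $\zeta=1$ (where the slice is precisely $\Upsilon_{TA}$) directly, shows the characteristic images fill $\overline{\Upsilon}_J$ with no gaps and, by injectivity on each slice, with no intersections; each image runs from $\zeta=0$ up to its anchor on $\Upsilon_{TA}$ or $\Upsilon_J$.

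The step I expect to be the main obstacle is the gluing in the previous paragraph: confirming that the $TA$-interval and the Jouguet-interval abut exactly at $\mathcal U(\zeta_*;\,\mathcal U_0{=}\mathcal U^+_{OA})$ with neither gap nor overlap, so that the combined piecewise sweep is a single continuous bijection onto $[1,\mathcal U_J(\zeta_*)]$. This is where ($\mathcal F$5) is indispensable, since it is exactly what forces the Jouguet family to descend \emph{into} $\overline{\Upsilon}_J$ rather than leave it and what yields the strict, correctly oriented monotonicity of $\zeta_0\mapsto\mathcal U(\zeta_*;\zeta_0)$; the smooth dependence on parameters of Proposition~\ref{prop:continuous_differential_by_parameter} is what makes the two pieces match continuously across the shared corner curve. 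A secondary technical point, which I would isolate as a short remark, is the uniform extendability of every integral curve down to $\zeta=0$ within the strip $1\leq\mathcal U\leq\mathcal U_J(\zeta)$; this follows from the a priori bounds and from the non-crossing of the enclosing boundary curves $\mathcal U\equiv 1$ and $\Upsilon_J$.
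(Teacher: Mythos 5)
Your overall strategy---reading the characteristic images as integral curves of \eqref{eq:U-char-of-c}, using ($\mathcal F$5) to force the Jouguet curves to descend into $\overline{\Upsilon}_J$, and then filling the region---is essentially the paper's, and your slope computation on $\Upsilon_J$ (that the difference between the integral-curve slope and $\mathcal U_J'$ equals the left-hand side of ($\mathcal F$5) divided by $b(\zeta)\,\mathcal F_{\mathcal U\mathcal U}<0$) is exactly the comparison the paper makes between \eqref{eq:U-char-of-c} and \eqref{eq:d-dzeta-U-Phi}. Where you differ is in the filling mechanism: the paper simply runs the unique integral curve (Picard--Lindel\"of, valid where the field is locally Lipschitz) through an arbitrary point of $\overline{\Upsilon}_J$ forward until it exits through $\Upsilon_J$ or $\Upsilon_{TA}$, whereas you sweep each slice $\zeta=\zeta_*$ using continuity and monotonicity in the parameters $\mathcal U_0$ and $\zeta_0$; both routes are viable.

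There is, however, a genuine gap at the lower boundary $\mathcal U=1$. You assert that ``standard uniqueness for the smooth field in the interior'' implies that no curve in the strip can reach $\mathcal U=1$ from above. Interior uniqueness gives no such thing: two solutions can merge at a boundary point where the field fails to be Lipschitz without ever crossing in the interior (the model case $y'=\sqrt{y}$, where $y\equiv 0$ and $y=(x-x_0)^2/4$ meet at the boundary). This is not a removable technicality here: by ($\mathcal F$2), $\mathcal F_{\mathcal U}(\mathcal U,\zeta)\to+\infty$ as $\mathcal U\to 1$, so the right-hand side of \eqref{eq:U-char-of-c}, extended by zero, vanishes at $\mathcal U=1$ but is genuinely non-Lipschitz there---precisely the square-root-like situation in which nonconstant solutions can emanate from the boundary for a generic field. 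The paper flags this explicitly (``treating this case separately is necessary, because the right-hand side is not Lipschitz there'') and closes it with a dedicated argument: rewriting \eqref{eq:U-char-of-c} as $\frac{d\mathcal U}{d\zeta}=\frac{1}{a_\zeta(\zeta)}\frac{d}{d\zeta}\mathcal F(\mathcal U(\zeta),\zeta)$, integrating from a hypothetical touching point, and using (A3), ($\mathcal F$4) and the blow-up of the difference quotient $\bigl(\mathcal F(\mathcal U,1)+1\bigr)/(\mathcal U-1)$ guaranteed by ($\mathcal F$2) to obtain a contradiction. Without this step (or an equivalent barrier argument exploiting the specific structure of $\mathcal F$), the parts of your proof that depend on it are unsupported: the injectivity of your slice sweep, and the conclusion that every characteristic image runs all the way back to $\zeta=0$ instead of being absorbed into the edge $\mathcal U\equiv 1$ at some positive $\zeta$. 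Supply that argument and the rest of your construction stands.
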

\begin{proof}
The condition ($\mathcal F$5) implies that all integral curves arrive at $\Upsilon_J$ from below. 
Indeed, comparing the right-hand side of \eqref{eq:d-dzeta-U-Phi} with the right-hand side of \eqref{eq:U-char-of-c}, due to ($\mathcal F$5) we see that $\frac{d}{d\zeta}\mathcal U_J < \frac{d}{d\zeta}\mathcal U$ and hence all integral curves given by \eqref{eq:U-char-of-c} intersect the graph of $\mathcal U_J$ from below. 

Let us prove that the integral curves $\mathcal U(\zeta)$ cannot start from the lower boundary, where $\mathcal U = 1$ (treating this case separately is necessary, because the right-hand side is not Lipschitz there). For this purpose we rewrite the equation \eqref{eq:U-char-of-c} in the form
\[
\mathcal F_{\mathcal U}(\mathcal U(\zeta), \zeta) \frac{d\mathcal U}{d \zeta} + \mathcal F_\zeta(\mathcal U(\zeta),\zeta) =   a_\zeta(\zeta)\frac{d\mathcal U}{d \zeta}
\]
and express the derivative
\[
 \frac{d\mathcal U}{d \zeta} = \frac{1}{a_\zeta(\zeta)}\frac{d}{d\zeta}\mathcal F(\mathcal U(\zeta), \zeta).
\]
The estimate of the adsorption derivative due to (A3) leads to an estimate
\[
\frac{d\mathcal U}{d\zeta}(\zeta) \geq \frac{1}{a_\zeta(0)}\frac{d}{d\zeta}\mathcal F(\mathcal U(\zeta), \zeta).
\]
If $\mathcal U(\zeta)$ begins at some point $(\zeta_0, \mathcal U(\zeta_0))$, where $\mathcal U(\zeta_0) = 1$, then integrating the inequality from $\zeta_0$ to $\zeta$ and keeping in mind ($\mathcal F$4) we obtain
\[
\mathcal U(\zeta) - 1\geq \frac{1}{a_\zeta(0)}\left(
\mathcal F(\mathcal U(\zeta),\zeta) + 1\right)\geq \frac{1}{a_\zeta(0)}\left(
\mathcal F(\mathcal U(\zeta),1) + 1\right),
\]
therefore for any $\mathcal U(\zeta) \neq 1$ we derive
\[
\frac{\mathcal F(\mathcal U(\zeta), 1) + 1}{\mathcal U(\zeta) - 1}\leq a_\zeta(0).
\]
Due to ($\mathcal F$2) the left-hand side tends to $+\infty$ as $\zeta\to 1$ which leads to a contradiction. Therefore our assumptions $\mathcal U(\zeta_0) = 1$, $\mathcal U(\zeta) \neq 1$ cannot hold at the same time. Thus, only the trajectory $\mathcal U(\zeta) \equiv 1$ could have the value $1$.

Due to the Picard--Lindel\"of theorem the integral curves generated by the equation \eqref{eq:U-char-of-c} fill the whole area $\overline {\Upsilon}_J$ under the graph of $\mathcal U_J$ 
since the right-hand side 
is a locally Lipschitz function in $\overline{\Upsilon}_J \setminus \{(\zeta, \mathcal U): \zeta\in[0,1], \mathcal U = 1\}$.
\end{proof}

Now we can bring this property back to the $(\varphi, x)$ plane.

\begin{theorem}
\label{thm1}
Let the condition ($\mathcal F$5) hold true for $\zeta(\Phi(x), x)$ at every point $(\Phi(x), x)$  of the chemical shock front. Then the $\mathcal U$-characteristics in $\angle_{t_{inj}}$ (continued from TA and constructed via the Jouguet condition \eqref{eq:Joguet_cond} from each point of the chemical shock front $(\Phi(x),x)$) fill the whole area $\angle_{t_{inj}}$ without intersections and give us a pice-wise continuously differentiable solution $\mathcal U(\varphi, x)$.
\end{theorem}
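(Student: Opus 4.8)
The plan is to transport everything into the $(\zeta,\psi)$ coordinates of the previous subsection and reduce the theorem to a one-dimensional monotonicity statement. First I would check that the map $(\varphi,x)\mapsto(\zeta,\psi)$, with $\zeta=g\big((\varphi-t_{inj})/x\big)$ from \eqref{eq:zeta_in_cone_definition} and $\psi$ from \eqref{eq:psi_def}, is a $\mathcal C^1$-diffeomorphism of $\angle_{t_{inj}}\setminus\{T\}$ onto the strip $\{(\zeta,\psi)\colon \zeta\in[0,1],\ \psi\leqslant\psi_\Phi(\zeta)\}$. This is merely the polar-type change of variables centred at $T=(t_{inj},0)$: the angular direction is in one-to-one correspondence with $\zeta$ because $a_\zeta$ is strictly monotone by (A2)--(A3), $\psi$ is the logarithmic radius, and the outer edge $\psi=\psi_\Phi(\zeta)$ is exactly the image of the chemical shock front computed in \eqref{eq:psi_Phi_definition}. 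Along any $\mathcal U$-characteristic in $\angle_{t_{inj}}$ the quantity $\zeta$ is strictly decreasing in $x$, by \eqref{eq:d_zeta_d_x} and Corollary~\ref{corollary:F-deriv-est-in-cone}; hence each characteristic is a graph over $\zeta$ and is faithfully encoded by the pair $\big(\mathcal U(\zeta;\cdot),\psi(\zeta;\cdot)\big)$ of Definition~\ref{def:mapping_and_solution_shorthands}.

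I would then splice the two families of characteristics into one continuous one-parameter family. By Proposition~\ref{prop:full_cover_of_zeta_U_plane} the $(\zeta,\mathcal U)$-images foliate $\overline{\Upsilon}_J$ without crossing, each integral curve of \eqref{eq:U-char-of-c} meeting either $\Upsilon_{TA}$ or $\Upsilon_J$. I would parametrise the family by a single $\tau$ running through $\mathcal U_0\in[1,\mathcal U^+_{OA}]$ for the characteristics issuing from $TA$ and then through $\zeta_0\in[1,0]$ for those built by the Jouguet condition, the two pieces being glued at the point $A$, where $\mathcal U_0=\mathcal U^+_{OA}=\mathcal U_J(1)$ coincides with $\zeta_0=1$. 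Proposition~\ref{prop:continuous_differential_by_parameter} guarantees that $\mathcal U(\zeta;\tau)$ and $\psi(\zeta;\tau)$ depend continuously (indeed $\mathcal C^1$) on $\tau$, and in particular that the sweep is continuous across the junction.

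The heart of the argument is to fix a direction $\zeta^\ast\in(0,1)$ and examine $\tau\mapsto\psi(\zeta^\ast;\tau)$ over those characteristics that actually reach $\zeta^\ast$, namely all $TA$-characteristics and the Jouguet characteristics with $\zeta_0\geqslant\zeta^\ast$. This map is strictly increasing: on the $TA$-part by Lemma~\ref{lemma:dpsi_dU0}, and on the Jouguet part by Lemma~\ref{lemma:dpsi_dzeta0}, which gives $\partial_{\zeta_0}\psi<0$ precisely when $\zeta^\ast<\zeta_0$. Its range is exactly $(-\infty,\psi_\Phi(\zeta^\ast)]$: the upper endpoint $\psi_\Phi(\zeta^\ast)$ is attained by the Jouguet characteristic with $\zeta_0=\zeta^\ast$, which starts on the front itself, while the lower limit $-\infty$ follows because in \eqref{eq:definition_of_psi_TA} one has $\mathcal F_{\mathcal U}(\mathcal U_0,1)\to+\infty$ as $\mathcal U_0\to1$ by ($\mathcal{F}$2), so $\psi_{TA}(\mathcal U_0)\to-\infty$, while the remaining integral in $\psi(\zeta^\ast;\mathcal U_0)$ stays bounded. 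Strict monotonicity forbids two distinct characteristics from sharing a value of $\psi$ at $\zeta^\ast$, so no two characteristics intersect; continuity together with the full range $(-\infty,\psi_\Phi(\zeta^\ast)]$, via the intermediate value theorem, shows that every point of the strip, and hence by the first paragraph every point of $\angle_{t_{inj}}$, lies on exactly one characteristic.

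It remains to read off regularity. Since $\partial_\tau\psi(\zeta;\tau)>0$, the map $(\zeta,\tau)\mapsto(\zeta,\psi)$ has nonvanishing Jacobian, so the inverse function theorem expresses $\tau$ as a $\mathcal C^1$ function of $(\zeta,\psi)$; composing with the diffeomorphism of the first paragraph and with the $\mathcal C^1$ dependence of $\mathcal U(\zeta;\tau)$ on $(\zeta,\tau)$ yields a function $\mathcal U(\varphi,x)$ that solves \eqref{eq:U-lagr-eqn2} by construction and is $\mathcal C^1$ off a locally finite set of curves, most notably the single characteristic through $A$, across which the parametrisation type switches and derivatives may jump. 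This gives the claimed piece-wise $\mathcal C^1$ regularity. I expect the main obstacle to be exactly the bookkeeping of this splice: verifying that the two sweeps glue into one globally strictly monotone and continuous family and that the $\psi$-range closes up to the full interval $(-\infty,\psi_\Phi(\zeta^\ast)]$, in particular the degenerate limit $\mathcal U\equiv1$ at the apex $T$, where \eqref{eq:U-char-of-c} is not Lipschitz and the limit $\psi\to-\infty$ must be taken with the same care as in the proof of Proposition~\ref{prop:full_cover_of_zeta_U_plane}.
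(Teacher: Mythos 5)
Your proposal is correct and follows essentially the same route as the paper's proof: it rests on the same ingredients (the polar-type $(\zeta,\psi)$ coordinates, Proposition~\ref{prop:full_cover_of_zeta_U_plane}, the monotonicity Lemmas~\ref{lemma:dpsi_dzeta0} and~\ref{lemma:dpsi_dU0}, smooth dependence from Proposition~\ref{prop:continuous_differential_by_parameter}, and an inverse-function-theorem Jacobian argument for piecewise $\mathcal C^1$ regularity). Your packaging of the two families into a single monotone parameter $\tau$ with an intermediate-value-theorem sweep at each fixed $\zeta^\ast$ is just a mild reorganization of the paper's argument that boundaries of $\overline{\Upsilon}_J$ map to boundaries of $\angle_{t_{inj}}$ and that the monotone continuous parametrization maps the interior one-to-one.
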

\begin{proof}
Note that the characteristic from the point $A$ as the continuation of the characteristic $OA$, and the characteristic from $A$ constructed via the Jouguet condition give us the same curve due to \eqref{eq:dPhi_dx_at_A}. Therefore we have two families of characteristics that have the $A$ characteristic in common.

Similar to Proposition~\ref{prop:full_cover_of_zeta_U_plane} we first map all characteristics into the $(\zeta, \mathcal U)$ plane using the mapping \eqref{eq:mapping_into_zeta_U}. In the $(\zeta, \mathcal U)$ plane the curves $\mathcal U(\zeta; \mathcal U_0)$ correspond to the characteristics continued from TA, while the curves $\mathcal U(\zeta; \zeta_0)$ correspond to the Jouguet characteristics (see Definition~\ref{def:mapping_and_solution_shorthands} for the solution shorthands we use here).

Due to Proposition~\ref{prop:full_cover_of_zeta_U_plane} together the curves $\mathcal U(\zeta; \mathcal U_0)$ and $\mathcal U(\zeta; \zeta_0)$ fill the whole area $\overline{\Upsilon}_J$.

Now, we consider the solutions $\psi(\zeta; \mathcal U_0)$ and $\psi(\zeta; \zeta_0)$ and their inverse mapping
\[
x = \frac{e^\psi}{\sqrt{a_\zeta^2(\zeta) + 1}}, \qquad \varphi = t_{inj} + \frac{e^{\psi}a_\zeta(\zeta)}{\sqrt{a_\zeta^2(\zeta) + 1}}
\]
back into the $(\varphi, x)$ plane. Due to Proposition~\ref{prop:continuous_differential_by_parameter}, Lemma~\ref{lemma:dpsi_dzeta0} and Lemma~\ref{lemma:dpsi_dU0} they are continuous and monotone with respect to their parameters. On the upper boundary $\Upsilon_J$ the functions $\psi(\zeta; \zeta_0)$ have boundary values $\psi(\zeta_0; \zeta_0) = \psi_\Phi(\zeta_0)$, so when we go back to $(\varphi, x)$ coordinates, this boundary will map back into the shock front $\Phi$. Similarly, on the right boundary of $\overline{\Upsilon}_J$ we have $\psi(1; \mathcal U_0) = \psi_{TA}(\mathcal U_0)$, which will map back into $TA$. On the lower boundary we have $\mathcal U \equiv 1$, so when approaching that boundary, $\psi(1; \mathcal U_0) \to -\infty$ as $\mathcal U_0\to 1$, and the right-hand side of \eqref{eq:d_psi_d_zeta} is bounded, so $\psi(\zeta; \mathcal U_0) \to -\infty$ as $\mathcal U_0\to 1$. Therefore, the lower boundary will map back into the point $T$. Finally, the left boundary maps into the line $\varphi = t_{inj}+a_\zeta(0)x$, since $\zeta = 0$ on it, and as $\mathcal U$ approaches $\mathcal U_J(0)$, the values of $\psi(\zeta; \zeta_0)$ must go to infinity, since $\psi_\Phi(\zeta) \to \infty$ as $\zeta\to 0$ by the definition \eqref{eq:psi_Phi_definition}.
In summary, we demonstrated that the boundaries of $\overline{\Upsilon}_J$ map back into the boundaries of $\angle_{t_{inj}}$, and since $\psi$ is continuous and monotone with respect to the parameters, the interior will also map one-to-one. Thus, the characteristics in the $(\varphi, x)$ plane cover the whole area $\angle_{t_{inj}}$.

What's left is to prove that $\mathcal U$, when mapped back into the $(\varphi, x)$ plane, gives us a piecewise continuously differentiable function. We can prove this separately for the families $\mathcal U(\zeta; \mathcal U_0)$ and $\mathcal U(\zeta; \zeta_0)$. The following proof is for the $\mathcal U(\zeta; \zeta_0)$ family, the proof for the other family is the same.

Denote by
\[
L(\zeta; \zeta_0) = \left(t_{inj} + \frac{e^{\psi(\zeta; \zeta_0)}a_\zeta(\zeta)}{\sqrt{a_\zeta^2(\zeta) + 1}}, \frac{e^{\psi(\zeta; \zeta_0)}}{\sqrt{a_\zeta^2(\zeta) + 1}}\right).
\]
Due to Proposition~\ref{prop:continuous_differential_by_parameter} this is a $\mathcal C^1$ mapping. Using this mapping we can define the inverse mapping of $\mathcal U(\zeta; \zeta_0)$ back into the $(\varphi, x)$ plane as follows:
\[
\mathcal U(\varphi, x) := \mathcal U(L^{-1}(\varphi, x)).
\]
For the inverse mapping $L^{-1}$ to exist and be continuously differential, we need its Jacobian determinant to be non-zero. Denote
\[
G(\zeta, \zeta_0) = \frac{e^{\psi(\zeta; \zeta_0)}}{\sqrt{a_\zeta^2(\zeta) + 1}}, \qquad
L(\zeta, \zeta_0) = \Big(t_{inj} + a_\zeta(\zeta)G(\zeta, \zeta_0), G(\zeta, \zeta_0)\Big).
\]
Then the Jacobian determinant is calculated as follows:
\begin{align*}
\det J_L &= (a_{\zeta\zeta}(\zeta) G(\zeta, \zeta_0) + a_{\zeta} G_\zeta(\zeta, \zeta_0)) G_{\zeta_0}(\zeta, \zeta_0) - a_\zeta(\zeta) G_{\zeta_0}(\zeta, \zeta_0) G_\zeta(\zeta, \zeta_0) \\
{} &= a_{\zeta\zeta}(\zeta) G(\zeta, \zeta_0) G_{\zeta_0}(\zeta, \zeta_0) = a_{\zeta\zeta}(\zeta) G^2(\zeta, \zeta_0) \psi_{\zeta_0}(\zeta; \zeta_0) > 0
\end{align*}
for $\zeta<\zeta_0$ due to Lemma~\ref{lemma:dpsi_dzeta0}.
\end{proof}

\subsubsection{Partial Jouguet case with one change of sign}

In this subsection we consider the simple case where the left-hand side in the condition $(\mathcal F 5)$ changes sign exactly once. In this case part of the chemical shock front does not need the Jouguet condition. 

Denote $B = (\varphi_B = \Phi(x_B), x_B)$ the unique point such that ($\mathcal F$5) changes sign at $\zeta_B = \zeta(\varphi_B, x_B)$. It is clear that at this point ($\mathcal F$5) achieves equality. But we can also tell its signs to the right and to the left with the following Lemma.

\begin{lemma}
The condition $(\mathcal F 5)$ always holds for sufficiently small $\zeta$.
\end{lemma}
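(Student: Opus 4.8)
The plan is to study the left-hand side of $(\mathcal F5)$ as $\zeta\to 0^+$ and to show it tends to a strictly negative limit, so that it is negative on a whole right-neighbourhood of $0$. Evaluating $(\mathcal F5)$ at the Jouguet point means taking $\mathcal U=\mathcal U_J(\zeta)$ with $\mathcal F_{\mathcal U}(\mathcal U_J(\zeta),\zeta)=a(\zeta)/\zeta$, so first I would record that this point behaves well near $\zeta=0$. Since $a(0)=0$, we have $a(\zeta)/\zeta\to a_\zeta(0)>0$ by (A2), and because $\mathcal F_{\mathcal U}(1,\zeta)=+\infty$ decreasing to $0$ by $(\mathcal F2)$, there is a unique $\mathcal U_J(\zeta)\in(1,\mathcal U^{\max}(\zeta))$ solving the Jouguet equation, with $\mathcal U_J(\zeta)\to\mathcal U_J(0)\in(1,\mathcal U^{\max}(0))$ continuously. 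In particular the whole analysis stays in the interior of the smooth region $(1,+\infty)\times[0,1]$.

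The core of the argument is the asymptotics of $b(\zeta)=a(\zeta)/\zeta-a_\zeta(\zeta)$. Using $a\in\mathcal C^2$, $a(0)=0$ and Taylor's theorem I would expand
\[
\frac{a(\zeta)}{\zeta}=a_\zeta(0)+\tfrac12 a_{\zeta\zeta}(0)\zeta+o(\zeta),\qquad a_\zeta(\zeta)=a_\zeta(0)+a_{\zeta\zeta}(0)\zeta+o(\zeta),
\]
so that $b(\zeta)=-\tfrac12 a_{\zeta\zeta}(0)\zeta+o(\zeta)$. By (A3) this yields both $b(\zeta)\to 0$ and $b(\zeta)/\zeta\to-\tfrac12 a_{\zeta\zeta}(0)>0$, a finite positive limit. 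Hence the factor $\mathcal F_{\mathcal U\zeta}+b(\zeta)/\zeta$ stays bounded (each of its two pieces converges), while it is multiplied by $b(\zeta)\to 0$; therefore the second summand of $(\mathcal F5)$, namely $\big(\mathcal F_{\mathcal U\zeta}+b(\zeta)/\zeta\big)b(\zeta)$, tends to $0$.

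It then remains to handle the first summand $-\mathcal F_{\mathcal U\mathcal U}\mathcal F_{\zeta}$ evaluated at $(\mathcal U_J(\zeta),\zeta)$. At the Jouguet point $\mathcal F_{\mathcal U}=a(\zeta)/\zeta>0$, so by $(\mathcal F3)$ we have $\mathcal F_{\mathcal U\mathcal U}<0$, and by $(\mathcal F4)$ we have $\mathcal F_\zeta<0$; both inequalities are strict and, by continuity, stay away from $0$ near $\zeta=0$. Consequently $-\mathcal F_{\mathcal U\mathcal U}\mathcal F_\zeta$ converges to the strictly negative number $-\mathcal F_{\mathcal U\mathcal U}(\mathcal U_J(0),0)\,\mathcal F_\zeta(\mathcal U_J(0),0)$. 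Adding the vanishing second term, the left-hand side of $(\mathcal F5)$ converges to this strictly negative value, hence is negative for all sufficiently small $\zeta$, which is the claim.

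The only delicate point is ensuring the second summand really disappears in the limit: a priori $b(\zeta)/\zeta$ could blow up and spoil the cancellation. The main obstacle is thus the bookkeeping in the Taylor expansion of $b$, where the leading $a_\zeta(0)$ terms cancel and one must track the next order carefully; this is precisely where $a\in\mathcal C^2$ together with $a(0)=0$ enters, and where (A3) secures the strict sign of the limiting value. Everything else is a direct application of the already-established properties $(\mathcal F2)$--$(\mathcal F4)$ and (A2)--(A3).
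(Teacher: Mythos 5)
Your proof is correct and takes essentially the same approach as the paper's: evaluate the left-hand side of $(\mathcal F5)$ along the Jouguet curve $\mathcal U=\mathcal U_J(\zeta)$ and show that as $\zeta\to 0$ it converges to $-\mathcal F_{\mathcal U\mathcal U}(\mathcal U_J(0),0)\,\mathcal F_{\zeta}(\mathcal U_J(0),0)<0$. If anything, yours is the more careful version: the paper's one-line argument invokes only $b(\zeta)\to 0$, whereas the term $\bigl(b(\zeta)/\zeta\bigr)\,b(\zeta)=b(\zeta)^2/\zeta$ genuinely needs your Taylor-expansion observation that $b(\zeta)/\zeta\to-\tfrac12 a_{\zeta\zeta}(0)$ stays finite.
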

\begin{proof}
Recall that in $(\mathcal F 5)$ we have $\mathcal U = \mathcal U_J(\zeta)$ defined by the Jouguet condition~\eqref{eq:Joguet_in_zeta_U}, thus it is separated from $\mathcal U = 1$. As $\zeta \to 0$, we have $b(\zeta) \to 0$, 
therefore,
\[
-\mathcal F_{\mathcal U \mathcal U}(\mathcal U, \zeta)\mathcal F_{\zeta}(\mathcal U, \zeta) + \left(\mathcal F_{\mathcal U \zeta}(\mathcal U, \zeta) + \dfrac{b(\zeta)}{\zeta}\right) b(\zeta) \to -\mathcal F_{\mathcal U \mathcal U}(\mathcal U_J(0), 0)\mathcal F_{\zeta}(\mathcal U_J(0), 0) < 0,
\]
and the condition $(\mathcal F 5)$ must hold for sufficiently small $\zeta$.
\end{proof}

From this lemma we conclude that $(\mathcal F 5)$ holds for $\zeta < \zeta_B$ (and thus for points $(\Phi(x), x)$ with $x>x_B$) and breaks for $\zeta > \zeta_B$ (for points $(\Phi(x), x)$ with $x<x_B$).

We expect that after $B$ the Jouguet condition becomes necessary, but before it the values of $\mathcal U$ need to come from characteristics continued from $TA$. Let us see what part of $\angle_{t_{inj}}$ these characteristics actually cover.

\begin{lemma}
\label{lemma_in_4_parts}
The following properties hold for the $\mathcal U$-characteristics from $TA$:
\begin{enumerate}
    \item For sufficiently small $x$ the characteristic from the point $(\varphi, x) \in TA$ does not intersect the chemical shock front $\Phi$.
    \item The characteristics in the vicinity of $x_A$ intersect the chemical shock front $\Phi$.
    \item There exists a unique point $C = (\varphi_C, x_C)\in TA$, such that the characteristic from $C$ touches the chemical shock front $\Phi$.
    \item The characteristic from $C$ touches the chemical shock front $\Phi$ exactly at the point $B$.
\end{enumerate}
\end{lemma}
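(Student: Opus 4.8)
The plan is to push the whole question into the $(\zeta,\mathcal U)$ plane through the mapping \eqref{eq:mapping_into_zeta_U}: every characteristic issuing from $TA$ becomes the integral curve $\mathcal U(\zeta;\mathcal U_0)$ of \eqref{eq:U-char-of-c} starting at $(\zeta,\mathcal U)=(1,\mathcal U_0)$, $\mathcal U_0\in[1,\mathcal U^+_{OA}]$, and its contact with the shock front is governed by the Jouguet graph $\Upsilon_J$ (defined by \eqref{eq:Joguet_in_zeta_U}) together with the radial profiles $\psi(\zeta;\mathcal U_0)$ and $\psi_\Phi(\zeta)$ of \eqref{eq:definition_of_psi_TA}, \eqref{eq:psi_Phi_definition}. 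The organizing principle, which I would establish first, is that a $TA$-characteristic \emph{meets} $\Phi$ exactly when its integral curve attains the shock value, i.e.\ reaches $\Upsilon_J$, and meets it \emph{tangentially} (the slope condition \eqref{eq:Joguet_cond} holding there) exactly when the integral curve is tangent to $\Upsilon_J$; all four assertions are then statements about the one–parameter family $\{\mathcal U(\cdot;\mathcal U_0)\}$ relative to $\Upsilon_J$, which at the very end are transported back to the $(\varphi,x)$ plane using the monotonicity of $\psi$ in the parameters (Lemmas~\ref{lemma:dpsi_dU0}, \ref{lemma:dpsi_dzeta0}), exactly as in the proof of Theorem~\ref{thm1}.

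For Parts (1)–(2) I would use the two extreme members of the family. At $\mathcal U_0=\mathcal U^+_{OA}=\mathcal U_J(1)$ the integral curve starts on $\Upsilon_J$ at the image of $A$; since $(\mathcal F5)$ fails at $\zeta=1$, the contrapositive of Lemma~\ref{lem:Joguet_char_above_shock} tells us the characteristic leaves $A$ on the shock side, so by continuity in $\mathcal U_0$ (Proposition~\ref{prop:continuous_differential_by_parameter}) the integral curves with $\mathcal U_0$ slightly below $\mathcal U^+_{OA}$ still reach $\Upsilon_J$ and the corresponding characteristics cross $\Phi$ — this is Part (2). At the other end, as $\mathcal U_0\to1$ the characteristic is nearly vertical ($\mathcal F_{\mathcal U}\to+\infty$ by $(\mathcal F2)$), and by Lemma~\ref{lemma_U_derivative_by_U_0} its integral curve lies strictly under $\Upsilon_J$ for every $\zeta$; never attaining the shock value $\mathcal U_J(\zeta)$, the characteristic runs up to the upper edge $\zeta=0$ of the cone without meeting $\Phi$ — this is Part (1).

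For Part (3) I would realize the crossing/no–crossing transition as a single threshold. By Lemma~\ref{lemma_U_derivative_by_U_0} the curves $\mathcal U(\cdot;\mathcal U_0)$ are strictly ordered, and by Picard–Lindel\"of pairwise disjoint, so the set of $\mathcal U_0$ for which $\mathcal U(\cdot;\mathcal U_0)$ reaches $\Upsilon_J$ is an interval $(\mathcal U_C,\mathcal U^+_{OA}]$ with a well–defined left endpoint $\mathcal U_C$; continuity of the family then forces the threshold curve $\mathcal U(\cdot;\mathcal U_C)$ to be tangent to $\Upsilon_J$ rather than to cross it, and its starting point on $TA$ is the required $C$. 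Uniqueness is immediate from the strict ordering.

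Part (4) is the crux and I would settle it with the second–order test for tangency to $\Upsilon_J$. Comparing \eqref{eq:U-char-of-c} with \eqref{eq:d-dzeta-U-Phi} along $\Upsilon_J$, the computation preceding Proposition~\ref{prop:full_cover_of_zeta_U_plane} shows that
\[
\big(\mathcal F_{\mathcal U}(\mathcal U_J,\zeta)-a_\zeta(\zeta)\big)\,\mathcal F_{\mathcal U\mathcal U}(\mathcal U_J,\zeta)\Big(\tfrac{d}{d\zeta}\mathcal U(\zeta;\mathcal U_0)-\mathcal U_J'(\zeta)\Big)
\]
equals the left–hand side of $(\mathcal F5)$; since the prefactor is nonzero (it is $b(\zeta)\,\mathcal F_{\mathcal U\mathcal U}$ with $b>0$, $\mathcal F_{\mathcal U\mathcal U}<0$ by Corollary~\ref{corollary:F-deriv-est-in-cone}), an integral curve can be tangent to $\Upsilon_J$ at a value $\zeta_0$ only where the left–hand side of $(\mathcal F5)$ vanishes, and by the lemma immediately preceding this statement that occurs only at $\zeta_0=\zeta_B$. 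The same sign relation shows the tangency is one–sided from below, so $\mathcal U(\cdot;\mathcal U_C)$ is forced to coincide with the Jouguet curve $\mathcal U(\cdot;\zeta_B)$ issuing from $B$; reconstructing $\psi$ along it from \eqref{eq:d_psi_d_zeta} with initial value $\psi_\Phi(\zeta_B)$ (using Lemma~\ref{lemma_dU_dzeta0} and Lemma~\ref{lemma:dpsi_dzeta0}) then places the contact point at $(\Phi(x_B),x_B)=B$. The main obstacle is precisely excluding an interior tangency at some $\zeta_0<\zeta_B$: a cone–side tangency there would, by the crossing directions dictated by the sign of $(\mathcal F5)$, require the integral curve to rise above $\Upsilon_J$ on the $\zeta>\zeta_0$ side, which is impossible for a curve that starts strictly below $\Upsilon_J$ at $\zeta=1$; the only admissible tangency is thus the degenerate one at $\zeta_B$, and this is where the interplay between the global $\psi$–profile and the pointwise sign of $(\mathcal F5)$ must be handled with care.
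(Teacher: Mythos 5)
Your plan rests on an equivalence that is false as stated: that a $TA$-characteristic meets $\Phi$ exactly when its image in the $(\zeta,\mathcal U)$ plane reaches $\Upsilon_J$. Only one direction holds (this is the content of Lemma~\ref{lemma-U-Phi-above-U-J}: at an intersection the characteristic slope cannot exceed the shock slope, hence $\mathcal U\geqslant\mathcal U_J(\zeta)$ there). The converse fails because the $(\zeta,\mathcal U)$ plane forgets position: $\Upsilon_J$ is a locus of \emph{slope} matching, and a characteristic can have slope $a(\zeta)/\zeta$ while sitting far from the point of $\Phi$ that carries that value of $\zeta$. Concretely, any two characteristics with the same $(\zeta,\mathcal U)$ trace obey the same equation \eqref{eq:d_psi_d_zeta} for $\psi$, hence differ by a constant shift of $\psi$ (a homothety about $T$); whether either of them touches $\Phi$ depends on that constant, which your plane cannot see. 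Whether a curve that has climbed above $\Upsilon_J$ actually closes the positive gap $\psi_\Phi(\zeta)-\psi(\zeta)$ before leaving the cone at $\zeta=0$ is a quantitative question about \eqref{eq:d_psi_d_zeta}, not a qualitative one about $\Upsilon_J$. This infects Parts (2)--(4): in Part (2), ``the integral curves reach $\Upsilon_J$ and the corresponding characteristics cross $\Phi$'' is a non sequitur; in Part (3), your threshold $\mathcal U_C$ is the threshold for the wrong event (reaching $\Upsilon_J$, not touching $\Phi$), so existence and uniqueness are proved for an a priori different point $C$; and in Part (4), coincidence of the threshold trace with the Jouguet trajectory $\mathcal U(\cdot;\zeta_B)$ identifies traces, not characteristics --- you still need $\psi_{TA}(\mathcal U_C)=\psi(1;\zeta_B)$, i.e.\ exactly the statement that the backward Jouguet characteristic from $B$ lands on $TA$ at $C$, which is what Part (4) asserts, so the argument is circular. (Additionally, in Part (1), Lemma~\ref{lemma_U_derivative_by_U_0} gives only the ordering in $\mathcal U_0$; it does not show that curves with $\mathcal U_0$ near $1$ stay under $\Upsilon_J$ --- that requires a separate estimate near the non-Lipschitz boundary $\mathcal U=1$.)

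The paper avoids all of this by keeping every incidence question in the $(\varphi,x)$ plane and using the $(\zeta,\mathcal U)$ plane only for second-derivative sign bookkeeping. Part (1): as $\mathcal U_0\to1$, $\psi(\cdot\,;\mathcal U_0)\to-\infty$ uniformly (since $\psi_{TA}(\mathcal U_0)\to-\infty$ and the integrand in \eqref{eq:d_psi_d_zeta} is bounded), while $\psi_\Phi\geqslant\psi_\Phi(1)>-\infty$, so near-$T$ characteristics are radially too short to ever reach $\Phi$. Part (2): the characteristic through $A$ is tangent to $\Phi$ there by \eqref{eq:dPhi_dx_at_A}, and since ($\mathcal F$5) fails strictly at $\zeta=1$, the reversed-sign version of the computation in Lemma~\ref{lem:Joguet_char_above_shock} forces it below $\Phi$; continuous dependence on the starting point then forces nearby characteristics, which begin on $TA$ above $\Phi$, to cross it. Part (3): continuity between these two regimes, with $C$ taken as the supremum of starting points of crossing characteristics. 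Part (4): a two-sided squeeze at the touching point $\zeta_0$ of the $C$-characteristic --- tangency to $\Phi$ itself yields the non-strict ($\mathcal F$5) inequality, hence $\zeta_0\leqslant\zeta_B$, while the crossing characteristics accumulating at $C$ force their traces to meet $\Upsilon_J$ in a way that makes ($\mathcal F$5) break in every right-neighborhood of $\zeta_0$, hence $\zeta_0\geqslant\zeta_B$. Your second-order test (tangency to $\Upsilon_J$ only where the left-hand side of ($\mathcal F$5) vanishes) is a correct ingredient and matches the paper's computation, but to make your $(\zeta,\mathcal U)$-centric scheme work you would first have to prove the missing implication from ``reaches $\Upsilon_J$'' to ``meets $\Phi$'', and any such proof necessarily runs through the $\psi$-profiles --- which is precisely the route the paper takes.
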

\begin{proof}
1. If we move to the $(\zeta, \mathcal U)$ plane, the curve $\Phi$ corresponds to $\psi_\Phi(\zeta) \geqslant \psi_\Phi(1) > -\infty$. But as $\mathcal U_0 \to 1$, the solution 
\[
\psi(\zeta; \mathcal U_0) = \psi_{TA}(\mathcal U_0) + \int\limits_{1}^\zeta \frac{a_{\zeta\zeta}(s)a_\zeta(s)}{1 + a_\zeta^2(s)} + \frac{a_{\zeta\zeta}(s)}{\mathcal F_{\mathcal U}(\mathcal U(s; \mathcal U_0), s) - a_\zeta(s)} \, ds
\]
must uniformly tend to $-\infty$, since $\psi_{TA}(\mathcal U_0) \to -\infty$ and the integrand is bounded. Therefore, there exists a sufficiently small $\mathcal U_0$ for which $\psi(\zeta; \mathcal U_0) < \psi_\Phi(1) \leqslant\psi_\Phi(\zeta)$. And geometrically this meant the corresponding characteristic never intersects the chemical shock front $\Phi$. 

2. At $A$ the condition $(\mathcal F 5)$ breaks and has the strict opposite sign. Suppose there are characteristics from $TA$ in any vicinity of $A$
that do not intersect $\Phi$. The characteristics depend continuously on the initial data, so as their limit, the characteristic from $A$ must touch $\Phi$ from above, which gives us
\[
\left.\dfrac{d}{d\zeta}(\varphi_A'(x) - \Phi'(x))\right|_{x=x_A} \leqslant 0,
\]
which, through analysis similar to Lemma~\ref{lem:Joguet_char_above_shock}, contradicts the strict opposite sign in~$(\mathcal F 5)$. 

3. Since the characteristics depend continuously on the initial data, between the vicinity of $T$, where characteristics do not intersect $\Phi$ and the vicinity of $A$, where characteristics intersect $\Phi$, there must exist a characteristic that touches $\Phi$. We denote by $C = (\varphi_C(x_C), x_C)\in TA$ the beginning point of such characteristic. Without loss of generality, we assume that it is the supremum of all characteristics that intersect $\Phi$, and thus has characteristics intersecting $\Phi$ in any vicinity to the right of $C$. 

4. Let's map the characteristic $\varphi_C(x)$ and some of its neighbors into the plane $(\zeta, \mathcal U)$. 
At the point $x_0$, where the characteristic from $C$ touches the shock front $\Phi$, similarly to part 2 of this lemma we have
\[
\varphi_C(x_0) = \Phi(x_0), \qquad \varphi_C'(x_0) = \Phi'(x_0), \qquad \left.\dfrac{d}{d\zeta}(\varphi_C'(x) - \Phi'(x))\right|_{x=x_0} \leqslant 0,
\]
therefore $(\mathcal F 5)$ either holds or achieves equality at this point, thus $x_0 \geqslant x_B$ and $\zeta_0 = \zeta(\Phi(x_0), x_0) \leqslant \zeta_B$. At the same time, there are characteristics $\varphi_{C+\varepsilon}(x)$ intersecting $\Phi$ in any vicinity to the right of the characteristic $\varphi_C(x)$. For such characteristics at the point of intersection $x_\varepsilon$ we have
\[
\mathcal F_{\mathcal U}(\mathcal U(\Phi(x_\varepsilon), x_\varepsilon), \zeta(\Phi(x_\varepsilon), x_\varepsilon)) = \varphi_{C+\varepsilon}'(x_\varepsilon) \leqslant \Phi'(x_\varepsilon) = \dfrac{a(\zeta(\Phi(x_\varepsilon), x_\varepsilon))}{\zeta(\Phi(x_\varepsilon), x_\varepsilon)},
\]
thus for $\zeta_\varepsilon = \zeta(\Phi(x_\varepsilon), x_\varepsilon)$ we obtain 
\[
\mathcal U_{C+\varepsilon}(\zeta_\varepsilon) \geqslant \mathcal U_J(\zeta_\varepsilon),
\]
where $\mathcal U_{C+\varepsilon}$ corresponds to $\varphi_{C+\varepsilon}$ in the $(\zeta, \mathcal U)$ plane. Therefore $\mathcal U_{C+\varepsilon}$ must intersect $\mathcal U_J$. At the point of intersection $(\mathcal F 5)$ breaks, and as $\varepsilon\to 0$, we see that $(\mathcal F 5)$ breaks in any right vicinity of $\zeta_0$. Therefore $\zeta_0 \geqslant \zeta_B$. Thus, $\zeta_0 = \zeta_B$ and $x_0 = x_B$.
\end{proof}

Now we have 3 families of characteristics: 
\begin{itemize}
    \item Characteristics starting on $TC$ and going up to the upper boundary of $\angle_{t_{inj}}$. 
    \item Characteristics starting on $CA$ and intersecting the chemical shock front $\Phi$ between $A$ and $B$.
    \item Characteristics starting from every point $(\Phi(x), x)$ for $x \geqslant x_B$ constructed via the Jouguet condition. 
\end{itemize}

The second family brings some values of $\mathcal U$ to the shock front $\Phi$ between $A$ and $B$. When we move to the $(\zeta, \mathcal U)$ plane, this values map to $\mathcal U_\Phi(\zeta) = \mathcal U(\Phi(x), x)$ for~$\zeta \in [\zeta_B, 1]$.

\begin{lemma}
\label{lemma-U-Phi-above-U-J}
$\mathcal U_\Phi(\zeta) > \mathcal U_J(\zeta)$ for $\zeta > \zeta_B$ and $\mathcal U_\Phi(\zeta_B) = \mathcal U_J(\zeta_B)$.
\end{lemma}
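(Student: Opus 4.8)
The plan is to prove the two assertions separately: the boundary equality at $\zeta_B$ is a consequence of tangency, while the interior strict inequality follows once tangential crossings of the front are excluded.

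First I would establish $\mathcal U_\Phi(\zeta_B)=\mathcal U_J(\zeta_B)$. By Lemma~\ref{lemma_in_4_parts}(4) the characteristic emanating from $C$ touches $\Phi$ exactly at $B$, so at $x_B$ the slope of that characteristic agrees with the slope of the front. Combining \eqref{eq:U-char-phi} and \eqref{eq:Phi_equation}, this reads $\mathcal F_{\mathcal U}(\mathcal U(\Phi(x_B),x_B),\zeta_B)=\Phi'(x_B)=a(\zeta_B)/\zeta_B$, which is precisely the defining relation \eqref{eq:Joguet_in_zeta_U} of $\mathcal U_J$ at $\zeta_B$; hence $\mathcal U_\Phi(\zeta_B)=\mathcal U_J(\zeta_B)$.

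For $\zeta\in(\zeta_B,1)$ I would first recall that the non-strict bound $\mathcal U_\Phi(\zeta)\ge\mathcal U_J(\zeta)$ is already contained in the proof of Lemma~\ref{lemma_in_4_parts}(4): a characteristic of the $CA$-family reaches $(\Phi(x),x)$ from above the front, so at the crossing its slope cannot exceed $\Phi'$, giving $\mathcal F_{\mathcal U}(\mathcal U_\Phi(\zeta),\zeta)\le a(\zeta)/\zeta=\mathcal F_{\mathcal U}(\mathcal U_J(\zeta),\zeta)$, and the monotonicity $\mathcal F_{\mathcal U\mathcal U}<0$ in the cone (Corollary~\ref{corollary:F-deriv-est-in-cone}) then yields $\mathcal U_\Phi(\zeta)\ge\mathcal U_J(\zeta)$. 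The real task is to upgrade this to a strict inequality, i.e.\ to show the crossing is transverse. I would argue by contradiction: if $\mathcal U_\Phi(\zeta^*)=\mathcal U_J(\zeta^*)$ for some $\zeta^*\in(\zeta_B,1)$, then at the crossing point $x^*$ (with $\zeta(\Phi(x^*),x^*)=\zeta^*$) one has $\varphi'(x^*)=\Phi'(x^*)$, i.e.\ the Jouguet relation holds there. Reproducing the second-derivative comparison of Lemma~\ref{lem:Joguet_char_above_shock} at this contact point, $\tfrac{d}{d\zeta}(\varphi_x-\Phi_x)$ equals $\tfrac{1}{b(\zeta^*)}$ times the left-hand side of ($\mathcal F$5); since $\zeta^*>\zeta_B$ the condition ($\mathcal F$5) fails with the strict opposite sign, so this derivative is positive. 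As $\zeta$ decreases along the characteristic, $\varphi-\Phi$ then has a local maximum equal to $0$ at $x^*$, forcing the characteristic below $\Phi$ on both sides of $x^*$ and contradicting that it approaches the front from above. Hence the crossing is transverse, $\mathcal F_{\mathcal U}(\mathcal U_\Phi(\zeta),\zeta)<a(\zeta)/\zeta$, and strict monotonicity of $\mathcal F_{\mathcal U}(\cdot,\zeta)$ (via ($\mathcal F$3)) gives $\mathcal U_\Phi(\zeta)>\mathcal U_J(\zeta)$.

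The main obstacle I expect is this transversality step: one must exclude precisely the degenerate tangential crossing, and the only clean mechanism available is the sign of ($\mathcal F$5), which is why the contact-point computation of Lemma~\ref{lem:Joguet_char_above_shock} has to be redone with the opposite sign. Some care is also needed at the endpoint $\zeta=1$, where the continuation of $OA$ is tangent to $\Phi$ at $A$ and the inequality degenerates to equality $\mathcal U_\Phi(1)=\mathcal U_J(1)=\mathcal U^+_{OA}$; thus the strict statement is genuinely interior, holding on the open interval $(\zeta_B,1)$.
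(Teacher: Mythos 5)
Your proof is correct, and its core mechanism is the same as the paper's: the non-strict bound $\mathcal U_\Phi(\zeta)\geqslant\mathcal U_J(\zeta)$ comes from the slope comparison $\varphi'(x_0)\leqslant\Phi'(x_0)$ at a crossing point together with monotonicity of $\mathcal F_{\mathcal U}$ on the branch where $\mathcal F_{\mathcal U}>0$, and equality is characterized by tangency. The paper's own proof essentially stops there: it states that equality is reached if and only if the characteristic touches the front, and leaves implicit that, by parts 3 and 4 of Lemma~\ref{lemma_in_4_parts}, touching occurs only at $B$, i.e.\ only at $\zeta_B$. What you do differently is to make this last step explicit and self-contained: you exclude tangency at any $\zeta^*\in(\zeta_B,1)$ by redoing the contact computation of Lemma~\ref{lem:Joguet_char_above_shock} with the reversed sign of ($\mathcal F$5). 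Your key identity is exactly right: at a point where the Jouguet relation holds one has $\mathcal F_{\mathcal U}-a_\zeta=b(\zeta)$, so $\tfrac{d}{d\zeta}(\varphi_x-\Phi_x)$ equals $b(\zeta^*)^{-1}$ times the left-hand side of ($\mathcal F$5); since $b>0$ and $\zeta$ decreases with $x$ along characteristics in the cone, the reversed sign forces $\varphi-\Phi$ to have a strict local maximum equal to zero at the contact, putting the characteristic below $\Phi$ on both sides and contradicting its arrival from inside the cone. This is the same argument the paper deploys in parts 2 and 4 of Lemma~\ref{lemma_in_4_parts}, so your inlining of it is legitimate; it has the advantage of not leaning on the somewhat informal uniqueness claim in part 3 (the ``supremum of all characteristics that intersect $\Phi$''). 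Finally, your endpoint remark is a genuine, if minor, refinement of the statement: since $\mathcal U_J(1)=\mathcal U^+_{OA}$ and the continuation of $OA$ is tangent to $\Phi$ at $A$ by \eqref{eq:dPhi_dx_at_A}, one has $\mathcal U_\Phi(1)=\mathcal U_J(1)$, so the strict inequality should indeed be read on the open interval $(\zeta_B,1)$ rather than for all $\zeta>\zeta_B$.
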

\begin{proof}
Similar to part 4 in Lemma~\ref{lemma_in_4_parts}, at the point of intersection $x_0$ between a characteristic $\varphi(x)$ and $\Phi(x)$ we have
\[
\mathcal F_{\mathcal U}(\mathcal U(\Phi(x_0), x_0), \zeta(\Phi(x_0), x_0)) = \varphi'(x_0) \leqslant \Phi'(x_0) = \dfrac{a(\zeta(\Phi(x_0), x_0))}{\zeta(\Phi(x_0), x_0)},
\]
therefore by definition $\mathcal U_\Phi(\zeta) \geqslant \mathcal U_J(\zeta)$, and equality is reached if and only if the characteristic touches the front.
\end{proof}

\begin{proposition}
Let the condition ($\mathcal F$5) hold true only for $\zeta < \zeta_B = \zeta(\Phi(x_B), x_B)$. Then the images of $\mathcal U$-characteristics in $\angle_{t_{inj}}$ (continued from TA and constructed via the Jouguet condition \eqref{eq:Joguet_cond} from each point of the chemical shock front $(\Phi(x),x)$ with $x > x_B$) when mapped onto the $(\zeta, \mathcal U)$ plane fill the whole area
$$\overline {\Upsilon}_\Phi = \{(\zeta, \mathcal U): \zeta\in[0,\zeta_B], \mathcal U \in [1, \mathcal U_J(\zeta)] \text{ or } \zeta\in[\zeta_B, 1], \mathcal U \in [1, \mathcal U_\Phi(\zeta)]\}.$$
\end{proposition}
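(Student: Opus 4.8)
The plan is to argue entirely in the $(\zeta,\mathcal U)$ plane, exactly as in the full Jouguet case of Proposition~\ref{prop:full_cover_of_zeta_U_plane}, and to show that the three listed families, once mapped by \eqref{eq:mapping_into_zeta_U}, foliate $\overline{\Upsilon}_\Phi$ with no gaps. The backbone is again Picard--Lindel\"of: the right-hand side of \eqref{eq:U-char-of-c} is locally Lipschitz on $\overline{\Upsilon}_\Phi\setminus\{\mathcal U=1\}$, and by the argument reproduced in Proposition~\ref{prop:full_cover_of_zeta_U_plane} no nontrivial integral curve can touch the line $\mathcal U=1$; hence through every interior point of $\overline{\Upsilon}_\Phi$ there passes a unique increasing integral curve of \eqref{eq:U-char-of-c}. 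It therefore suffices to check that, followed toward increasing $\zeta$, each such curve leaves $\overline{\Upsilon}_\Phi$ through exactly one of the three boundary arcs hosting the starting points of our families --- the right edge $\Upsilon_{TA}$, the graph of $\mathcal U_\Phi$ over $[\zeta_B,1]$, or the arc $\Upsilon_J$ over $[0,\zeta_B]$ --- and that as the family parameter runs over its range the exit points sweep the corresponding arc continuously and monotonically.

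First I would pin down the separating curve. By Lemma~\ref{lemma_in_4_parts}(4) the characteristic from $C$ touches $\Phi$ exactly at $B$, so in the $(\zeta,\mathcal U)$ plane it is an integral curve of \eqref{eq:U-char-of-c} through $B=(\zeta_B,\mathcal U_J(\zeta_B))$; by uniqueness it coincides with the Jouguet curve $\mathcal U(\cdot;\zeta_B)$. Since ($\mathcal{F}$5) holds with equality at $\zeta_B$ and strictly for $\zeta<\zeta_B$, comparing \eqref{eq:U-char-of-c} with \eqref{eq:d-dzeta-U-Phi} shows this curve is tangent to $\Upsilon_J$ at $B$ and stays strictly below $\Upsilon_J$ nearby. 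Denoting by $\mathcal U_C$ the $TA$-parameter of $C$, this $C$-curve partitions $\overline{\Upsilon}_\Phi$: below it lie the family-1 curves $\mathcal U(\cdot;\mathcal U_0)$, $\mathcal U_0\in[1,\mathcal U_C]$, which by Lemma~\ref{lemma_in_4_parts}(1) never meet $\Phi$ and run to $\zeta=0$; above it lie the family-2 and family-3 curves.

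Next I would cover the two remaining regions. On the strip $\{0\le\zeta\le\zeta_B\}$ the condition ($\mathcal{F}$5) holds, so the reasoning of Proposition~\ref{prop:full_cover_of_zeta_U_plane} applies verbatim: the Jouguet curves $\mathcal U(\cdot;\zeta_0)$, $\zeta_0\in[0,\zeta_B]$, meet $\Upsilon_J$ from below and, restricted to $\zeta\le\zeta_0$, fill the portion of $\overline{\Upsilon}_\Phi$ between the $C$-curve and $\Upsilon_J$; monotonicity in $\zeta_0$ (Lemma~\ref{lemma_dU_dzeta0}) together with continuous dependence (Proposition~\ref{prop:continuous_differential_by_parameter}) rules out gaps. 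For $\zeta\in[\zeta_B,1]$ above the $C$-curve I would use the family-2 curves $\mathcal U(\cdot;\mathcal U_0)$, $\mathcal U_0\in[\mathcal U_C,\mathcal U^+_{OA}]$: by Lemma~\ref{lemma-U-Phi-above-U-J} each crosses the graph of $\mathcal U_\Phi$ transversally (tangentially only at $B$) at a single $\zeta\in[\zeta_B,1]$, and the portion of the curve with larger $\zeta$, up to $\Upsilon_{TA}$, lies inside $\overline{\Upsilon}_\Phi$. Lemma~\ref{lemma_U_derivative_by_U_0} makes $\mathcal U_0$ determine this crossing monotonically, so by continuity the crossings cover all of $[\zeta_B,1]$ and these curves fill the region between the $C$-curve and the graph of $\mathcal U_\Phi$.

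The main obstacle is the gluing at $B$ and along the $C$-curve: one must verify that the three families meet there with neither an uncovered sliver nor an overlap. Concretely the delicate points are (i) that $\mathcal U_\Phi$ joins $\mathcal U_J$ continuously, $\mathcal U_\Phi(\zeta_B)=\mathcal U_J(\zeta_B)$ (Lemma~\ref{lemma-U-Phi-above-U-J}), so the two upper arcs form one continuous boundary graph; (ii) that the parameter value $\mathcal U_0=\mathcal U_C$ bounding the $TA$-family is exactly the $B$-Jouguet curve identified in the second step, so families~1 and~2 split precisely along it; and (iii) that no family-2 curve reaches the graph of $\mathcal U_\Phi$ before $\zeta_B$ and no Jouguet curve is needed beyond $\zeta_B$, which is the content of the single sign change of ($\mathcal{F}$5) at $B$. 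Once these matching conditions are in place, the Picard--Lindel\"of foliation together with the monotone, continuous dependence on the parameters leaves no point of $\overline{\Upsilon}_\Phi$ uncovered, proving the claim.
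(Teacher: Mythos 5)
Your proposal is correct and takes essentially the same route as the paper's proof: the same decomposition into three families of trajectories separated by the curve through $C$ (identified with the Jouguet trajectory touching the upper boundary at $\zeta_B$), with the region below it filled by the $TC$-family running from $\zeta=0$ to $\zeta=1$, the region above it split between the $CA$-family (bounded by the graph of $\mathcal U_\Phi$) and the Jouguet family (bounded by the graph of $\mathcal U_J$), and the covering concluded by the Picard--Lindel\"of theorem. The paper's own proof is terser, so your explicit gluing and monotonicity checks at $B$ and along the $C$-curve are elaborations of, not departures from, the paper's argument.
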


\begin{proof}
Our three families of characteristics map into three families of trajectories $\mathcal U(\zeta)$ when moved onto the $(\zeta, \mathcal U)$ plane.

The characteristics beginning from $TC$ map into trajectories below the trajectory $\mathcal U_C(\zeta)$ touching the upper boundary of $\overline\Upsilon_\Phi$ at the point $\zeta_B$, which corresponds to the characteristic $\varphi_C(x)$ that starts from $C$ and touches $\Phi$ at $x_B$. These characteristics go all the way from the left border $\zeta=0$ to the right border $\zeta=1$.

The characteristics beginning from $CA$ map into partial trajectories above $\mathcal U_C(\zeta)$ bounded by the right border $\zeta=1$ on the right and the graph of $\mathcal U_\Phi(\zeta)$ on the left.

The characteristics constructed via the Jouguet condition map into partial trajectories above $\mathcal U_C(\zeta)$ bounded by the left border $\zeta=0$ on the left and the graph of $\mathcal U_J(\zeta)$ on the right.

Together these three families of trajectories fill the whole area $\overline {\Upsilon}_\Phi$ due to the Picard--Lindel\"of theorem.
\end{proof}

\begin{theorem}
Let the condition ($\mathcal F$5) hold true only for $\zeta < \zeta_B = \zeta(\Phi(x_B), x_B)$. Then the $\mathcal U$-characteristics in $\angle_{t_{inj}}$ (continued from TA and constructed via the Jouguet condition \eqref{eq:Joguet_cond} from each point of the chemical shock front $(\Phi(x),x)$ with $x>x_B$) fill the whole area $\angle_{t_{inj}}$ without intersections and give us a pice-wise continuously differentiable solution $\mathcal U(\varphi, x)$.
\end{theorem}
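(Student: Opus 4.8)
The plan is to replay the argument of Theorem~\ref{thm1}, now bookkeeping the three families of characteristics isolated above and the fact that the Jouguet characteristics are emitted only from the portion $x>x_B$ of the front. First I would push all three families onto the $(\zeta,\mathcal U)$ plane through the mapping \eqref{eq:mapping_into_zeta_U}: the characteristics continued from $TC$ and from $CA$ become integral curves $\mathcal U(\zeta;\mathcal U_0)$ of \eqref{eq:U-char-of-c}, while the Jouguet characteristics become the curves $\mathcal U(\zeta;\zeta_0)$ with $\zeta_0<\zeta_B$ (so that $(\mathcal F5)$ holds at every $\zeta_0$ actually used). The Proposition immediately preceding this theorem already shows that together these curves fill $\overline{\Upsilon}_\Phi$ with no gaps, so what remains is to invert the $\psi$-coordinate back to $(\varphi,x)$ and check bijectivity, boundary correspondence, and piecewise $\mathcal C^1$ regularity.

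For the inversion I would use $\psi(\zeta;\mathcal U_0)$ and $\psi(\zeta;\zeta_0)$ together with the inverse formulas $x=e^{\psi}/\sqrt{a_\zeta^2(\zeta)+1}$ and $\varphi=t_{inj}+e^{\psi}a_\zeta(\zeta)/\sqrt{a_\zeta^2(\zeta)+1}$. The boundary accounting is identical to Theorem~\ref{thm1}: the edge $\mathcal U\equiv 1$ collapses to $T$, the edge $\zeta=1$ maps to $TA$, the edge $\zeta=0$ maps to the upper ray $\varphi=t_{inj}+a_\zeta(0)x$, and the upper boundary of $\overline{\Upsilon}_\Phi$ (the graph of $\mathcal U_J$ on $[0,\zeta_B]$ glued to the graph of $\mathcal U_\Phi$ on $[\zeta_B,1]$) maps onto the whole shock front $\Phi$, since on both pieces the $\psi$-value is $\psi_\Phi(\zeta)$. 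Monotonicity of $\psi$ in its parameter — Lemma~\ref{lemma:dpsi_dU0} for the $\mathcal U_0$-family and Lemma~\ref{lemma:dpsi_dzeta0} for the $\zeta_0$-family, whose hypotheses $\zeta<\zeta_0$ and $(\mathcal F5)$ at $\zeta_0$ are met precisely because $\zeta_0<\zeta_B$ — together with the smooth dependence of Proposition~\ref{prop:continuous_differential_by_parameter} then forces the interior to map one-to-one onto $\angle_{t_{inj}}$.

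The regularity on each family separately follows from the same Jacobian computation as in Theorem~\ref{thm1}: writing $G=e^{\psi}/\sqrt{a_\zeta^2(\zeta)+1}$ and $L=(t_{inj}+a_\zeta(\zeta)G,\,G)$, the determinant reduces to $a_{\zeta\zeta}(\zeta)G^2\psi_{\mathcal U_0}$ for the $\mathcal U_0$-family and to $a_{\zeta\zeta}(\zeta)G^2\psi_{\zeta_0}$ for the Jouguet family, both nonzero by (A3) together with Lemma~\ref{lemma:dpsi_dU0} and Lemma~\ref{lemma:dpsi_dzeta0} (the latter valid for $\zeta<\zeta_0<\zeta_B$). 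Hence $L$ is a local $\mathcal C^1$-diffeomorphism on each family and $\mathcal U=\mathcal U\circ L^{-1}$ is $\mathcal C^1$ there.

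The one genuinely new point, and the step I expect to be the main obstacle, is to glue the three families continuously across the separatrix emitted from $C$, which plays the role that the shared $A$-characteristic played in Theorem~\ref{thm1} via \eqref{eq:dPhi_dx_at_A}. The key observation is that the $CA$-characteristic from $C$ reaches $\Phi$ exactly at $B$ (Lemma~\ref{lemma_in_4_parts}, part~4) carrying the value $\mathcal U_\Phi(\zeta_B)=\mathcal U_J(\zeta_B)$ (Lemma~\ref{lemma-U-Phi-above-U-J}), which is precisely the value the Jouguet characteristic from $B$ carries. Since both curves solve the characteristic system \eqref{eq:U-char-phi}--\eqref{eq:U-char-U} and pass through $B$ with the same $\mathcal U$, uniqueness forces them to coincide for $x>x_B$; equivalently, in the $(\zeta,\mathcal U)$ plane the separatrix $\mathcal U_C(\zeta)$ and the limiting Jouguet curve $\mathcal U(\zeta;\zeta_B)$ are the same integral curve of \eqref{eq:U-char-of-c}, as they both pass through $(\zeta_B,\mathcal U_J(\zeta_B))$ where the Lipschitz right-hand side guarantees uniqueness. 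Consequently the $\psi$- and $\mathcal U$-values match across the junction along $\mathcal U_C$, so the reconstructed $\mathcal U$ is continuous on all of $\angle_{t_{inj}}$ and piecewise $\mathcal C^1$, with the admissible derivative breaks occurring only along this separatrix and along the image of $CA$. I would close by noting that this same argument confirms that $\mathcal U_\Phi$ is well defined and continuous and that the upper boundary of $\overline{\Upsilon}_\Phi$ glues continuously at $\zeta_B$, so no gap opens near $B$.
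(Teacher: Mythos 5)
Your proposal is correct and follows essentially the same route as the paper's proof: mapping the three families to the $(\zeta,\mathcal U)$ plane, invoking the filling proposition, inverting via $\psi$ with the same boundary accounting and Jacobian computation, and gluing across the $C$-separatrix at $B$. The paper phrases the gluing as the identity $\psi(\zeta;\zeta_B)=\psi(\zeta;\mathcal U_C(1))$ for $\zeta\in[0,\zeta_B]$, which rests on exactly the ODE-uniqueness observation you make explicit, so the two arguments coincide.
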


\begin{proof}
Going through the same scheme as Theorem~\ref{thm1},
we first map all characteristics into the $(\zeta, \mathcal U)$ plane using the mapping \eqref{eq:mapping_into_zeta_U}. In the $(\zeta, \mathcal U)$ plane the curves $\mathcal U(\zeta; \mathcal U_0)$ correspond to the characteristics continued from $TA$ (two families, one below $\mathcal U_C(\zeta)$ going across the whole area and one above it that is bounded by $\mathcal U_\Phi(\zeta)$), while the curves $\mathcal U(\zeta; \zeta_0)$, $\zeta_0\in[0,\zeta_B]$, correspond to the Jouguet characteristics.
Due to Proposition~\ref{prop:full_cover_of_zeta_U_plane} together these curves fill the whole area $\overline{\Upsilon}_\Phi$.

Now, we consider the solutions $\psi(\zeta; \mathcal U_0)$ and $\psi(\zeta; \zeta_0)$ and their inverse mapping
back into the $(\varphi, x)$ plane. Due to Proposition~\ref{prop:continuous_differential_by_parameter}, Lemma~\ref{lemma:dpsi_dzeta0} and Lemma~\ref{lemma:dpsi_dU0} they are continuous and monotone with respect to their parameters. Moreover, there is a continuous transition from $\psi(\zeta; \mathcal U_0)$ to $\psi(\zeta; \zeta_0)$ over the $\mathcal U_C(\zeta)$ curve. Indeed, $\psi(\zeta_B; \zeta_B) = \psi(\zeta_B; \mathcal U_C(1)) = \psi(\Phi(x_B), x_B)$. Therefore, $\psi(\zeta; \zeta_B) = \psi(\zeta; \mathcal U_C(1))$ for $\zeta\in[0,\zeta_B]$, and two families meet continuously on $\mathcal U_C(\zeta)$. On the upper boundary $\mathcal U_J(\zeta)$, $\zeta\in[0,\zeta_B]$, the functions $\psi(\zeta; \zeta_0)$ have boundary values $\psi(\zeta_0; \zeta_0) = \psi_\Phi(\zeta_0)$, so when we go back to $(\varphi, x)$ coordinates, this boundary will map back into the shock front $\Phi(x)$ for $x\geqslant x_B$. On the upper boundary $\mathcal U_\Phi(\zeta)$, $\zeta\in[\zeta_B,1]$, the functions $\psi(\zeta; \mathcal U_0)$ have values $\psi(\zeta; \mathcal U_0) = \psi_\Phi(\zeta)$ for $\zeta$ such that $\mathcal U(\zeta; \mathcal U_0) = \mathcal U_\Phi(\zeta)$ due to the definition of~$\mathcal U_\Phi(\zeta)$. On the right boundary of $\overline{\Upsilon}_\Phi$ we have $\psi(1; \mathcal U_0) = \psi_{TA}(\mathcal U_0)$, which will map back into $TA$. On the lower boundary we have $\mathcal U \equiv 1$, so when approaching that boundary, $\psi(1; \mathcal U_0) \to -\infty$ as $\mathcal U_0\to 1$, and the right-hand side of \eqref{eq:d_psi_d_zeta} is bounded, so $\psi(\zeta; \mathcal U_0) \to -\infty$ as $\mathcal U_0\to 1$. Therefore, the lower boundary will map back into the point $T$. Finally, the left boundary maps into the line $\varphi = t_{inj}+a_\zeta(0)x$, since $\zeta = 0$ on it, and as $\mathcal U$ approaches $\mathcal U_J(0)$, the values of $\psi(\zeta; \zeta_0)$ must go to infinity, since $\psi_\Phi(\zeta) \to \infty$ as $\zeta\to 0$ by the definition \eqref{eq:psi_Phi_definition}.
In summary, we demonstrated that the boundaries of $\overline{\Upsilon}_\Phi$ map back into the boundaries of $\angle_{t_{inj}}$, and since $\psi$ is continuous and monotone with respect to the parameters, the interior will also map one-to-one. Thus, the characteristics in the $(\varphi, x)$ plane cover the whole area $\angle_{t_{inj}}$.

The proof that $\mathcal U$, when mapped back into the $(\varphi, x)$ plane, gives us a piecewise continuously differentiable function can be repeated with no changes.
\end{proof}

\subsubsection{General case with finite number of sign changes}

The scheme of the previous case can be iterated any finite number of times to construct the solution with more changes of sign. We omit the step-by-step proof of such procedure, and only formulate the final result.

\begin{theorem}
\label{thm3}
Let the left-hand side of the condition ($\mathcal F$5) change sign only at a finite number of points. Then the $\mathcal U$-characteristics in $\angle_{t_{inj}}$ (continued from TA and constructed via the Jouguet condition \eqref{eq:Joguet_cond} from each point of the chemical shock front $(\Phi(x),x)$ such that ($\mathcal F$5) holds for $\zeta(\Phi(x),x)$) fill the whole area $\angle_{t_{inj}}$ without intersections and give us a pice-wise continuously differentiable solution $\mathcal U(\varphi, x)$.
\end{theorem}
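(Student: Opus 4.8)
The plan is to prove the statement by induction on the number $m$ of points at which the left-hand side of $(\mathcal F 5)$ changes sign, using Theorem~\ref{thm1} as the case $m=0$ and the construction of the previous subsection as the case $m=1$. Order the sign-change points by increasing $\zeta$ as $0<\zeta_1<\dots<\zeta_m<1$; since $(\mathcal F 5)$ holds for small $\zeta$ by the lemma above, the subintervals $[0,\zeta_1],[\zeta_1,\zeta_2],\dots,[\zeta_m,1]$ carry alternating signs of $(\mathcal F 5)$. I would process these subintervals from right to left, maintaining as an inductive invariant that the portion of the cone $\angle_{t_{inj}}$ corresponding to $\zeta\geqslant\zeta_i$ has already been filled one-to-one by characteristics, with a continuous effective upper boundary in the $(\zeta,\mathcal U)$ plane and known trajectory data crossing the line $\zeta=\zeta_i$. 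On a subinterval where $(\mathcal F 5)$ holds I would reproduce the full-Jouguet argument of Theorem~\ref{thm1} locally, so that the effective boundary there is the graph of $\mathcal U_J$; on a subinterval where $(\mathcal F 5)$ breaks I would reproduce the single-sign argument, feeding in the characteristics that cross the shock front from the already-filled region to the right, so that the effective boundary is a crossing locus $\mathcal U_\Phi$ lying strictly above $\mathcal U_J$ in the interior and meeting it at the endpoints.

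As before, the core of the argument lives in the $(\zeta,\mathcal U)$ plane. At each holding$\to$breaking seam the decisive local fact is the analogue of point $C$ in Lemma~\ref{lemma_in_4_parts}: there is a distinguished characteristic that touches the shock front tangentially \emph{exactly} at the seam, and parts 3--4 of that lemma, whose proof uses only the strict opposite signs of $(\mathcal F 5)$ on the two sides together with the second-derivative comparison of Lemma~\ref{lem:Joguet_char_above_shock}, transfer verbatim. At each breaking$\to$holding seam the crossing locus $\mathcal U_\Phi$ descends tangentially to meet $\mathcal U_J$, with equality of the two boundary values forced by the argument of Lemma~\ref{lemma-U-Phi-above-U-J}. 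Because every integral curve of \eqref{eq:U-char-of-c} is increasing in $\zeta$ and its right-hand side is locally Lipschitz away from $\mathcal U=1$, the Picard--Lindel\"of theorem and the argument of Proposition~\ref{prop:full_cover_of_zeta_U_plane} (including the exclusion of curves emanating from $\mathcal U=1$, which is unchanged) show that the finitely many families foliate the region $\overline\Upsilon$ bounded above by the concatenated effective boundary, with neither gaps nor crossings.

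I would then transfer the picture back to $(\varphi,x)$ by integrating \eqref{eq:d_psi_d_zeta} for $\psi$ along each family, with the initial data \eqref{eq:psi_Phi_definition} or \eqref{eq:definition_of_psi_TA}, and inverting $(\zeta,\psi)\mapsto(\varphi,x)$. The monotonicity provided by Lemma~\ref{lemma:dpsi_dzeta0} and Lemma~\ref{lemma:dpsi_dU0}, together with Proposition~\ref{prop:continuous_differential_by_parameter}, carries each family one-to-one, while seam-matching identities analogous to $\psi(\zeta_B;\zeta_B)=\psi(\zeta_B;\mathcal U_C(1))$ from the single-sign proof guarantee that the families of adjacent subintervals glue continuously across every seam, so that the images tile $\angle_{t_{inj}}$ without overlap and the boundaries of $\overline\Upsilon$ map to the boundaries of $\angle_{t_{inj}}$. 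Piecewise continuous differentiability then follows from the Jacobian computation $\det J_L=a_{\zeta\zeta}(\zeta)G^2\psi_{\zeta_0}>0$ of Theorem~\ref{thm1}, which is local on each family and insensitive to the number of seams.

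The step I expect to be the main obstacle is the bookkeeping at the seams, in particular the breaking$\to$holding seams, which do not occur in the $m\leqslant 1$ cases: one must verify that the distinguished touching characteristic at each transition lands precisely on the sign-change point (rather than merely somewhere inside the adjacent subinterval) and that the crossing family on each breaking subinterval genuinely originates from the previously constructed region, so that the induction closes without fresh boundary data. Both reduce to the sign analysis of $(\mathcal F 5)$ and the second-derivative comparisons already established; once they are in place, the global filling, injectivity, and smoothness follow from the monotonicity lemmas exactly as in the base cases.
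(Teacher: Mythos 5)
Your proposal is correct and takes exactly the approach the paper intends: for Theorem~\ref{thm3} the paper records no step-by-step proof at all, stating only that ``the scheme of the previous case can be iterated any finite number of times,'' and your right-to-left induction over the sign intervals --- Jouguet filling where ($\mathcal F$5) holds, crossing characteristics fed from the previously constructed region where it breaks, seam matching via the analogues of Lemma~\ref{lemma_in_4_parts} and Lemma~\ref{lemma-U-Phi-above-U-J}, and the $\psi$-monotonicity transfer back to the $(\varphi,x)$ plane --- is precisely that iteration made explicit. Your seam bookkeeping (in particular at the breaking-to-holding transitions, which indeed do not arise for $m\leqslant 1$) supplies more detail than the paper itself provides, and it is consistent with the mechanisms the paper establishes in the one-sign-change case.
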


The general case with infinite sign changes isn't fully clear right now. The working conjecture is that Theorem \ref{thm3} should hold in that case as well, but we haven't formally worked through the case when the points of sign change have a concentration point yet, so we leave it an open problem for now. 

\subsection{Solution for $\mathcal U$ everywhere else}
\subsubsection{Solution for $\mathcal U$ above the cone}
Since the whole cone is covered with $\mathcal U$-characteristics, we have a value for $\mathcal U$ at every point of the upper boundary $\varphi = t_{inj} + a_\zeta(0)x$. We can construct straight line $\mathcal U$-characteristics going above the cone, and they will cover the whole area. Moreover, the values of $\mathcal U$ on this boundary are mapped from $\{(\zeta, \mathcal U): \zeta=0, \mathcal U\in[1, \mathcal U_J(0)]\}$ with monotone values of $\psi$, therefore $\mathcal U(t_{inj} + a_\zeta(0)x, x)$ is monotone with respect to $x$, so the straight line characteristics, having the inclines $\mathcal F_{\mathcal U}(\mathcal U, 0)$, fan out and do not intersect.

\subsubsection{Solution for $\mathcal U$ below the chemical shock front}
The solution in the cone gives us values of $\mathcal U^+$ at all points immediately above the chemical shock front $\Phi$. The Rankine--Hugoniot condition \eqref{eq:RH-2} and the entropy condition (W4) allow us to calculate the value $\mathcal U^-$ below the shock. At the part of $OA$ the solution must coincide with the Riemann problem solution, which means that $\mathcal U^+_{TA}$ satisfies $\mathcal F_{\mathcal U}(\mathcal U^+_{TA}, 1) = a(1)$ and corresponds to the case $u_2^-=u_1^-$ in the original coordinates (see Sect.~\ref{sec:sec2-nullclines} for the definition of $u_{1,2}^\pm$), while $\mathcal U^-_{TA}$ corresponds to $u^+_1$ (due to known Riemann problem solution). This means that $\mathcal U^-$ on $\Phi$ must always correspond to $u_1^+$ in original coordinates, since a jump from $u_2^+$ to $u_1^+$ is inadmissible due to Oleinik's E-condition \eqref{Oleinik_admissibility}. Meanwhile, $\mathcal F_{\mathcal U}(\mathcal U^+, \zeta^+) \leqslant a(\zeta^+)/\zeta^+$ (due to the generalized variation of Lemma~\ref{lemma-U-Phi-above-U-J} for the case of Theorem~\ref{thm3}), so $\mathcal U^+$ always corresponds to $u_1^-$. The $c$-shock from $u_1^-$ to $u_1^+$ is admissible due to Proposition~\ref{prop:c-shock-admissibility}, so the solution we constructed satisfies (W4).

We can then construct the straight line characteristics using these values to cover the whole area below the shock. Note, though, that these characteristics are not proved to fan out (even though in most example cases they do), and if they collide, an additional $\mathcal U$-shocks could form. But, since this is an area with $\zeta\equiv 0$, the problem is, practically speaking, one-dimensional there, so these additional shocks can be constructed without breaking the entropy condition due to classical one-dimentional Kru\v{z}kov's theorem.

\subsubsection{Uniqueness of the constructed solution}
In summary, we constructed a piece-wise smooth solution with finite number of jumps, satisfying between them the classical differential equations \eqref{eq:U-lagr-eqn}, \eqref{eq:c-lagr-eqn} and thus \eqref{eq:main_system_chem_flood} in the original coordinates, so it satisfies (W1) and (W3). For the solution $\zeta$ we have explicit formulas, so we can trivially check that (W2) holds. Each constructed shock satisfies an entropy condition necessary for (W4) to hold. Therefore, we can use Theorem~\ref{uniqueness-theorem} and obtain uniqueness for our solution in the class of W-solutions.

\section{Examples}
\label{sec:Examples}

\subsection{Full Jouguet case}
\begin{figure}[H]
    \begin{center}
    \includegraphics[width=0.66\linewidth]{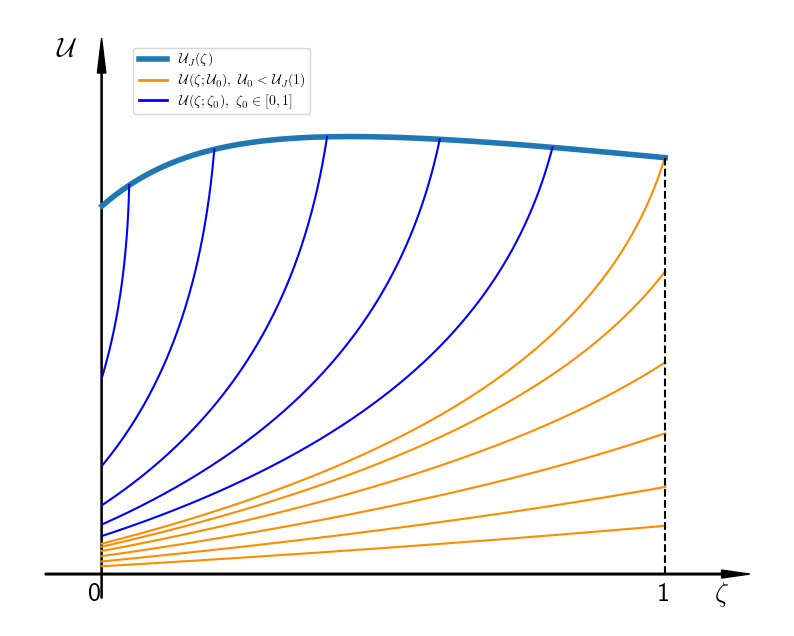}
    \end{center}
    \caption{Trajectories of the $\mathcal U$-characteristics in the $(\zeta, \mathcal U)$ plane. Full Jouguet case.}
    \label{fig:full_Jouguet_U_zeta}
    \begin{center}
    \includegraphics[width=0.66\linewidth]{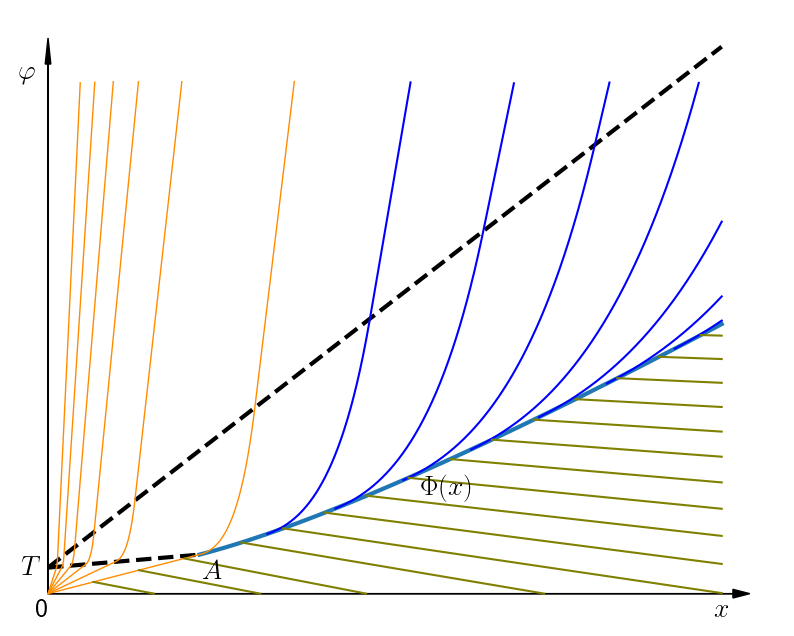}
    \end{center}
    \caption{Characteristics of the $\mathcal U$-solution in the $(\varphi, x)$ plane. Full Jouguet case. Colors match Figure~\ref{fig:full_Jouguet_U_zeta}.}
    \label{fig:full_Joguet_phi_x}
\end{figure}

\subsection{Partial Jouguet with one change of sign}
\begin{figure}[H]
    \begin{center}
    \includegraphics[width=0.66\linewidth]{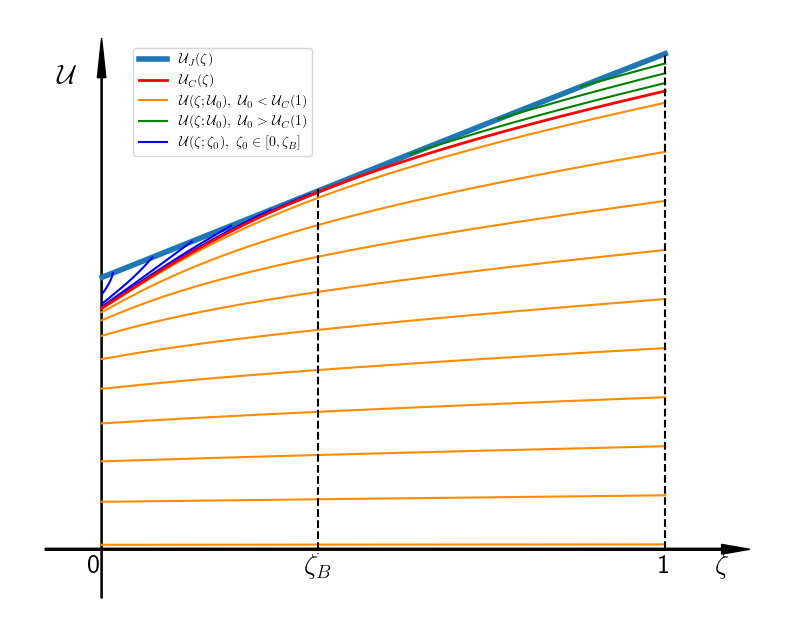}
    \end{center}
    \caption{Trajectories of the $\mathcal U$-characteristics in the $(\zeta, \mathcal U)$ plane. One change in the sign of the left-hand side of $(\mathcal F5)$.}
    \label{fig:one_sign_change_U_zeta}
    \begin{center}
    \includegraphics[width=0.66\linewidth]{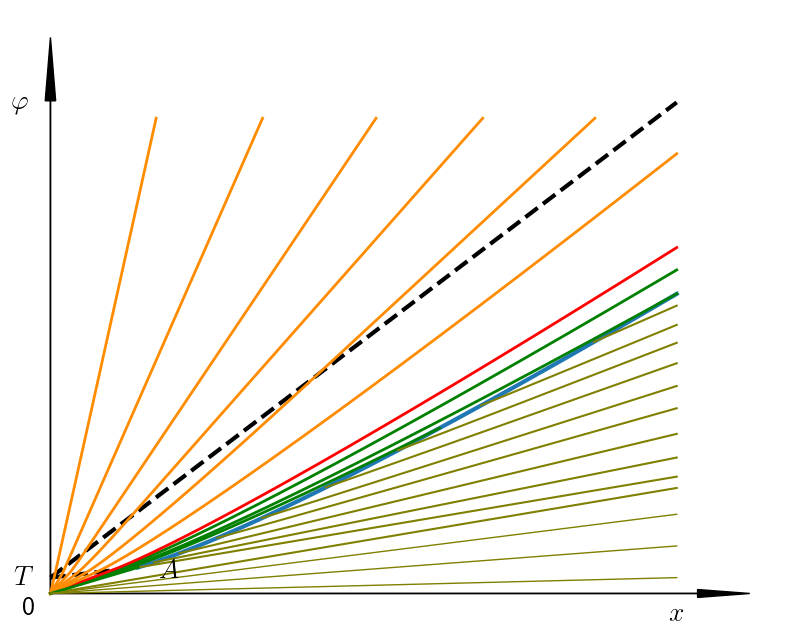}
    \end{center}
    \caption{Characteristics of the $\mathcal U$-solution in the $(\varphi, x)$ plane. One change in the sign of the left-hand side of $(\mathcal F5)$. The plot is framed to show the close neighborhood of the point $A$ and characteristics intersecting $\Phi$ near it. At a certain point $x_B$ the red characteristic touches the shock $\Phi$, and after that, Jouguet characteristics fill the rest of the area. Colors match Figure~\ref{fig:one_sign_change_U_zeta}.}
    \label{fig:one_sign_change_phi_x}
\end{figure}

\section*{Acknowledgements}

The authors thank Pavel Bedrikovetsky for lectures on systems of hyperbolic conservation laws.
Research is supported by the Russian Science Foundation (RSF) grant 19-71-30002.
\bigskip
\bigskip

\begin{enbibliography}{99}
\addcontentsline{toc}{section}{References}

\bibitem{Pires2020}
de O Apolin\'{a}rio, F., de Paula, A.~S. and Pires, A.~P., 2020. Injection of water slug containing two polymers in porous media: Analytical solution for two-phase flow accounting for adsorption effects. Journal of Petroleum Science and Engineering, 188, p.~106927.

\bibitem{Pires2021}
Apolin\'{a}rio, F.O. and Pires, A.P., 2021. Oil displacement by multicomponent slug injection: An analytical solution for Langmuir adsorption isotherm. Journal of Petroleum Science and Engineering, 197, p.~107939.

\bibitem{Bahetal}
Bakharev, F., Enin, A., Petrova, Y. and Rastegaev, N., 2023. Impact of dissipation ratio on vanishing viscosity solutions of the Riemann problem for chemical flooding model. Journal of Hyperbolic Differential Equations, 20(02), pp.~407-432.

\bibitem{Tapering}
Bakharev, F., Enin, A., Kalinin, K., Petrova, Y., Rastegaev, N. and Tikhomirov, S., 2023. Optimal polymer slugs injection profiles. Journal of Computational and Applied Mathematics, 425, p.~115042.

\bibitem{BL} Buckley, S.~E. and Leverett, M., 1942. Mechanism of fluid displacement in sands. Transactions of the AIME, 146(01), pp.~107-116.

\bibitem{Castaneda}
Casta\~{n}eda, P., 2016. Dogma: S-shaped. The Mathematical Intelligencer, 38, pp.~10-13.

\bibitem{Courant}
Courant, R., 1944. Supersonic Flow and Shock Waves: A Manual on the Mathematical Theory of Non-linear Wave Motion (No. 62). Courant Institute of Mathematical Sciences, New York University.

\bibitem{Dafermos}
Dafermos, C.~M., 2000. Hyperbolic Conservation Laws in Continuum
Physics. Springer Verlag, Berlin.

\bibitem{Gelfand}
Gelfand, I.~M., 1959. Some problems in the theory of quasilinear equations. Uspekhi Matematicheskikh Nauk, 14(2), pp.~87-158 (in Russian). English translation in Transactions of the American Mathematical Society, 29(2), 1963, pp.~295-381.

\bibitem{JnW}
Johansen, T. and Winther, R., 1988. The solution of the Riemann problem for a hyperbolic system of conservation laws modeling polymer flooding. SIAM journal on mathematical analysis, 19(3), pp.~541-566.

\bibitem{Kruzhkov}
Kru\v{z}kov, S.N., 1970. First order quasilinear equations in several independent variables. Mathematics of the USSR-Sbornik, 10(2), pp.~217-243.

\bibitem{Oleinik}
Oleinik, O.~A., 1957. Discontinuous solutions of non-linear differential equations. Uspekhi Matematicheskikh Nauk, 12(3)(75), pp.~3-73 (in Russian). English translation in American Mathematical Society Translations, 26(2), 1963, pp.~95-172.

\bibitem{PiBeSh06}
Pires, A.P., Bedrikovetsky, P.G. and Shapiro, A.A., 2006. A splitting technique for analytical modelling of two-phase multicomponent flow in porous media. Journal of Petroleum Science and Engineering, 51(1-2), pp.~54-67.

\bibitem{RastS-Shaped}
Rastegaev, N., 2023. On the sufficient conditions for the S-shaped Buckley-Leverett function. arXiv preprint arXiv:2303.16803.

\bibitem{MR2024}
Rastegaev, N. and Matveenko, S., 2024. Kru\v{z}kov-type uniqueness theorem for the chemical flood conservation law system with local vanishing viscosity admissibility. Journal of Hyperbolic Differential Equations, 21(04), pp.~1003-1043.

\bibitem{RheeAmundson}
Rhee, H.~K. and Amundson, N.~R., 1970. On the theory of multicomponent chromatography. Philosophical Transactions of the Royal Society of London. Series A, Mathematical and Physical Sciences, 267(1182), pp.~419-455.

\bibitem{Serre1}
Serre, D. Systems of Conservation Laws 1: Hyperbolicity, entropies, shock waves. Cambridge University Press, 1999.

\bibitem{Shen}
Shen, W., 2017. On the uniqueness of vanishing viscosity solutions for Riemann problems for polymer flooding. Nonlinear Differential Equations and Applications NoDEA, 24, pp.~1-25.

\bibitem{Wa87}
Wagner, D.H., 1987. Equivalence of the Euler and Lagrangian equations of gas dynamics for weak solutions. Journal of differential equations, 68(1), pp.~118-136. 

\end{enbibliography}

\end{document}